\documentclass[11pt]{article}

\usepackage{amsmath, amssymb, amsthm}
\usepackage[utf8]{inputenc}
\usepackage[T1]{fontenc}
\usepackage{geometry}
\usepackage{hyperref}
\usepackage{mathtools}

\newtheorem{theorem}{Theorem}[section]
\newtheorem{lemma}[theorem]{Lemma}
\newtheorem{proposition}[theorem]{Proposition}
\newtheorem{corollary}[theorem]{Corollary}
\theoremstyle{definition}
\newtheorem{definition}[theorem]{Definition}
\newtheorem{example}[theorem]{Example}
\theoremstyle{remark}
\newtheorem{remark}[theorem]{Remark}

\newcommand{\C}{\mathbb{C}}
\newcommand{\R}{\mathbb{R}}
\newcommand{\Z}{\mathbb{Z}}
\newcommand{\D}{\mathbb{D}}
\newcommand{\Acal}{\mathcal{A}}
\newcommand{\Bcal}{\mathcal{B}}
\newcommand{\Ecal}{\mathcal{E}}
\newcommand{\Mcal}{\mathcal{M}}

\newcommand{\fa}{\mathfrak{a}}
\newcommand{\fb}{\mathfrak{b}}
\newcommand{\ff}{\mathfrak{f}}
\newcommand{\Mob}{\mathrm{M\ddot{o}b}(\D)}
\newcommand{\hyparea}{\omega_{\mathrm{hyp}}}
\newcommand{\Sol}{\operatorname{Sol}}

\title{Moduli of the Sourceless Framed Beltrami--Vekua Normal Form}
\author{Daniel Alay\'on-Solarz\thanks{\texttt{danieldaniel@gmail.com}}}
\date{August 2026}

\begin{document}

\maketitle

\begin{flushright}
\begin{minipage}{0.6\textwidth}
\begin{flushright}
\small\itshape
We found that objectivity means invariance with respect to the
group of automorphisms.
\par\vspace{2pt}
{\normalfont\small --- Hermann Weyl, \emph{Symmetry} (1952), p.~132}
\end{flushright}
\end{minipage}
\end{flushright}
\vspace{6pt}

\begin{abstract}
We study the moduli of sourceless framed Beltrami--Vekua equations under recombinations of the unknown, scalings, and orientation-preserving changes of variables. On every bounded simply connected domain, such an equation reduces to $w_{\bar z}=B\bar w$ on the unit disk, with residual symmetries given exactly by zero-free holomorphic gauges and M\"obius transformations. When $B$ is zero-free, the equation is completely classified by two data modulo M\"obius: the hyperbolic mass density $\vartheta=\tfrac14(1-|z|^2)^2|B|^2$ and the phase-curvature current $K=\Delta\arg B$, whose hyperbolic density at $C^2$ regularity is $\kappa=\Delta_{\mathrm{hyp}}\arg B$. We determine the exact range of these invariants: every positive H\"older density and every phase current arising as the Laplacian of a H\"older phase occur. For fields with zeros, the classification extends to the phase-integrable sector through the triple $(\vartheta,d\eta,d{\star}\eta)$, where $\eta=\operatorname{Im}(dB/B)$. On the tame sector, $d\eta$ is the atomic charge measure, while $d{\star}\eta$ carries the remaining phase curvature. Thus the pseudo-analytic mass and charge are numerical projections of a larger infinite-dimensional moduli space. Explicit equal-mass, equal-charge, inequivalent equations are exhibited.
\end{abstract}

\section{Introduction}\label{sec:intro}

A first-order real planar elliptic system normalizes, by pointwise
algebra alone, to a \emph{framed Beltrami--Vekua equation}
\begin{equation}\label{eq:framed}
\Phi\,(w_{\bar z} - \mu\, w_z) \;+\; \Psi\,(\overline{w_z} - \mu\,\overline{w_{\bar z}}) \;+\; \fa\, w \;+\; \fb\, \bar w \;=\; \ff,
\qquad |\mu| < 1, \quad |\Phi| > |\Psi|,
\end{equation}
the terminal class closed under the recombinations of the unknown
$w = \varphi w' + \psi\bar w'$, the scalings of the equation, and
orientation-preserving changes of variables \cite{framed, absorption}. On the
subclass whose frame ratio carries a derivative --- the standing
hypothesis of the program, weighed below --- two invariants are known
on their respective domains. The \emph{pseudo-analytic mass}
$\Mcal \in [0, \infty]$ integrates the squared modulus of the
numerator field $N$ against the invariant weight built from the frame
determinant and the conformal factor of $\mu$ \cite{mass, framed}. On
a $C^1$ frame representative,
$N = \Phi\fb - \Psi\fa - W_L(\Phi, \Psi)$, $L := \bar \partial - \mu \partial$; for a projectively $C^1$
frame it is read by the de-scaling convention fixed below. When the
zero set of $N$ is charge-admissible --- in particular, when it is
compactly contained in the domain --- the \emph{pseudo-analytic
charge} $n \in \Z$ winds its argument along an enclosing curve
\cite{charge}. On this common domain mass and charge are independent
--- every pair in $(0, \infty) \times \Z$ is realized \cite{charge}
--- and the question this paper answers is the natural next one:
\emph{is the pair $(\Mcal, n)$ complete?} That is, restricting to the
sourceless case $\ff = 0$ --- a condition preserved by all three
actions, so that the sourceless equations form an invariant subclass
--- do equal mass and equal charge force equivalence?

The answer is no, and by a wide margin; but the computation that
produces the answer produces more, namely, on simply connected
domains, the full moduli of the vortex-free sourceless class ---
completeness and realization both --- together with complete orbit
invariants across the vortices, on the phase-integrable
sector. The moduli turn out to
be a \emph{pair of scalar data} on the unit disk modulo the
three-dimensional M\"obius group --- an infinite-dimensional space ---
on the vortex-free sector, the second datum a current at the bare
regularity of the class, enlarged across the vortices by a third,
the charge current, purely atomic on the tame sector. The mass is the
zeroth moment of the first field; whenever the numerical charge is
defined, it is the quantized total flux of the charge current, a
functional of the third datum alone and identically zero on the
vortex-free sector. The pair
$(\Mcal, n)$ is not complete for the same reason a function is not
determined by its integral.

\medskip\noindent\textbf{The minimal reduction.}
The reduction chain is assembled from the companion papers, run here
in the order that spends least regularity \cite[Rem.~10.11]{framed}. A
sourceless framed equation on a bounded simply connected domain is
carried, by one substitution and one scaling, onto the trivial-frame
slice --- the Beltrami--Vekua equation of \cite{mass} --- with the
$L$-Wronskian of the frame absorbed into the coefficient of $\bar w$
(the straightening of \cite[\S 9]{framed}); by the uniformization of
its Beltrami coefficient and a Riemann map, onto the slice $\mu = 0$
over the unit disk; and by one multiplicative gauge, solving
$\bar\partial s = -\Acal$ by the Pompeiu integral, onto the
\emph{minimal form}
\begin{equation}\label{eq:minimal}
w_{\bar z} \;=\; B\,\bar w \qquad \text{on } \D .
\end{equation}
Every datum of the original equation has been consumed except one
complex-valued function: the coefficient $B$, which is the numerator
field $N$ of the representative \eqref{eq:minimal}. The chain is the
composite already implicit in \cite{framed, charge}; what is new is
the bookkeeping of what survives it.

\medskip\noindent\textbf{The residual symmetries, and a phase that is
pure gauge.} Two equations are equivalent if and only if their
minimal forms are related by the symmetries that preserve the
minimal slice, and these form a transparent group: the gauges
preserving $\Acal = 0$ are $w = \varphi w'$ with $\varphi$
\emph{holomorphic} and zero-free, acting by
$B \mapsto (\bar\varphi/\varphi)\,B$; and the changes of variables
preserving $\mu = 0$ are conformal automorphisms $F \in \Mob$, acting
by $B \mapsto \overline{F'}\,(B \circ F)$
(Proposition~\ref{prop:residual}). One small observation now does
disproportionate work: $F'$ is holomorphic and zero-free on the simply
connected $\D$, so it has a holomorphic square root, and the phase of
the conformal weight $\overline{F'}$ is itself a residual gauge. The
orbit relation therefore splits into a modulus law and a phase law,
\begin{equation}\label{eq:orbit-split}
|B'| \;=\; |F'|\;|B \circ F|, \qquad
\arg B' \;=\; \arg B \circ F \;+\; h, \quad h \ \text{an arbitrary harmonic function},
\end{equation}
the harmonic functions being exactly the phases $-2\arg\varphi$ of the
zero-free holomorphic gauges.

\medskip\noindent\textbf{The moduli pair.} Each half of
\eqref{eq:orbit-split} is neutralized by the hyperbolic geometry of
$\D$, through the identity $1 - |F(z)|^2 = |F'(z)|\,(1 - |z|^2)$ valid
for every $F \in \Mob$. The modulus law is absorbed by the conformal
factor: the \emph{hyperbolic mass density}
\[
\vartheta \;:=\; \tfrac14\,\bigl(1 - |z|^2\bigr)^2\,|B|^2
\]
is gauge-invariant and satisfies the exact equivariance
$\vartheta' = \vartheta \circ F$. The phase law is absorbed by the
Laplacian: harmonic additions die under $\Delta$, and conformal
covariance gives the \emph{phase-curvature current}
\[
K \;:=\; \Delta\arg B\;dx\,dy,
\]
well-defined on the vortex-free sector at the bare regularity of the
class, gauge-invariant, and transformed by pullback, $K' = F^*K$.
When the phase admits a $C^2$ branch, this current has hyperbolic
density
\[
\kappa \;:=\; \tfrac14\,\bigl(1 - |z|^2\bigr)^2\,\Delta\arg B,
\qquad K = \kappa\,\hyparea,
\]
and $\kappa' = \kappa \circ F$ (Theorem~\ref{thm:equivariance} and
Remark~\ref{rem:distributional}). The modulus field and phase current
are the same kind of covariant object in different clothing:
$\vartheta$ is the mass 2-form $\Theta$ of \cite{mass, framed} read
against hyperbolic area, while $K$ records the phase that the mass
forgets.

\medskip\noindent\textbf{The main theorem.} On the sector where $B$ is
zero-free --- equivalently, where the equation is everywhere
irreducibly pseudo-analytic in the sense of \cite[Rem.~5.4]{charge}
--- the pair is complete: two sourceless equations are equivalent if
and only if their pairs $(\vartheta, K)$ agree up to a common
M\"obius transformation of $\D$ --- the phase read through the
current $K$, whose hyperbolic density on the $C^2$ sector is $\kappa$
(Theorem~\ref{thm:completeness}). The proof is one page once the
reduction is in place: matching $\vartheta$ matches $|B|$; matching
$K$ makes the phase difference distributionally harmonic; and a
harmonic phase difference is a gauge. The moduli of the sourceless vortex-free class
are therefore the pairs $(\vartheta, K)$ modulo the three-parameter group
$\Mob$ --- an infinite-dimensional space, of which the invariants
$(\Mcal, n)$ retain two real numbers.

\medskip\noindent\textbf{Mass and charge as projections of the moduli.}
The relation of the moduli data to the mass and to the numerical charge
on its charge-admissible domain is exact. The mass is the zeroth
hyperbolic moment,
\[
\Mcal \;=\; \int_\D \vartheta\; \hyparea, \qquad \hyparea := \frac{4\, dx\, dy}{(1 - |z|^2)^2},
\]
and more generally every functional $\int_\D \Phi(\vartheta)\,\hyparea$
is an invariant, of which the mass is the case $\Phi = \mathrm{id}$
(Section~\ref{sec:massandcharge}). The charge lives in the vortex
sector, where $\arg B$ ceases to be single-valued and survives as the
\emph{phase form} $\eta = d\arg B$. The derivative of the phase then
splits into two currents: the exterior derivative $d\eta$ is the
charge current --- on the tame sector it is purely atomic,
$2\pi n_p$ at each vortex --- while the co-derivative
$d{\star}\eta$ extends the phase curvature across the vortices. On the tame sector the latter has no
atomic part; on the general phase-integrable sector both objects are
currents and may have singular components (Section~\ref{sec:vortices}). Incompleteness is then
exhibited twice
over, at equal mass and equal charge
(Section~\ref{sec:incompleteness}): by two vortex-free equations with
the same $(\Mcal, n)$ and identically vanishing phase curvature,
separated by the second moment of $\vartheta$ --- the fields $B = t$
and $B = \sqrt{2}\,t\,(1 - |z|^2)^{1/2}$ have equal mass $\pi t^2$ and
charge $0$, but second moments $\pi t^4/12$ and $\pi t^4/5$ --- and
by two equations with identical $\vartheta$, mass $\pi$, charge $0$,
separated by the phase curvature: $B = 1$ has $\kappa \equiv 0$, while
$B = e^{ix^2}$ has $\kappa = \tfrac12(1 - |z|^2)^2 > 0$, and the exact
law $\kappa' = \kappa \circ F$ can never carry a nonvanishing field to
zero.

\medskip\noindent\textbf{Across the vortices.} The vortex-free
restriction of the main theorem is not final. On the sector where the
phase form remains locally integrable --- in particular at every tame
vortex $B = (z - p)^{n_p} g$ with $g$ zero-free --- completeness
persists, with the pair enlarged to the triple; phase-integrability,
as Definition~\ref{def:phase-form} fixes it, carries the standing
hypotheses that the zero set be closed, Lebesgue-null, and of
connected complement, the last used essentially in the sufficiency
proof, and its relaxation is the open problem~(3) of
Section~\ref{sec:discussion}. The enlarged data are
$(\vartheta, d\eta, d{\star}\eta)$
(Theorem~\ref{thm:tame-complete}); and the charge current is
irreducibly needed there, the fields $z$ and $\bar z$ sharing
$\vartheta$ and curvature while carrying opposite charge. The frontier of the order-zero $L^1$ current classification is the
integrability of the phase form: the oscillation
example of \cite[Rem.~4.8]{charge} --- a vortex of local charge zero
whose phase oscillates without limit --- has
$|\eta| \sim |z - p|^{-2}$, hence no locally integrable phase form
and no order-zero currents, and beyond that
frontier the classification of the vortex germs remains open
(Section~\ref{sec:vortices}). Terminology is fixed accordingly and
held rigid throughout: the \emph{vortex-free moduli} are the pair
$(\vartheta, K)$; the \emph{phase-integrable moduli} are the triple
$(\vartheta,\, d\eta,\, d{\star}\eta)$, the charge carried by $d\eta$
alone.

\medskip\noindent\textbf{The class, and Bojarski's shadow.} A scope
distinction runs through the program and should be visible here. The
framed normalization itself is universal: pointwise algebra, taking no
derivative of any datum, available at bare continuity. The invariants
are not so free. The numerator field differentiates a de-scaled frame through
the $L$-Wronskian, and with it everything in this paper --- mass,
charge, moduli --- lives on the strictly smaller class whose
projective frame ratio $\nu = \Psi/\Phi$ carries one derivative, the
common factor being scaling data. And the demand grades by invariant: the mass
survives that derivative being merely weak --- frame in
$W^{1,2}_{\mathrm{loc}} \cap L^{\infty}_{\mathrm{loc}}$, the
measurable frontier of \cite{framed} --- while the charge is
established only for a classical derivative, the frame ratio $C^1$
\cite{charge}, its measurable extension being precisely the open
conjecture of \cite[\S 7]{charge}; the moduli of this paper run in the
H\"older class of Definition~\ref{def:class}, and
Section~\ref{sec:discussion} records what descends. The comparison
is with Bojarski \cite{bojarski}: his reduction of the rough
system to $w_{\bar z} = q_1 w_z + q_2\,\overline{w_z}$ differentiates
no coefficient and runs at bare measurable principal part, while the
frame is principal data and the framed theory demands a derivative of
it. The demand is the exact price of the zeroth-order invariants,
which a derivative-free reduction cannot see; and the first
expenditure of the ledger below is this same derivative, already
present in $N$, not a new cost of the chain.

\medskip\noindent\textbf{The regularity ledger.} The chain spends: one
derivative on the projective frame ratio (the straightening, as in
\cite{framed}); one
Beltrami solve (the uniformization --- locally H\"older $\mu$ buys the
$C^{1,\alpha}_{\mathrm{loc}}$ chart that keeps the category smooth);
and one $\bar\partial$-solve (the gauge removing $\Acal$ --- locally
the Pompeiu integral, globally an elliptic solve). We therefore work
in the locally H\"older category throughout ---
Definition~\ref{def:class} fixes the classes, local in everything but
the ellipticity bound on $\mu$ --- and record at the end what survives
below it, where the phase half of the story meets the open trace
problems of \cite[\S 7]{charge}.

\medskip\noindent\textbf{Outline.} Section~\ref{sec:reduction} runs
the minimal reduction and computes the residual groupoid.
Section~\ref{sec:pair} defines the current-valued pair
$(\vartheta, K)$, with $\kappa$ as the classical hyperbolic density of
$K$ on the $C^2$ sector, and proves the equivariance. Section~\ref{sec:completeness} proves the
completeness theorem on the vortex-free sector and discusses
realization. Section~\ref{sec:massandcharge} identifies mass and
charge as projections of the moduli data.
Section~\ref{sec:incompleteness} exhibits the incompleteness of
$(\Mcal, n)$. Section~\ref{sec:vortices} splits the derivative of
the phase into its two currents and extends the completeness theorem
across the vortices. Section~\ref{sec:solutions} reads the
classification at the level of solutions: distinct minimal forms
share no nonzero solution, the sheaf of local solutions determines
the equation unconditionally, the pair is computable from any one
nonzero solution, and equivalence is the existence of a
multiplier--composition isomorphism of the solution sheaves.
Section~\ref{sec:discussion} closes with the
physical reading of the moduli --- energy profile, flat
Aharonov--Bohm connection, co-curvature --- the locality ledger of
the invariants, and the open problems
on multiply connected domains and at measurable regularity; a coda,
Section~\ref{sec:coda}, states what the program has computed.

\section{The minimal reduction}\label{sec:reduction}

This section assembles, from \cite{mass, framed}, the reduction of a
sourceless framed equation to the minimal form
\eqref{eq:minimal}, and computes the residual symmetries exactly. The
name is the count: the reduction consumes every datum of the original
equation except the single function $B$, and nothing smaller survives
it. The
reduction itself is the composite of results proved in the companion
papers; the content here is the precise statement of what preserves
the minimal slice, on which everything that follows rests.

\begin{definition}[The class]\label{def:class}
Fix $\alpha \in (0, 1)$. Throughout Sections
\ref{sec:reduction}--\ref{sec:incompleteness}, a \emph{sourceless
framed equation} is an equation \eqref{eq:framed} with $\ff = 0$ on a
bounded simply connected domain $\Omega \subset \C$, with
\[
\Phi,\Psi \in C^{\alpha}_{\mathrm{loc}}(\Omega), \qquad
|\Phi|>|\Psi|, \qquad
\nu:=\frac{\Psi}{\Phi}\in C^{1,\alpha}_{\mathrm{loc}}(\Omega),
\]
$\mu \in C^{\alpha}_{\mathrm{loc}}(\Omega)$ with
$\sup_\Omega |\mu| < 1$, and
$\fa, \fb \in C^{\alpha}_{\mathrm{loc}}(\Omega)$. Thus the derivative
hypothesis is projective: it is the frame ratio $\nu$, rather than an
arbitrary common scaling factor of $(\Phi,\Psi)$, that carries one
derivative. Whenever a formula differentiates the frame, it is read
on the canonical de-scaled representative $\Phi^{-1}\Ecal$, whose
frame is $(1,\nu)$, or on any equivalent $C^{1,\alpha}$ de-scaling.
Write $N_0$ for the numerator of the canonical representative and
define the numerator of the displayed representative by
$N:=\Phi^2N_0$. On a displayed $C^{1,\alpha}$ frame this agrees with
$N=\Phi\fb-\Psi\fa-W_L(\Phi,\Psi)$, and under a scaling by $c$ it
obeys $N\mapsto c^2N$. Thus the definition is independent of the
de-scaling and has exactly the scaling law used in
\cite{framed, charge}.

The admissible morphisms are those of \cite{framed, charge}:
substitutions $w = \varphi w' + \psi\bar w'$ with
$\varphi, \psi \in C^{1,\alpha}_{\mathrm{loc}}$,
$|\varphi| > |\psi|$ pointwise; scalings $\Ecal \mapsto c\,\Ecal$ with
$c \in C^{\alpha}_{\mathrm{loc}}(\Omega; \C^*)$; and
orientation-preserving $C^{1,\alpha}_{\mathrm{loc}}$ diffeomorphisms.
Two equations are \emph{equivalent} if a finite composite of
admissible morphisms carries one to the other. The projective
regularity is preserved by all three actions: scalings leave $\nu$
unchanged, substitutions act on it through $C^{1,\alpha}$ data, and
changes of variables compose it with a $C^{1,\alpha}$ diffeomorphism.
The regularity is local and the ellipticity bound on $\mu$ alone is
global; as Theorem~\ref{thm:reduction} and
Proposition~\ref{prop:realization} will show, the class is closed under
its own minimal reduction --- the minimal forms it produces are again
members --- the boundary of $\Omega$ playing no role in any
construction of this paper. The calligraphic letter is deliberate:
the roman $E$ of \cite{framed} denotes the complex residual of the
underlying real system --- the framed equation arising as the
recombination $E + \mu\bar E$ --- and is not reused here. All three
actions are orientation-preserving; the equivalence of this paper is
the orientation-preserving groupoid throughout, and the effect of
adjoining orientation-reversing morphisms --- at the minimal slice,
a single reflection --- is not pursued here.
\end{definition}

The H\"older exponent is spent in exactly two places --- the
uniformizing chart and the $\Acal$-gauge below --- and nowhere else;
the projective frame algebra of \cite{framed} runs at
$C^{1,\alpha}$/$C^{\alpha}$ as above.

\begin{theorem}[Minimal reduction]\label{thm:reduction}
Every sourceless framed equation per Definition~\ref{def:class} is
equivalent to a minimal form
\[
w_{\bar z} \;=\; B\,\bar w \qquad \text{on } \D,
\qquad B \in C^{\alpha}_{\mathrm{loc}}(\D),
\]
whose coefficient is, up to a sign, the numerator field of the
representative: the equation $w_{\bar z} + \fa w + \fb \bar w = 0$
with trivial frame, $\mu = 0$, $\fa = 0$, $\fb = -B$ has
$N = \fb = -B$, and Remark~\ref{rem:sign} discharges the sign. We write the minimal form as
\eqref{eq:minimal} and its field as $B$, so that
$Z(B)$ is the vortex set and $|B|^2\,dx\,dy$ the mass density of the
representative.
\end{theorem}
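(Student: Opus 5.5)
The reduction runs as the four-step chain described in the introduction, spending the least regularity at each stage and checking that every intermediate equation stays in the class of Definition~\ref{def:class}.

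\emph{Step 1: trivial frame.} Scale by $c=\Phi^{-1}$. This passes to the canonical de-scaled representative with frame $(1,\nu)$ and $\nu\in C^{1,\alpha}_{\mathrm{loc}}$. Then apply the straightening substitution of \cite[\S 9]{framed}. Concretely, take $w=\varphi w'+\psi\bar w'$ with $(\varphi,\psi)$ built algebraically from $\nu$ (e.g.\ $\varphi=1$, $\psi=-\bar\nu$ up to normalisation), so that $|\varphi|>|\psi|$. Follow it by a $C^\alpha$ scaling that returns the frame to $(1,0)$. The output is $w_{\bar z}-\mu w_z+\Acal w+\Bcal\bar w=0$. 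Here $\Acal,\Bcal\in C^\alpha_{\mathrm{loc}}$, because only one derivative of $\nu$ enters, through $W_L$. The $L$-Wronskian is absorbed into the $\bar w$ coefficient, as recorded there.

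\emph{Step 2: uniformize $\mu$.} Because $\sup|\mu|<1$ and $\mu\in C^\alpha_{\mathrm{loc}}$, the measurable Riemann mapping theorem gives a quasiconformal homeomorphism solving the Beltrami equation. Local Schauder theory upgrades it to a $C^{1,\alpha}_{\mathrm{loc}}$ diffeomorphism of $\Omega$ onto a simply connected domain. Composing with a Riemann map onto $\D$ gives an orientation-preserving chart in which $\mu=0$. Under this change of variables the equation becomes $w_{\bar z}+\Acal' w+\Bcal'\bar w=0$, after a scaling by the $C^\alpha$ factor $\overline{f_{\bar z}}$-type Jacobian term. The zeroth-order coefficients are multiplied by $C^\alpha$ factors and composed with $C^{1,\alpha}$ maps, so they remain $C^\alpha_{\mathrm{loc}}(\D)$.

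\emph{Step 3: remove $\Acal$.} Set $w=e^{s}w'$ with $\bar\partial s=-\Acal$. This gives $w'_{\bar z}=-\Bcal e^{\bar s-s}\bar w'$, so $B=-\Bcal e^{\bar s-s}\in C^\alpha_{\mathrm{loc}}$. The main obstacle lies here: $\Acal$ is only locally Hölder on $\D$, with no integrability up to $\partial\D$, so the Pompeiu integral over $\D$ may diverge. I would try the following first. Take an exhaustion of $\D$ by discs $D_k$, and let $s_k$ be the Pompeiu integral of $\Acal\chi_{D_k}$, which lies in $C^{1,\alpha}$ near $D_{k-1}$. The differences $s_{k+1}-s_k$ are holomorphic on $D_k$. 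Runge approximation by polynomials then lets me correct them so the series converges locally uniformly (Mittag-Leffler style). The limit solves $\bar\partial s=-\Acal$ on all of $\D$ with $s\in C^{1,\alpha}_{\mathrm{loc}}$ by interior Schauder estimates. Alternatively, I would cite the global solvability of $\bar\partial$ on open subsets of $\C$ (Hörmander, or $\bar\partial$ being surjective on $C^\infty$ and then regularising). Either way the gauge $e^s$ is zero-free and $C^{1,\alpha}_{\mathrm{loc}}$, which makes it an admissible substitution with $\psi=0$.

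\emph{Step 4: identify the numerator.} The final equation has trivial frame, $\mu=0$, $\fa=0$, $\fb=-B$. Since $W_L(1,0)=0$, the definition gives $N=\fb=-B$. The sign is left to Remark~\ref{rem:sign}; any constant scaling by $-1$ would flip it, for instance. The composite of the finitely many admissible morphisms above is the required equivalence.
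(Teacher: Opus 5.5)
Your four steps are the paper's own chain: straightening substitution plus scaling, then Beltrami chart composed with a Riemann map, then a global $\bar\partial$-gauge, then reading off $N=\fb=-B$ from $W_L(1,0)=0$. In Step 3, your Runge/Mittag-Leffler exhaustion is the classical one-variable proof of the surjectivity of $\bar\partial$ on open sets, which the paper simply cites from H\"ormander together with elliptic regularity. It is therefore a valid, more self-contained substitute; on concentric disks, Runge approximation is just truncation of Taylor series.

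Three details need correcting.

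First, the straightening datum is $\psi=-\nu$, not $-\bar\nu$. On the frame $(1,\nu)$, the substitution $w=w'+\psi\bar w'$ leaves the term $(\nu+\psi)\,L\bar w'$. With $\psi=-\bar\nu$ this is $2i\operatorname{Im}\nu\,L\bar w'$, which does not vanish unless $\nu$ is real.

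Second, in Step 2 the factor your scaling must undo is $\overline{f_z}\,(1-|\mu|^2)$. For $w=W\circ f$ with $f_{\bar z}=\mu f_z$, one has $w_{\bar z}-\mu w_z=\overline{f_z}(1-|\mu|^2)\,(W_{\bar\zeta}\circ f)$. By contrast, $\overline{f_{\bar z}}=\bar\mu\,\overline{f_z}$ vanishes wherever $\mu$ does, so it is not an admissible scaling.

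Third, a scaling by $-1$ does not flip the sign of the numerator. Since $N\mapsto c^2N$, $N$ is unchanged, and the frame $(-1,0)$ leaves the trivial slice. The flip is realized instead by the constant gauge: the substitution $w=iw'$ followed by the scaling $c=-i$ carries $w_{\bar z}=B\bar w$ to $w'_{\bar z}=-B\bar w'$. Hence $B$ and $-B$ lie in one orbit, which is consistent with the paper's remark that the sign is immaterial.
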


\begin{proof}
Three steps, each from the companions.

\emph{Step 1 (straightening).} By \cite[Prop.~9.1]{framed}, the
substitution $w = w' - \nu\bar w'$ with
$\nu = \Psi/\Phi \in C^{1,\alpha}_{\mathrm{loc}}$,
followed by the scaling $c = (\Phi(1 - |\nu|^2))^{-1}$, carries the
equation onto the trivial-frame slice over the same $\mu$: the
Beltrami--Vekua equation
$w'_{\bar z} - \mu w'_z + \Acal_{\mathrm{str}} w' + \Bcal_{\mathrm{str}} \bar w' = 0$,
with $\Bcal_{\mathrm{str}} = N/(\Phi^2(1 - |\nu|^2))$ --- $N$
understood in the projective reading of Definition~\ref{def:class} ---
and $\Acal_{\mathrm{str}}, \Bcal_{\mathrm{str}} \in C^{\alpha}_{\mathrm{loc}}$,
the sourceless condition persisting since $\ff = 0$ scales to $0$.

\emph{Step 2 (uniformization).} Extend $\mu$ by $0$ to $\C$; since
$\sup_\Omega |\mu| < 1$, the principal solution of the Beltrami
equation is a quasiconformal homeomorphism of $\C$, and on $\Omega$,
where $\mu \in C^{\alpha}_{\mathrm{loc}}$, it is a
$C^{1,\alpha}_{\mathrm{loc}}$ diffeomorphism with positive Jacobian
\cite{vekua, aim}; composing with a Riemann map of the image ---
conformal, hence preserving the slice $\mu = 0$ it lands on --- and
applying the diffeomorphism law of \cite[Prop.~7.1]{framed} carries
the equation to $w_{\bar z} + \Acal w + \Bcal \bar w = 0$ on $\D$,
with $\Acal, \Bcal \in C^{\alpha}_{\mathrm{loc}}(\D)$. (The change of
variables multiplies the frame by the weight $\rho$ and is undone by
the corresponding scaling, per the reading convention of
\cite[\S 2]{charge}; on the slice $\mu = 0$ reached here, minimal
forms pull back to minimal forms directly, as Step 3 of
Proposition~\ref{prop:residual} makes explicit.)

\emph{Step 3 (the $\Acal$-gauge).} Solve $\bar\partial s = -\Acal$ on
$\D$. Locally this is the Pompeiu integral; globally,
$\Acal \in C^{\alpha}_{\mathrm{loc}}(\D)$ need not be integrable up to
the boundary, and we invoke instead the surjectivity of the elliptic
constant-coefficient operator $\bar\partial$ on $\mathcal{D}'(\D)$
--- every open planar set is $P$-convex for an elliptic $P$
\cite[Ch.~10]{hormander} --- together with elliptic regularity,
obtaining $s \in C^{1,\alpha}_{\mathrm{loc}}(\D)$. The gauge
$w = \varphi w'$, $\varphi = e^{s}$, zero-free, acts by
\cite[Prop.~5.1]{charge}:
$\Acal \mapsto \Acal + \varphi_{\bar z}/\varphi = \Acal + \bar\partial s = 0$
and $\Bcal \mapsto \Bcal\,\bar\varphi/\varphi =: -B$. The result is
\eqref{eq:minimal} with $B \in C^{\alpha}_{\mathrm{loc}}(\D)$.
\end{proof}

\begin{remark}[Sign convention]\label{rem:sign}
We write the minimal form as $w_{\bar z} = B\bar w$, so that
$B = -\Bcal$ of the slice presentation
$w_{\bar z} + \Acal w + \Bcal\bar w = 0$; since every law below is either
linear or quadratic in the field and the invariants consume $|B|$ and
$\Delta\arg B$, the sign is immaterial, and we will freely call $B$
the numerator field of the minimal representative.
\end{remark}

The equivalence of Definition~\ref{def:class} is generated by
morphisms of three types, and a finite composite of generators may in
principle pass through intermediate equations lying off any given
slice. The following lemma removes this obstacle once and for all: it
collapses every composite to one morphism of each type, so that
membership of the endpoints in a slice can be tested generator by
generator.

\begin{lemma}[Normal form of composites]\label{lem:composite}
Every finite composite of admissible morphisms equals the result of
applying, in order, a single substitution, a single scaling, and a
single orientation-preserving diffeomorphism, each admissible per
Definition~\ref{def:class}. Moreover substitutions and scalings do not
alter the Beltrami coefficient $\mu$ \cite{framed}; only the
diffeomorphism does.
\end{lemma}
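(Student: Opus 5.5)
The plan is to prove the lemma by rewriting rules on words in the three generator types. Each rule moves one generator past another and keeps every generator admissible in the sense of Definition~\ref{def:class}. Write $S_{\varphi,\psi}$ for the substitution $w=\varphi w'+\psi\bar w'$, $M_c$ for the scaling $\Ecal\mapsto c\,\Ecal$, and $D_G$ for the change of variables along an orientation-preserving $C^{1,\alpha}_{\mathrm{loc}}$ diffeomorphism $G$. Any composite is a word in these symbols. It suffices to prove four things: like generators compose to a generator of the same type; a scaling can be moved left past a substitution; a diffeomorphism can be moved left past a substitution; and a diffeomorphism can be moved left past a scaling. Repeated application then sorts any finite word into the order substitution, scaling, diffeomorphism. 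The sorting terminates because each swap decreases the number of inversions, and each merge shortens the word.

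For closure under composition, take substitutions first. $S_{\varphi_1,\psi_1}\circ S_{\varphi_2,\psi_2}$ is the substitution with $\varphi=\varphi_1\varphi_2+\psi_1\bar\psi_2$ and $\psi=\varphi_1\psi_2+\psi_1\bar\varphi_2$. The condition $|\varphi|>|\psi|$ follows from multiplicativity of the determinant $|\varphi|^2-|\psi|^2$ of the associated real-linear map, which equals $(|\varphi_1|^2-|\psi_1|^2)(|\varphi_2|^2-|\psi_2|^2)>0$. The regularity $C^{1,\alpha}_{\mathrm{loc}}$ is preserved under products. Scalings multiply, $M_{c_1}M_{c_2}=M_{c_1c_2}$, with $c_1c_2\in C^{\alpha}_{\mathrm{loc}}(\Omega;\C^*)$. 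Diffeomorphisms compose to a diffeomorphism that is again orientation-preserving and $C^{1,\alpha}_{\mathrm{loc}}$.

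The scaling/substitution swap is immediate. Scaling multiplies the whole equation by $c$, and substituting afterwards is the same as substituting first and then scaling, because the substitution acts on the unknown and the scaling on the equation. The two commute literally, with the same parameters.

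The diffeomorphism swaps rest on the transformation law \cite[Prop.~7.1]{framed}. A change of variables $D_G$ followed by $S_{\varphi,\psi}$ equals $S_{\varphi\circ G,\psi\circ G}$ followed by $D_G$, up to the convention of which variable the data live in. The conjugated parameters $\varphi\circ G,\psi\circ G$ stay $C^{1,\alpha}_{\mathrm{loc}}$ because $G$ is $C^{1,\alpha}$, and $|\varphi|>|\psi|$ holds pointwise. Similarly, $D_G$ then $M_c$ equals $M_{c\circ G}$ then $D_G$, with $c\circ G\in C^\alpha_{\mathrm{loc}}$. One bookkeeping point needs checking here. The diffeomorphism law multiplies the frame by a weight $\rho$ built from $G_z,G_{\bar z}$ and $\mu$, and this weight lies only in $C^{\alpha}_{\mathrm{loc}}$. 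If the paper's convention reads $D_G$ together with its compensating scaling as one generator, then $\rho$ is a scaling and is absorbed into the single scaling factor, which remains $C^\alpha$. In either reading the class is preserved, and this is the step where I expect the most care is needed. The substitution and scaling parameters are functions on the domain of the current intermediate equation, so I will fix the convention that $D_G$ pulls data back along $G$. I will also verify that the projective regularity $\nu\in C^{1,\alpha}_{\mathrm{loc}}$ survives each intermediate stage, as already noted in Definition~\ref{def:class}.

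The final assertion of the lemma is read off from \cite{framed}. Substitutions change the unknown only, and scalings change the equation by a factor. Neither touches the operator $L=\bar\partial-\mu\partial$: the term $\Phi(w_{\bar z}-\mu w_z)+\Psi(\cdots)$ keeps the same $\mu$ with a modified frame and zeroth-order terms. Only $D_G$ transforms $\mu$, by the Beltrami composition rule. Consequently the $\mu$ of the endpoint is that of the diffeomorphic image, which is what allows slice membership to be tested generator by generator.
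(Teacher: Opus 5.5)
Your proposal is correct and follows the paper's proof essentially step for step. Both arguments rest on the same three facts: each generator type is closed under composition (the determinant $|\varphi|^2-|\psi|^2$ being multiplicative), scalings commute literally with substitutions, and pointwise data are transported as $(\varphi\circ G,\psi\circ G)$ and $c\circ G$ across a diffeomorphism, after which every diffeomorphism is pushed to the end and each block collapses. Your extra bookkeeping on the frame weight $\rho$ agrees with the reading convention the paper uses in Step~2 of Theorem~\ref{thm:reduction}, where that weight is undone by a compensating scaling, and it does not change the argument.
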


\begin{proof}
Three observations. \emph{(1) Each type is closed under composition.}
Diffeomorphisms compose to diffeomorphisms and scalings compose by
multiplication of their zero-free $C^{\alpha}_{\mathrm{loc}}$ factors.
For substitutions, identify $w = \varphi w' + \psi\bar w'$ with the
field of $\R$-linear maps $v \mapsto \varphi v + \psi\bar v$, whose
determinant is $|\varphi|^2 - |\psi|^2$; the composite of two
substitutions is the substitution whose data
$(\varphi_1\varphi_2 + \psi_1\bar\psi_2,\;
\varphi_1\psi_2 + \psi_1\bar\varphi_2)$ is the pointwise composite of
the linear maps, again $C^{1,\alpha}_{\mathrm{loc}}$, and its
determinant is the product
$(|\varphi_1|^2 - |\psi_1|^2)(|\varphi_2|^2 - |\psi_2|^2) > 0$, which
for a map $v \mapsto av + b\bar v$ is exactly the frame inequality
$|a| > |b|$. \emph{(2) Scalings commute with substitutions}: the
scaling multiplies the equation, the substitution transforms the
unknown, and applying them in either order yields the same equation.
\emph{(3) Diffeomorphisms normalize the pointwise generators}: by the
transport of data, a change of variables $D$ followed by the
substitution of data $(\varphi, \psi)$ equals the substitution of data
$(\varphi \circ D, \psi \circ D)$ followed by $D$, and likewise for a
scaling $c$ with $c \circ D$; the transported data are again
admissible, $D$ being a $C^{1,\alpha}_{\mathrm{loc}}$ diffeomorphism,
and the frame inequality and zero-freeness are pointwise conditions,
preserved by composition with $D$. Hence in any finite word of
generators every diffeomorphism may be moved to the last position at
the cost of transporting the pointwise data it crosses; the
diffeomorphism block then collapses to one diffeomorphism by (1), and
the remaining pointwise block collapses, by (2) and (1), to one
substitution followed by one scaling. That substitutions and scalings
leave $\mu$ untouched is the structure of the class \eqref{eq:framed}
itself: the recombinations of the unknown and the scalings of the
equation act on the frame and the lower-order coefficients only, the
Beltrami coefficient being moved by changes of variables alone
\cite{framed}.
\end{proof}

\begin{proposition}[The residual groupoid]\label{prop:residual}
An admissible morphism carries a minimal form \eqref{eq:minimal} on
$\D$ to a minimal form on $\D$ if and only if it is a composite of:
\begin{itemize}
\item[(i)] \emph{holomorphic gauges}: $w = \varphi w'$ with $\varphi$
holomorphic and zero-free on $\D$, welded to the scaling
$c = \varphi^{-1}$, acting by
\[
B \;\longmapsto\; \frac{\bar\varphi}{\varphi}\; B ;
\]
\item[(ii)] \emph{M\"obius changes of variables}: $F \in \Mob$, acting
by
\[
B \;\longmapsto\; \overline{F'}\;\bigl(B \circ F\bigr).
\]
\end{itemize}
Consequently two minimal forms $B_1, B_2$ on $\D$ are equivalent if
and only if
\begin{equation}\label{eq:orbit}
B_1 \;=\; \frac{\bar\varphi}{\varphi}\;\overline{F'}\;\bigl(B_2 \circ F\bigr)
\qquad \text{for some } \varphi \text{ holomorphic zero-free},\ F \in \Mob .
\end{equation}
\end{proposition}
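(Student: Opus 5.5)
By Lemma~\ref{lem:composite}, every admissible morphism between two minimal forms can be written as a single substitution $w=\varphi w'+\psi\bar w'$, followed by a single scaling $c$, followed by a single orientation-preserving $C^{1,\alpha}_{\mathrm{loc}}$ diffeomorphism $D$. I therefore test each factor in turn. The plan has three steps: pin down $D$, then the substitution, then the scaling. The converse direction is a direct computation.

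\emph{The diffeomorphism.} Substitutions and scalings leave $\mu$ unchanged. So the intermediate equation after the pointwise block still has $\mu=0$, and $D$ must carry the slice $\mu=0$ on $\D$ to itself. For the diffeomorphism law of \cite[Prop.~7.1]{framed}, the transformed Beltrami coefficient vanishes exactly when $D_{\bar z}=0$. Hence $D$ is an orientation-preserving holomorphic diffeomorphism of $\D$ onto $\D$, that is, $D\in\Mob$.

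\emph{The substitution.} I compute the substitution's action on the trivial frame. With $\mu=0$, the frame after the substitution is, up to scaling, $(\varphi,\psi)$ or its conjugate variant, depending on the convention in \cite{framed}. A minimal form requires trivial frame after the scaling, so $\psi$ must vanish identically, since the frame ratio $\nu$ is scaling-invariant. The scaling is then forced to be $c=\varphi^{-1}$ in order to restore the frame $(1,0)$.

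\emph{The coefficient of $w$.} After the gauge, the coefficient of $w$ becomes $\varphi_{\bar z}/\varphi$, by the same formula from \cite[Prop.~5.1]{charge} used in Step~3 of Theorem~\ref{thm:reduction}. Requiring $\Acal=0$ gives $\varphi_{\bar z}=0$, so $\varphi$ is holomorphic; it is zero-free because $|\varphi|>|\psi|=0$. The coefficient of $\bar w$ transforms as $B\mapsto(\bar\varphi/\varphi)B$.

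\emph{The action of $F\in\Mob$.} Write $w'(\zeta)=w(F(\zeta))$. Then $w'_{\bar\zeta}=\overline{F'}\,w_{\bar z}\circ F=\overline{F'}\,(B\circ F)\,\overline{w'}$. This shows that the pullback is again minimal, with field $\overline{F'}(B\circ F)$.

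\emph{The converse.} Both kinds of move are admissible: $\varphi$ is $C^{1,\alpha}_{\mathrm{loc}}$ and zero-free, $c$ is $C^\alpha_{\mathrm{loc}}$ and zero-free, and Möbius maps are smooth diffeomorphisms. The computations above show they preserve the minimal slice. Composing them in the normal-form order gives \eqref{eq:orbit}. Since gauges are transported through $F$ by $\varphi\mapsto\varphi\circ F$, which is still holomorphic and zero-free, the order of the factors does not matter.

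\emph{Main obstacle.} The delicate point is the second step: showing that $\psi\equiv0$ is forced. An intermediate equation in a composite could a priori carry a nontrivial frame. Lemma~\ref{lem:composite} removes that worry, since only the endpoints matter. What remains is to check, in the projective reading of Definition~\ref{def:class}, that the endpoint frame ratio equals $\psi/\varphi$ (or $\bar\psi/\bar\varphi$), which forces $\psi=0$. I would verify this directly by substituting into $w_{\bar z}-B\bar w$ and reading off the coefficient of $\overline{w'_{z}}$, which is $\psi$, against the coefficient of $w'_{\bar z}$, which is $\varphi$.
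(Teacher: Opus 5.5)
Your proposal is correct and follows the paper's proof. Lemma~\ref{lem:composite} reduces any equivalence to one substitution, one scaling and one diffeomorphism $D$; the Beltrami law forces $D$ to be M\"obius; and the substitution--scaling pair, required to preserve the trivial frame and $\Acal = 0$, must be a holomorphic gauge. The only difference is that you verify this last stabilizer directly (frame $(\varphi,\psi)$, hence $\psi = 0$, $c = \varphi^{-1}$, $\varphi_{\bar z} = 0$), where the paper cites it.

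The one sentence to repair is ``only the endpoints matter.'' The equation between the scaling and $D$ is not an endpoint, and your argument needs it to be minimal. It is minimal because it is obtained from the final minimal form by the inverse change of variables, again a M\"obius map, and your own computation of the action of $F \in \Mob$ shows such maps preserve minimality. The paper states this step explicitly.
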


\begin{proof}
\emph{Stabilizer of the slice data.} A substitution--scaling pair
preserves the trivial frame iff it is a gauge $w = \varphi w'$,
$c = \varphi^{-1}$, with $\varphi \in C^{1,\alpha}$ zero-free
\cite[Prop.~5.1]{charge}, acting on the slice data by
$\Acal \mapsto \Acal + L\varphi/\varphi$,
$\Bcal \mapsto \Bcal\bar\varphi/\varphi$. At $\mu = 0$, preserving
$\Acal = 0$ forces $\varphi_{\bar z} = 0$: the gauge is holomorphic.
This gives (i).

\emph{Changes of variables.} Let $F : \D \to \D$ be an admissible
diffeomorphism carrying $\mu = 0$ to $\tilde\mu = 0$. The pulled-back
Beltrami coefficient is
$\tilde\mu = (F_{\bar z} + 0)/(F_z + 0) = F_{\bar z}/F_z$
\cite[Prop.~7.1]{framed}, so $\tilde\mu = 0$ forces $F_{\bar z} = 0$:
$F$ is a conformal automorphism of $\D$, i.e.\ $F \in \Mob$. Its
action on the minimal form is direct: for $h = w \circ F$, the chain
rule at $F_{\bar z} = 0$ gives
$h_{\bar z} = \overline{F_z}\,(w_{\bar\zeta} \circ F)
= \overline{F'}\,(B \circ F)\,(\bar w \circ F)
= \overline{F'}\,(B \circ F)\,\bar h$: the pullback of a minimal form
is already minimal, no scaling required, with the field of (ii).
(Equivalently: the frame weight is $\rho = F'/|F'|^2 = 1/\overline{F'}$,
and the reading convention $\tilde N = (N \circ F)/\rho$ of
\cite[\S 2]{charge} gives the same law.) This gives (ii).

\emph{Sufficiency of the composites.} Conversely, let a finite
composite of admissible morphisms carry the minimal form $B_2$ to
the minimal form $B_1$. By Lemma~\ref{lem:composite} the composite
is one substitution, then one scaling, then one diffeomorphism $D$,
and only $D$ touches the Beltrami coefficient. The intermediate
equation produced by the substitution--scaling pair therefore still
has $\mu = 0$, and $D$ carries $\mu = 0$ to $\mu = 0$: by the second
computation above, $D = F^{-1}$ for some $F \in \Mob$, and the
pullback of the minimal form $B_1$ along $F$ is again minimal.
The substitution--scaling pair thus maps the minimal form $B_2$ to a
minimal form, hence preserves the trivial frame and $\Acal = 0$, and
by the first computation it is a holomorphic gauge. Every equivalence
of minimal forms is therefore a composite of (i) and (ii), and
\eqref{eq:orbit} follows.
\end{proof}

\begin{proposition}[The phase of the weight is a gauge]\label{prop:weight-gauge}
For every $F \in \Mob$ there is a holomorphic zero-free $h$ on $\D$
with $h^2 = F'$, and
\[
\overline{F'} \;=\; \frac{\bar h}{h}\;|F'| .
\]
Consequently the orbit relation \eqref{eq:orbit} splits: $B_1$ and
$B_2$ are equivalent if and only if for some $F \in \Mob$
\begin{equation}\label{eq:orbit-split-2}
|B_1| \;=\; |F'|\;|B_2 \circ F|
\qquad\text{and}\qquad
\arg B_1 \;\equiv\; \arg(B_2 \circ F) \pmod{\text{harmonic functions}},
\end{equation}
the harmonic functions on $\D$ being exactly the phases
$-2\arg\varphi$ of the holomorphic zero-free gauges.
\end{proposition}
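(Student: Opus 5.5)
The plan has three steps: produce the square root, read off the factorization of $\overline{F'}$, and then translate the orbit relation \eqref{eq:orbit} into a modulus law and a phase law.

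\emph{The square root and the factorization.} For $F \in \Mob$, the derivative $F'$ is holomorphic and zero-free on $\D$, since $F$ is a conformal automorphism. Because $\D$ is simply connected, $F'$ has a holomorphic logarithm $\ell$, and I set $h := e^{\ell/2}$. Then $h$ is holomorphic and zero-free with $h^2 = F'$. The identity is pointwise:
\[
\overline{F'} = \bar h^2 = \frac{\bar h}{h}\,|h|^2 = \frac{\bar h}{h}\,|F'| .
\]
So the conformal weight factors as a gauge factor of type (i) in Proposition~\ref{prop:residual}, namely $\bar\varphi/\varphi$ with $\varphi = h$, times the positive function $|F'|$.

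\emph{Splitting the orbit relation.} By Proposition~\ref{prop:residual}, $B_1 \sim B_2$ if and only if \eqref{eq:orbit} holds. Substituting the factorization, this becomes $B_1 = (\overline{\varphi h}/(\varphi h))\,|F'|\,(B_2\circ F)$, with $\varphi h$ again holomorphic and zero-free. The question therefore reduces to the following: for given $F$, the relation $B_1 = (\bar\psi/\psi)\,|F'|\,(B_2\circ F)$ holds for some holomorphic zero-free $\psi$ if and only if the two conditions in \eqref{eq:orbit-split-2} hold.

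For the forward direction, take moduli; since $|\bar\psi/\psi| = 1$, this gives the modulus law. The phase of $\bar\psi/\psi$ is $-2\arg\psi = -2\,\mathrm{Im}\log\psi$. Here $\log\psi$ exists on $\D$, so $-2\arg\psi$ is harmonic, which gives the phase law.

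For the converse, let $u$ be harmonic with $\arg B_1 = \arg(B_2\circ F) + u$. Take a harmonic conjugate and write $u = \mathrm{Im}\, G$ with $G$ holomorphic, which is possible on the simply connected $\D$. Set $\psi := e^{-G/2}$. Then $-2\arg\psi = u$, and together with the modulus law this reproduces the relation. The same construction shows that every harmonic function arises as some $-2\arg\varphi$. Conversely, each such phase is harmonic, which gives the final clause of the statement.

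\emph{The main obstacle.} The delicate point is giving meaning to ``$\arg B$'' when $B$ has zeros or is merely continuous. I intend to read the phase law as a statement about the unimodular factors: on $\{B_1 \neq 0\}$, $B_1/|B_1| = e^{iu}\,(B_2\circ F)/|B_2\circ F|$ for some harmonic $u$ on $\D$. On the complement the modulus law forces both sides to vanish, so the relation holds trivially there. With this reading no continuous branch of $\arg B$ is needed; the only branches ever taken are those of $\log F'$ and $\log\psi$, which exist by simple connectivity. When $B$ is zero-free, $\arg B$ has a continuous branch on $\D$, again by simple connectivity, and the congruence modulo harmonic functions is literal.
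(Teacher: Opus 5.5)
Your proof is correct and follows the paper's argument. You take the holomorphic square root $h=e^{g/2}$ of $F'$, factor $\overline{F'}=(\bar h/h)\,|F'|$ so that the phase of the weight is absorbed into a holomorphic gauge, and identify the harmonic functions on $\D$ with the phases $-2\,\mathrm{Im}\log\varphi$ via harmonic conjugates. Your explicit two-direction check and your reading of the phase law through unimodular factors on $\{B_1\neq 0\}$ spell out what the paper leaves implicit when $B$ has zeros, without changing the route.
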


\begin{proof}
$F'$ is holomorphic and zero-free on the simply connected $\D$, hence
$F' = e^{g}$ with $g$ holomorphic, and $h := e^{g/2}$ is a holomorphic
square root; then
$\overline{F'} = \bar h\,\bar h = (\bar h/h)\,h\bar h = (\bar h/h)|F'|$.
The factor $\bar h/h$ is the action of the holomorphic gauge
$\varphi = h$, so absorbing it into (i) leaves the positive factor
$|F'|$ on the modulus and nothing on the phase. For the last claim:
$\arg(\bar\varphi/\varphi) = -2\arg\varphi = -2\,\mathrm{Im}\log\varphi$,
and as $\varphi = e^{g}$ ranges over the zero-free holomorphic
functions, $\mathrm{Im}\,g$ ranges over all harmonic functions on the
simply connected $\D$; conversely every harmonic $u$ on $\D$ is
$\mathrm{Im}\,g$ for some holomorphic $g$.
\end{proof}

\begin{remark}[Well-definedness of the minimal form]\label{rem:well-defined}
The reduction of Theorem~\ref{thm:reduction} involves choices --- the
straightening substitution is canonical, but the uniformizing chart,
the Riemann map, and the Pompeiu primitive are not. Any two reductions
of the same equation differ by an admissible morphism between minimal
forms, hence, by Proposition~\ref{prop:residual}, by an element of the
residual groupoid. Every quantity constructed below that is invariant
under \eqref{eq:orbit} is therefore an invariant of the original
equation, well-defined up to the single remaining ambiguity: the
M\"obius transformation relating two choices of chart. Invariants will
accordingly be \emph{fields on $\D$ modulo $\Mob$}, and numbers only
when a further integration kills the M\"obius freedom.
\end{remark}

With the residual symmetries computed and split, the construction of
the invariant pair is a short exercise in hyperbolic geometry, and is
carried out in the next section.

\section{The moduli pair}\label{sec:pair}

The orbit relation \eqref{eq:orbit-split-2} instructs the construction:
whatever is to be invariant must absorb one conformal factor on the
modulus and one harmonic function on the phase. The hyperbolic geometry
of $\D$ supplies exactly these two absorptions --- the first through
the conformal distortion identity of the M\"obius group, the second
through the Laplacian, which annihilates harmonic additions and
transforms conformally. This section builds the resulting pair of
fields and proves their exact equivariance.

\begin{definition}[The pair]\label{def:pair}
Let $w_{\bar z} = B\,\bar w$ be a minimal form on $\D$. Its
\emph{hyperbolic mass density} is the function
\[
\vartheta \;:=\; \tfrac14\,\bigl(1 - |z|^2\bigr)^2\,|B|^2 \;\ge\; 0,
\qquad Z(\vartheta) = Z(B).
\]
The minimal form is \emph{vortex-free} if $B$ is zero-free on $\D$;
then $B$ admits a continuous argument $\theta := \arg B$, single-valued
because $\D$ is simply connected and unique up to an additive constant
in $2\pi\Z$. When moreover some --- equivalently, the branches
differing by constants, every --- branch $\theta \in C^2(\D)$, as
happens in particular when $B \in C^2_{\mathrm{loc}}(\D)$, the
\emph{phase curvature} is the function
\[
\kappa \;:=\; \tfrac14\,\bigl(1 - |z|^2\bigr)^2\,\Delta\theta,
\]
independent of the branch. Both
normalizations are hyperbolic: writing
$\Delta_{\mathrm{hyp}} = \tfrac14(1 - |z|^2)^2\,\Delta$ for the
hyperbolic Laplacian and $\hyparea = 4(1 - |z|^2)^{-2}\,dx\,dy$ for the
hyperbolic area form, one has
$\kappa = \Delta_{\mathrm{hyp}}\arg B$ and, as
Remark~\ref{rem:currents} records, $\vartheta\,\hyparea$ is the mass
2-form of \cite{mass, framed}.
\end{definition}

Two lemmas carry the equivariance; both are classical, and their proofs
are displayed because the whole paper is the pair of cancellations they
encode.

\begin{lemma}[Hyperbolic identity]\label{lem:hyp}
For every $F \in \Mob$ and $z \in \D$,
\[
1 - |F(z)|^2 \;=\; |F'(z)|\,\bigl(1 - |z|^2\bigr).
\]
\end{lemma}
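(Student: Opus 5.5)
The plan is to use the explicit form of the disk automorphisms and reduce the identity to a single algebraic computation. Every $F \in \Mob$ can be written as
\[
F(z) = e^{i\lambda}\,\frac{z-a}{1-\bar a z}, \qquad a \in \D,\ \lambda \in \R .
\]
This is the classical description of the conformal automorphism group of $\D$, via the Schwarz lemma. The rotation factor $e^{i\lambda}$ changes neither $|F(z)|$ nor $|F'(z)|$, so it suffices to treat $\lambda = 0$.

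\emph{Derivative.} The quotient rule gives
\[
F'(z) = \frac{(1-\bar a z) + \bar a (z-a)}{(1-\bar a z)^2} = \frac{1-|a|^2}{(1-\bar a z)^2},
\qquad\text{so}\qquad
|F'(z)| = \frac{1-|a|^2}{|1-\bar a z|^2}.
\]

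\emph{Left-hand side.} Write $1-|F(z)|^2 = \bigl(|1-\bar a z|^2 - |z-a|^2\bigr)/|1-\bar a z|^2$ and expand the numerator:
\[
1 - 2\,\mathrm{Re}(\bar a z) + |a|^2|z|^2 - |z|^2 + 2\,\mathrm{Re}(\bar a z) - |a|^2 .
\]
The cross terms cancel, and the rest factors as $(1-|a|^2)(1-|z|^2)$. Dividing by $|1-\bar a z|^2$ and comparing with $|F'(z)|$ gives the identity. The only step that needs any care is this cancellation of the cross terms.

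As a cross-check, the identity says exactly that $F$ is an isometry of the hyperbolic metric $2|dz|/(1-|z|^2)$, which is the Schwarz--Pick lemma in its equality case. One could instead invoke that, but the direct computation is self-contained and is what I would write.
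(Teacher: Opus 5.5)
Your proof is correct and is essentially the paper's own argument: the same normal form $F(z)=e^{i\lambda}(z-a)/(1-\bar a z)$, the same expansion of $|1-\bar a z|^2-|z-a|^2$ with the cross terms $2\,\mathrm{Re}(\bar a z)$ cancelling, and the same comparison with $|F'(z)|=(1-|a|^2)/|1-\bar a z|^2$. The only difference is cosmetic: you discard the rotation factor at the outset, whereas the paper carries $e^{i\alpha}$ through the computation and drops it when taking moduli.
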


\begin{proof}
Write $F(z) = e^{i\alpha}(z - a)/(1 - \bar a z)$ with $|a| < 1$. Then
\[
1 - |F(z)|^2
= \frac{|1 - \bar a z|^2 - |z - a|^2}{|1 - \bar a z|^2}
= \frac{(1 - |a|^2)(1 - |z|^2)}{|1 - \bar a z|^2},
\qquad
F'(z) = \frac{e^{i\alpha}\,(1 - |a|^2)}{(1 - \bar a z)^2},
\]
the numerator identity by expanding both squares and cancelling the
cross terms $2\operatorname{Re}(\bar a z)$; comparing moduli gives the
claim.
\end{proof}

\begin{lemma}[Conformal transformation of the Laplacian]\label{lem:conf-lap}
Let $F$ be holomorphic on $\D$ and $u \in C^2$. Then
\[
\Delta(u \circ F) \;=\; |F'|^2\,(\Delta u)\circ F .
\]
Moreover, if $F'$ is zero-free --- as for every $F \in \Mob$ --- then
$\arg F'$ is harmonic on $\D$.
\end{lemma}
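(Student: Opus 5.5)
The plan is to work in complex notation throughout, writing $\Delta = 4\,\partial\bar\partial$ with $\partial = \partial_z$ and $\bar\partial = \partial_{\bar z}$. For the first claim, I would apply the chain rule to $u\circ F$ with $F$ holomorphic, so that $F_{\bar z}=0$ and $\overline{F}_z = \overline{F_{\bar z}} = 0$. This gives
\[
\partial(u\circ F) = (u_\zeta\circ F)\,F' + (u_{\bar\zeta}\circ F)\,\overline{F}_z = (u_\zeta\circ F)\,F'.
\]
Next I would apply $\bar\partial$. Since $F'$ is holomorphic, the factor $F'$ passes through unchanged. Applying the chain rule again to $u_\zeta\circ F$, only the $\bar\zeta$-derivative survives, and it comes multiplied by $\overline{F}_{\bar z} = \overline{F'}$. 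Thus
\[
\partial\bar\partial(u\circ F) = (u_{\zeta\bar\zeta}\circ F)\,|F'|^2 .
\]
Multiplying by $4$ yields $\Delta(u\circ F) = |F'|^2(\Delta u)\circ F$. The only regularity this step uses is $u\in C^2$, which makes the mixed partials of $u$ continuous and the chain rule legitimate; holomorphy of $F$ supplies everything else.

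For the second claim, suppose $F'$ is holomorphic and zero-free on the simply connected $\D$. Then, exactly as in the proof of Proposition~\ref{prop:weight-gauge}, I would write $F' = e^{g}$ with $g$ holomorphic on $\D$. This gives $\arg F' = \operatorname{Im} g$, up to an additive constant in $2\pi\Z$ that depends on the choice of branch. The imaginary part of a holomorphic function is harmonic, so every continuous branch of $\arg F'$ is harmonic. I would add the remark that harmonicity is a local property, so it also follows without simple connectivity from local logarithms: locally $\arg F' = \operatorname{Im}\log F'$ for a local holomorphic branch of the logarithm. Finally, for $F\in\Mob$ the explicit formula $F'(z) = e^{i\alpha}(1-|a|^2)/(1-\bar a z)^2$ from Lemma~\ref{lem:hyp} confirms that $F'$ is zero-free on $\D$, because $|\bar a z|<1$ there.

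I expect no real obstacle. The only point needing care is bookkeeping in the chain rule: one must check that the cross terms vanish precisely because $\partial\overline{F} = \overline{\bar\partial F} = 0$, and that no second-derivative term of $F$ appears, since $\bar\partial F' = 0$.
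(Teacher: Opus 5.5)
Your proposal is correct and follows the paper's proof: it also writes $\Delta = 4\,\partial\bar\partial$ and applies the chain rule. The only difference is that the paper takes $\bar\partial$ first and then $\partial$, so its cross term $(\bar\partial u\circ F)\,\partial\overline{F'}$ dies by $\partial\overline{F'} = 0$ rather than by your $\bar\partial F' = 0$. It likewise obtains harmonicity of $\arg F'$ as the imaginary part of a holomorphic branch of $\log F'$ on the simply connected disk, and your explicit check that $F'$ is zero-free for M\"obius maps is a harmless addition.
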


\begin{proof}
With $\Delta = 4\,\partial\bar\partial$ and $F$ holomorphic,
$\bar\partial(u \circ F) = (\bar\partial u \circ F)\,\overline{F'}$,
and applying $\partial$,
\[
\partial\bar\partial(u \circ F)
= (\partial\bar\partial u \circ F)\,F'\,\overline{F'}
+ (\bar\partial u \circ F)\,\partial\overline{F'}
= |F'|^2\,(\partial\bar\partial u)\circ F,
\]
since $\partial\overline{F'} = \overline{\bar\partial F'} = 0$. For the
last claim, $F'$ is holomorphic and zero-free on the simply connected
$\D$, so $\log F'$ has a holomorphic branch and
$\arg F' = \operatorname{Im}\log F'$ is harmonic.
\end{proof}

\begin{theorem}[Equivariance of the pair]\label{thm:equivariance}
Under the residual groupoid of Proposition~\ref{prop:residual}:
\begin{itemize}
\item[(i)] every holomorphic gauge $B \mapsto (\bar\varphi/\varphi)B$
preserves the vortex-free sector and fixes both fields pointwise:
\[
\vartheta' = \vartheta, \qquad \kappa' = \kappa ;
\]
\item[(ii)] every $F \in \Mob$, acting by
$B \mapsto \overline{F'}\,(B \circ F)$, preserves the vortex-free
sector and moves both fields by composition:
\[
\vartheta' \;=\; \vartheta \circ F, \qquad \kappa' \;=\; \kappa \circ F .
\]
\end{itemize}
Consequently, by Remark~\ref{rem:well-defined}, the pair
$(\vartheta, \kappa)$ --- taken modulo the simultaneous action
$(\vartheta, \kappa) \sim (\vartheta \circ F, \kappa \circ F)$ of
$\Mob$ --- is an invariant of the sourceless equation itself, not
merely of its minimal representative.
\end{theorem}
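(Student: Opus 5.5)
The plan is to verify each of the two actions separately on the two fields. The final consequence then follows from Remark~\ref{rem:well-defined} and the composite structure given by Proposition~\ref{prop:residual}.

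For (i), take a holomorphic zero-free $\varphi$. Then $|\bar\varphi/\varphi| = 1$, so $|B'| = |B|$ and $\vartheta' = \vartheta$ pointwise. Since $\bar\varphi/\varphi$ is unimodular and nonvanishing, $Z(B') = Z(B)$, and the vortex-free sector is preserved. For the phase, write $\varphi = e^{g}$ with $g$ holomorphic, which is possible because $\D$ is simply connected; this is the same device used in Proposition~\ref{prop:weight-gauge}. Then $\bar\varphi/\varphi = e^{-2i\operatorname{Im} g}$, so one branch of $\arg B'$ is $\theta - 2\operatorname{Im} g$. The function $\operatorname{Im} g$ is harmonic and smooth, so $\arg B'$ is $C^2$ exactly when $\theta$ is, and $\Delta \arg B' = \Delta\theta$. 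Hence $\kappa' = \kappa$. Branch independence, recorded in Definition~\ref{def:pair}, makes this choice of branch harmless.

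For (ii), take $F \in \Mob$. The modulus law reads $|B'| = |F'|\,|B\circ F|$. Squaring and multiplying by $\tfrac14(1-|z|^2)^2$, then using Lemma~\ref{lem:hyp} in the form $|F'(z)|^2(1-|z|^2)^2 = (1-|F(z)|^2)^2$, gives $\vartheta' = \vartheta\circ F$. Zero-freeness is preserved because $F'$ never vanishes and $F$ is a bijection of $\D$.

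For the phase, a branch of $\arg B'$ is $\theta\circ F - \arg F'$. Here $\arg F'$ is harmonic by Lemma~\ref{lem:conf-lap}, and $\theta\circ F$ is $C^2$ whenever $\theta$ is, since $F$ is real-analytic. By the first part of Lemma~\ref{lem:conf-lap}, $\Delta\arg B' = |F'|^2(\Delta\theta)\circ F$. Multiplying by $\tfrac14(1-|z|^2)^2$ and applying Lemma~\ref{lem:hyp} again converts the prefactor $|F'|^2(1-|z|^2)^2$ into $(1-|F|^2)^2$, which yields $\kappa' = \kappa\circ F$. The same argument also shows that the $C^2$ sector on which $\kappa$ is defined is preserved by both actions.

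For the consequence, note that the actions compose: pulling back by $F_1$ and then by $F_2$ gives composition with $F_1\circ F_2$. Combined with Proposition~\ref{prop:residual}, this means any equivalence of minimal forms changes $(\vartheta,\kappa)$ only by simultaneous composition with one element of $\Mob$. Remark~\ref{rem:well-defined} then transfers the invariance to the original equation.

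The main obstacle is only bookkeeping: one must check that branch choices of $\arg$ are legitimate, which requires both the simple connectivity of $\D$ and the constancy of the difference of any two continuous branches. Everything else is the two cancellations supplied by Lemmas~\ref{lem:hyp} and \ref{lem:conf-lap}.
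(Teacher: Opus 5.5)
Your proposal is correct and follows the paper's proof essentially step for step. In (i) the unimodular gauge factor fixes $|B|$ and shifts the phase by the harmonic $-2\arg\varphi$. In (ii) Lemma~\ref{lem:hyp} absorbs $|F'|^2$ in $\vartheta$, and Lemma~\ref{lem:conf-lap} (harmonicity of $\arg F'$ together with the conformal law) gives $\Delta\theta' = |F'|^2(\Delta\theta)\circ F$, after which the hyperbolic identity absorbs the conformal factor again. Your explicit remarks that successive M\"obius pullbacks compose to a single composition and that the $C^2$ sector is preserved only spell out what the paper compresses into its appeal to Remark~\ref{rem:well-defined}.
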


\begin{proof}
(i) The gauge factor $\bar\varphi/\varphi$ is unimodular and zero-free,
so $|B'| = |B|$, $Z(B') = Z(B)$, and $\vartheta' = \vartheta$. For the
phase, $\theta' = \theta - 2\arg\varphi$ up to an additive constant,
and $\arg\varphi = \operatorname{Im}\log\varphi$ is harmonic
($\varphi$ holomorphic zero-free on the simply connected $\D$); hence
$\Delta\theta' = \Delta\theta$ and $\kappa' = \kappa$.

(ii) The weight $\overline{F'}$ is zero-free, so the vortex-free sector
is preserved. Modulus: $|B'| = |F'|\,|B \circ F|$, and by
Lemma~\ref{lem:hyp},
\[
\vartheta'
= \tfrac14\,(1 - |z|^2)^2\,|F'|^2\,|B \circ F|^2
= \tfrac14\,\bigl(1 - |F|^2\bigr)^2\,|B|^2 \circ F
= \vartheta \circ F .
\]
Phase: continuous branches satisfy
$\theta' = \theta \circ F - \arg F'$ up to an additive constant, so by
Lemma~\ref{lem:conf-lap} --- the harmonicity of $\arg F'$ killing the
second term and the conformal law transforming the first ---
\[
\Delta\theta' = \Delta(\theta \circ F) = |F'|^2\,(\Delta\theta)\circ F,
\qquad
\kappa'
= \tfrac14\,(1 - |z|^2)^2\,|F'|^2\,(\Delta\theta)\circ F
= \tfrac14\,\bigl(1 - |F|^2\bigr)^2\,(\Delta\theta)\circ F
= \kappa \circ F,
\]
the same hyperbolic identity absorbing the conformal factor twice
over. The final statement is Remark~\ref{rem:well-defined}: two
minimal reductions of one equation differ by a residual morphism,
under which, by (i) and (ii), the pair moves by at most a common
M\"obius composition.
\end{proof}

The theorem realizes the division of labor announced in the
introduction: the modulus law of \eqref{eq:orbit-split-2} is absorbed
by the conformal factor of Lemma~\ref{lem:hyp}, the phase law by the
Laplacian of Lemma~\ref{lem:conf-lap}, and nothing of either survives
into the pair except the composition with $F$. A first dividend is
that the second field settles, within the minimal slice, the positive
normalization question of \cite[\S 4]{charge}:

\begin{corollary}[$\kappa$ is the exact obstruction to positivity]\label{cor:positive}
A vortex-free minimal form whose argument admits a $C^2$ branch ---
in particular, one with $B \in C^2_{\mathrm{loc}}$ --- is
holomorphic-gauge equivalent to the positive field $|B|$ if and only
if $\kappa \equiv 0$.
\end{corollary}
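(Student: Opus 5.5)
The plan is to prove the two directions separately. Both rest on the fact, from Proposition~\ref{prop:weight-gauge}, that holomorphic gauges act on the phase by adding exactly the harmonic functions $-2\arg\varphi$.

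\emph{Necessity.} Suppose $B$ is holomorphic-gauge equivalent to $|B|$. Then $|B| = (\bar\varphi/\varphi)B$ for some holomorphic zero-free $\varphi$. By Theorem~\ref{thm:equivariance}(i), $\kappa$ is fixed by holomorphic gauges, so $\kappa(B) = \kappa(|B|)$. The positive field $|B|$ has the constant branch $\arg|B| \equiv 0$, which is $C^2$, so its $\kappa$ vanishes identically. Hence $\kappa(B) \equiv 0$. One small point needs checking: $|B|$ is itself a vortex-free minimal form to which the definition of $\kappa$ applies. It is, since $|B| > 0$ is zero-free and its phase is the constant branch.

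\emph{Sufficiency.} Suppose $\kappa \equiv 0$. Since $(1-|z|^2)^2 > 0$ on $\D$, this gives $\Delta\theta = 0$ for the $C^2$ branch $\theta = \arg B$, so $\theta$ is harmonic on $\D$. Because $\D$ is simply connected, I choose a harmonic conjugate and obtain a holomorphic $g$ with $\operatorname{Im} g = \theta/2$. Set $\varphi := e^{g}$, which is holomorphic and zero-free. Then
\[
\frac{\bar\varphi}{\varphi} = e^{-2i\operatorname{Im} g} = e^{-i\theta},
\qquad
\frac{\bar\varphi}{\varphi}\,B = e^{-i\theta}\,|B|\,e^{i\theta} = |B| .
\]
By Proposition~\ref{prop:residual}(i), this is an admissible holomorphic gauge, welded to the scaling $c = \varphi^{-1}$, and it carries $B$ to $|B|$.

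\emph{Remaining check and main obstacle.} I must confirm that the gauge is admissible in the sense of Definition~\ref{def:class}, i.e.\ that $\varphi \in C^{1,\alpha}_{\mathrm{loc}}$ and the resulting field $|B|$ is still a member of the class. The first holds because $\varphi$ is holomorphic, hence smooth. The second holds because $|B| \in C^{\alpha}_{\mathrm{loc}}$ whenever $B$ is. The only genuinely nonroutine point is the harmonic conjugate, i.e.\ writing a real harmonic function on $\D$ as $\operatorname{Im}$ of a holomorphic function. That is exactly the converse half of Proposition~\ref{prop:weight-gauge}, so beyond the earlier results I expect no real obstacle.
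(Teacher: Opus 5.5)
Your proof is correct and follows essentially the paper's route: the holomorphic gauges shift the phase by exactly the harmonic functions (Proposition~\ref{prop:weight-gauge}), so the phase can be removed if and only if $\theta$ is harmonic, i.e.\ if and only if $\kappa \equiv 0$. Your necessity direction uses the gauge invariance of $\kappa$ (Theorem~\ref{thm:equivariance}(i)) rather than the paper's direct observation that $\theta$ then equals a harmonic function modulo a constant, but both rest on the same fact that $\arg\varphi$ is harmonic.
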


\begin{proof}
By Proposition~\ref{prop:weight-gauge} the gauge phases are exactly
the harmonic functions: a gauge carries $\theta$ to $\theta - 2\arg\varphi$
with $-2\arg\varphi$ an arbitrary harmonic function. The phase is
removable --- $\theta - 2\arg\varphi \equiv 0$ for some gauge --- if
and only if $\theta$ is itself harmonic, i.e.\ $\Delta\theta = 0$,
i.e.\ $\kappa \equiv 0$; and a gauge fixes $|B|$, so the normalized
field is $|B|$.
\end{proof}

\begin{remark}[The two currents]\label{rem:currents}
Both fields are densities, against the invariant hyperbolic area, of
2-forms of the covariance type of \cite{mass, framed}. At the minimal
representative ($\mu = 0$, trivial frame, $N = B$ up to sign) the mass
2-form of \cite{mass} is
$\Theta = |B|^2\,dx\,dy = \vartheta\,\hyparea$; and the \emph{phase
curvature form}
\[
K \;:=\; \Delta\theta\; dx\,dy \;=\; \kappa\,\hyparea
\]
is its partner: by Theorem~\ref{thm:equivariance} and the M\"obius
invariance of $\hyparea$ (Lemma~\ref{lem:hyp} squared), both $\Theta$
and $K$ are fixed by gauges and pulled back, $\tilde\Theta = F^*\Theta$
and $\tilde K = F^* K$, by changes of variables. The program of
\cite{mass, framed} produced one covariant 2-form from the modulus of
the numerator field; the phase, which the mass forgets, carries a
second 2-form of the same covariance, and the pair
$(\vartheta, \kappa)$ is the pair of their hyperbolic densities. The
form $K$, unlike $\Theta$, is not sign-definite, and it is the object
that will extend across the vortices in
Section~\ref{sec:vortices} --- where it acquires a companion current
carrying the charge. Throughout, the primary invariant of the phase
is the current $K$; when $K$ admits a density against hyperbolic
area, that density is $\kappa$, and everything the completeness
argument needs from the phase it takes at the current level
(Remark~\ref{rem:distributional}).
\end{remark}

\begin{remark}[Regularity, and the distributional phase curvature]\label{rem:distributional}
The reduction of Theorem~\ref{thm:reduction} delivers only
$B \in C^{\alpha}_{\mathrm{loc}}$, below the $C^2$-branch hypothesis
of Definition~\ref{def:pair}; the gap is closed in either of two ways.
Upward: each step of the reduction loses at most one derivative on the
data it differentiates and the two solves gain one on their output, so
data two degrees smoother than Definition~\ref{def:class} --- for
example a projective representative with
$\nu \in C^{3,\alpha}_{\mathrm{loc}}$ and de-scaled lower-order data
in $C^{2,\alpha}_{\mathrm{loc}}$ --- place $B$ in
$C^{2,\alpha}_{\mathrm{loc}}$ and make $\kappa$ a function. Downward,
and sufficient for everything this paper proves: for merely continuous
zero-free $B$ the phase curvature exists as the distribution
\[
\langle K, \psi \rangle \;:=\; \int_\D \theta\,\Delta\psi\; dx\,dy,
\qquad \psi \in C^\infty_c(\D),
\]
independent of the branch. Gauge additions are harmonic and are killed
weakly --- $\int (\arg\varphi)\,\Delta\psi = \int \Delta(\arg\varphi)\,\psi = 0$
by two integrations by parts against the compactly supported $\psi$ ---
and the M\"obius covariance $\tilde K = F^* K$ holds by the conformal
change of variables in the pairing, Lemma~\ref{lem:conf-lap} applied to
the test function: $\langle \tilde K, \psi \rangle = \langle K, \psi \circ F^{-1} \rangle$.
The completeness argument of the next section uses $K$ only through
Weyl's lemma --- a distributionally harmonic phase difference is
harmonic --- and therefore runs at bare continuity.
\end{remark}

With the pair constructed and its equivariance exact, completeness is
a short argument: matching $\vartheta$ matches the modulus, matching
$K$ makes the phase difference distributionally harmonic, and a
harmonic phase difference is a gauge. The next section carries it out.

\section{Completeness on the vortex-free sector}\label{sec:completeness}

This section proves that the pair separates orbits. The statement
requires knowing that ``vortex-free'' is a property of the equation
and not of the representative; that is settled first, and then the
theorem is a three-line assembly of what Sections~\ref{sec:reduction}
and~\ref{sec:pair} have prepared: matching $\vartheta$ matches the
modulus, matching $K$ makes the phase difference distributionally
harmonic, and a harmonic phase difference is a gauge.

\begin{lemma}[Vortex-freeness is intrinsic]\label{lem:intrinsic}
For a sourceless framed equation per Definition~\ref{def:class} the
following are equivalent: \emph{(i)} its numerator field $N$ is
zero-free on $\Omega$; \emph{(ii)} some minimal reduction is
vortex-free; \emph{(iii)} every minimal reduction is vortex-free.
\end{lemma}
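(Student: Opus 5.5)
The plan is to track the zero set of the numerator field through every step of the reduction and through the residual groupoid. Each admissible morphism multiplies $N$ by a zero-free factor and transports it by a diffeomorphism, so zero-freeness is invariant along the whole chain. The one fact to verify is the transformation law of $N$ under the three generators.

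\emph{Scalings.} By Definition~\ref{def:class}, $N\mapsto c^2N$ with $c$ zero-free, so $Z(N)$ is unchanged.

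\emph{Changes of variables.} Under an admissible diffeomorphism $D$, the reading convention of \cite[\S 2]{charge} gives $\tilde N=(N\circ D)/\rho$. Here the frame weight $\rho$ is zero-free, since $D$ has positive Jacobian. Hence $Z(\tilde N)=D^{-1}(Z(N))$, and emptiness of the zero set is preserved.

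\emph{Substitutions.} This is the step I expect to be the real obstacle. The law $N\mapsto(\text{zero-free factor})\cdot(N\text{ transported})$ under $w=\varphi w'+\psi\bar w'$ is stated in the companion papers \cite{framed, charge}. There $N$ is shown to be covariant, multiplied by a power of $|\varphi|^2-|\psi|^2$ together with unimodular or frame factors. I would cite that covariance rather than recompute the $L$-Wronskian.

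If a self-contained check is wanted, there is a shortcut. By Lemma~\ref{lem:composite}, any composite reduces to one substitution, one scaling and one diffeomorphism. It then suffices to know the law for the specific substitutions occurring in Theorem~\ref{thm:reduction} and Proposition~\ref{prop:residual}, and these are explicit.
\begin{itemize}
\item Step 1 (straightening) gives $\Bcal_{\mathrm{str}}=N/(\Phi^2(1-|\nu|^2))$, whose denominator is zero-free.
\item Step 3 (the $\Acal$-gauge) gives $B=-\Bcal\,\bar\varphi/\varphi$, with a unimodular factor.
\item Step 2 is a diffeomorphism followed by a conformal map, already treated above.
\end{itemize}
So along a chosen reduction, $Z(B)$ is the image of $Z(N)$ under the composite chart. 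In particular, $N$ is zero-free on $\Omega$ if and only if that minimal form is vortex-free, which gives (i)$\Leftrightarrow$(ii) for each reduction.

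For (ii)$\Rightarrow$(iii), I would use Remark~\ref{rem:well-defined} rather than general substitution covariance. Any two minimal reductions differ by an element of the residual groupoid of Proposition~\ref{prop:residual}, and Theorem~\ref{thm:equivariance}(i),(ii) shows that both holomorphic gauges and M\"obius maps preserve the vortex-free sector. This also settles (i)$\Rightarrow$(iii) once (i)$\Rightarrow$(ii) holds for one reduction. The implication (iii)$\Rightarrow$(ii) is trivial, because Theorem~\ref{thm:reduction} guarantees that a reduction exists.

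The only delicate point is the identification of $N$ with $\Phi^2N_0$ in the projective reading, so that Step 1's formula applies. That identification is exactly the content of Definition~\ref{def:class} and \cite[Prop.~9.1]{framed}.
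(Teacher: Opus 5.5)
Your proposal is correct, and your first plan is exactly the paper's proof. The paper cites \cite[Prop.~4.2]{charge} for the fact that all three actions, substitutions included, multiply $N$ by zero-free factors and relocate its zero set by homeomorphisms. It then notes that any minimal reduction is a finite composite of admissible morphisms, so $Z(B)$ is a homeomorphic image of $Z(N)$, which gives all three equivalences at once.

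Your self-contained variant is a genuinely different decomposition. You verify the law only along the explicit chain of Theorem~\ref{thm:reduction}: the zero-free denominator $\Phi^2(1-|\nu|^2)$ of Step~1, the weighted diffeomorphism law with its compensating scaling in Step~2, and the unimodular factor of Step~3. You then transfer vortex-freeness between reductions through Remark~\ref{rem:well-defined}, Proposition~\ref{prop:residual} and Theorem~\ref{thm:equivariance}. This avoids the general substitution covariance and stays almost entirely inside the paper, but it proves only what the lemma asserts. The paper's citation gives the stronger fact that $Z(N)$, up to homeomorphism, is unchanged by arbitrary admissible composites, not just by the passage to minimal forms.

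One small correction: the appeal to Lemma~\ref{lem:composite} in your shortcut does no work. That lemma collapses a composite to one substitution but places no restriction on which substitution occurs. Your argument is carried by the explicit chain plus the residual groupoid, so that sentence can simply be dropped.
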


\begin{proof}
Under the three actions the numerator field is multiplied by zero-free
factors and its zero set relocated by homeomorphisms
\cite[Prop.~4.2]{charge}, and a minimal reduction is a finite
composite of admissible morphisms (Theorem~\ref{thm:reduction}); hence
$Z(B)$ is a homeomorphic image of $Z(N)$, and one is empty if and only
if the other is.
\end{proof}

We say the equation itself is \emph{vortex-free} in this case. Recall
from Remark~\ref{rem:distributional} that at the bare regularity of
Definition~\ref{def:class} the phase curvature is read as the current
$K$, with the M\"obius relation $K_1 = F^* K_2$ standing in for
$\kappa_1 = \kappa_2 \circ F$; the two coincide when the fields are
$C^2_{\mathrm{loc}}$.

\begin{theorem}[Completeness]\label{thm:completeness}
Let $\Ecal_1, \Ecal_2$ be vortex-free sourceless framed equations per
Definition~\ref{def:class}, on bounded simply connected domains
$\Omega_1, \Omega_2$, with minimal forms $B_i$ and pairs
$(\vartheta_i, K_i)$ on $\D$, $i = 1, 2$. Then $\Ecal_1$ and
$\Ecal_2$ are equivalent if and only if there is $F \in \Mob$ with
\[
\vartheta_1 \;=\; \vartheta_2 \circ F
\qquad\text{and}\qquad
K_1 \;=\; F^* K_2
\]
--- when the phases admit $C^2$ branches (in particular when
$B_1, B_2 \in C^2_{\mathrm{loc}}$), the second condition reading
$\kappa_1 = \kappa_2 \circ F$.
\end{theorem}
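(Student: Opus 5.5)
The plan is to reduce both directions to the minimal slice and then use the split orbit relation of Proposition~\ref{prop:weight-gauge}. By Theorem~\ref{thm:reduction}, $\Ecal_i$ is equivalent to its minimal form $B_i$ on $\D$. Equivalence is transitive, since composites of admissible morphisms are admissible. Hence $\Ecal_1\sim\Ecal_2$ if and only if $B_1\sim B_2$, and by Proposition~\ref{prop:residual} this holds if and only if \eqref{eq:orbit} holds for some holomorphic zero-free $\varphi$ and some $F\in\Mob$. By Lemma~\ref{lem:intrinsic}, both $B_i$ are zero-free, so continuous branches $\theta_i=\arg B_i$ exist and the currents $K_i$ are defined as in Remark~\ref{rem:distributional}.

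\emph{Necessity.} Suppose $B_1=(\bar\varphi/\varphi)\,\overline{F'}\,(B_2\circ F)$. Theorem~\ref{thm:equivariance} gives $\vartheta_1=\vartheta_2\circ F$ directly. For the phase, Remark~\ref{rem:distributional} at the current level gives $K_1=F^*K_2$: gauges add the harmonic function $-2\arg\varphi$, which is killed weakly; the Möbius weight adds the harmonic function $-\arg F'$; and the composition $\theta_2\circ F$ transforms by the change of variables in the pairing. When $C^2$ branches exist, this is the statement $\kappa_1=\kappa_2\circ F$ of Theorem~\ref{thm:equivariance}. The proof must also handle the fact that the invariants of $\Ecal_i$ are only defined up to the choice of reduction. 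Remark~\ref{rem:well-defined} settles this: a different choice replaces $(\vartheta_i,K_i)$ by a Möbius transform, which is absorbed into $F$.

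\emph{Sufficiency.} Suppose $F$ is given with $\vartheta_1=\vartheta_2\circ F$ and $K_1=F^*K_2$. Set $B_3:=\overline{F'}\,(B_2\circ F)$; this is a minimal form equivalent to $B_2$ by Proposition~\ref{prop:residual}(ii). By the equivariance just used, $\vartheta_3=\vartheta_2\circ F=\vartheta_1$ and $K_3=F^*K_2=K_1$. Since $\vartheta$ determines $|B|$ off the boundary through the positive factor $\tfrac14(1-|z|^2)^2$, we get $|B_3|=|B_1|$. Both fields are zero-free, so the quotient $B_1/B_3$ is unimodular and continuous, and equals $e^{iu}$ with $u=\theta_1-\theta_3$ a continuous function on the simply connected $\D$. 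Equality of the currents says $\int u\,\Delta\psi=0$ for all test functions $\psi$, so $u$ is weakly harmonic. By Weyl's lemma, $u$ is then a genuine harmonic function.

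By Proposition~\ref{prop:weight-gauge}, every harmonic function on $\D$ has the form $-2\arg\varphi$ for some holomorphic zero-free $\varphi$. Explicitly, take $u=\operatorname{Im} g$ with $g$ holomorphic and set $\varphi=e^{-g/2}$; then $\bar\varphi/\varphi=e^{i\operatorname{Im} g}=e^{iu}$. Hence $B_1=(\bar\varphi/\varphi)B_3$, which is a holomorphic gauge (Proposition~\ref{prop:residual}(i)). Composing gives \eqref{eq:orbit}, so $\Ecal_1\sim\Ecal_2$.

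I expect the only delicate step to be the current-level bookkeeping. One must check that $F^*K$ is defined so that it agrees with the current of the transformed field $\overline{F'}(B\circ F)$. One must also confirm that Weyl's lemma applies to a merely continuous (hence $L^1_{\mathrm{loc}}$) phase difference. Remark~\ref{rem:distributional} already supplies the first point, and the second is the standard Weyl lemma, so both should be short.
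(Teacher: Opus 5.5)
Your proposal is correct and follows the paper's proof essentially step for step. You reduce to the minimal forms; you get necessity from the gauge--M\"obius form \eqref{eq:orbit} together with Theorem~\ref{thm:equivariance} and Remark~\ref{rem:distributional}; and for sufficiency you pull $B_2$ back by $F$, match moduli through $\vartheta$, make the phase difference harmonic by Weyl's lemma, and realize it as a holomorphic gauge. The only deviation is cosmetic: your unimodular quotient $B_1/B_3=e^{iu}$ with $\varphi=e^{-g/2}$ replaces the paper's choice $\operatorname{Im} g=-h/2$, $\varphi=e^{g}$, and it absorbs the $2\pi\Z$ branch constant automatically.
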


\begin{proof}
Each $\Ecal_i$ is equivalent to its minimal form
(Theorem~\ref{thm:reduction}), so it suffices to prove the claim for
$B_1, B_2$.

\emph{Necessity.} If $B_1$ and $B_2$ are equivalent, they are related
by a residual morphism (Proposition~\ref{prop:residual}), and every
residual morphism is a single gauge followed by a single M\"obius map:
gauges and M\"obius maps each form a group, and M\"obius maps
normalize the gauges --- if $\varphi$ is a holomorphic zero-free gauge
and $F \in \Mob$, then $\varphi \circ F$ is again one --- so any
alternating composite collapses. By
Theorem~\ref{thm:equivariance} and Remark~\ref{rem:distributional},
the gauge fixes the pair and the M\"obius map $F$ carries it by
composition and pullback: $\vartheta_1 = \vartheta_2 \circ F$,
$K_1 = F^* K_2$.

\emph{Sufficiency.} Let $F \in \Mob$ realize the relation, and set
$\tilde B_2 := \overline{F'}\,(B_2 \circ F)$, the M\"obius pullback of
$B_2$ --- a minimal form equivalent to $B_2$
(Proposition~\ref{prop:residual}(ii)), vortex-free, with pair
$(\vartheta_2 \circ F,\; F^* K_2) = (\vartheta_1, K_1)$.

\emph{Moduli match.} From
$\tfrac14(1 - |z|^2)^2\,|\tilde B_2|^2 = \tfrac14(1 - |z|^2)^2\,|B_1|^2$
and $1 - |z|^2 > 0$ on $\D$, taking square roots gives
$|\tilde B_2| = |B_1|$ pointwise.

\emph{Phases differ harmonically.} Choose continuous branches
$\theta_1 = \arg B_1$ and $\tilde\theta_2 = \arg \tilde B_2$ and set
$h := \theta_1 - \tilde\theta_2 \in C^0(\D; \R)$. For every
$\psi \in C^\infty_c(\D)$,
\[
\int_\D h\,\Delta\psi\; dx\,dy
\;=\; \langle K_1, \psi \rangle - \langle K_{\tilde B_2}, \psi \rangle
\;=\; 0,
\]
so $h$ is distributionally harmonic, and by Weyl's lemma harmonic.

\emph{A harmonic phase difference is a gauge.} On the simply connected
$\D$ there is a holomorphic $g$ with $\operatorname{Im} g = -h/2$; the
gauge $\varphi := e^{g}$, holomorphic and zero-free, carries
$\tilde B_2$ to a minimal form of the same modulus $|B_1|$ and phase
\[
\tilde\theta_2 - 2\arg\varphi
\;=\; \tilde\theta_2 - 2\operatorname{Im} g
\;=\; \tilde\theta_2 + h
\;=\; \theta_1,
\]
after absorbing the additive $2\pi\Z$ constant of the branches into
$h$. The gauged field equals $B_1$, and the chain
$\Ecal_1 \sim B_1 = (\text{gauge})\,\tilde B_2 \sim B_2 \sim \Ecal_2$ closes.
\end{proof}

The converse direction of the moduli description --- which pairs
actually occur --- is free, and freeness is itself a statement: no
compatibility condition couples the two fields.

\begin{proposition}[Realization, and the range at bare regularity]\label{prop:realization}
\begin{itemize}
\item[(i)] For every $\vartheta \in C^{\alpha}_{\mathrm{loc}}(\D)$ with
$\vartheta > 0$ pointwise and every
$\kappa \in C^{\alpha}_{\mathrm{loc}}(\D; \R)$ there is a vortex-free
minimal form --- a member of the class of
Definition~\ref{def:class}, on $\Omega = \D$ --- with hyperbolic mass
density $\vartheta$ and phase curvature $\kappa$; and by
Theorem~\ref{thm:completeness} it is unique up to the residual
groupoid.
\item[(ii)] At the bare regularity of the class, the pairs
$(\vartheta, K)$ of vortex-free minimal forms are exactly those with
\[
\vartheta \in C^{\alpha}_{\mathrm{loc}}(\D),\quad \vartheta > 0,
\qquad
K = \Delta\theta\,dx\,dy \ \text{ in } \mathcal{D}'(\D)\ \text{ for
some } \theta \in C^{\alpha}_{\mathrm{loc}}(\D; \R).
\]
\end{itemize}
\end{proposition}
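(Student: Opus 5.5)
The construction is explicit: build $B$ as a modulus times a unimodular phase, $B := \rho\, e^{i\theta}$. For (i), set
\[
\rho := \frac{2\sqrt{\vartheta}}{1-|z|^2}.
\]
Since $\vartheta > 0$ is $C^\alpha_{\mathrm{loc}}$ and $t \mapsto \sqrt t$ is smooth on $(0,\infty)$, $\rho$ is positive and $C^{\alpha}_{\mathrm{loc}}(\D)$. By construction $\tfrac14(1-|z|^2)^2\rho^2 = \vartheta$.

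For the phase I need $\theta$ with
\[
\Delta\theta = f := \frac{4\kappa}{(1-|z|^2)^2} \in C^\alpha_{\mathrm{loc}}(\D).
\]
Because $f$ need not be integrable up to $\partial\D$, I will not use the Newtonian potential on $\D$ directly. Instead I invoke the same device as Step 3 of Theorem~\ref{thm:reduction}. By surjectivity of the elliptic operator $\Delta$ on $\mathcal{D}'(\D)$ (every open planar set is $P$-convex for elliptic $P$ \cite[Ch.~10]{hormander}), there is a distribution $\theta$ with $\Delta\theta = f$. Interior Schauder regularity then gives $\theta \in C^{2,\alpha}_{\mathrm{loc}}(\D)$, and $\theta$ can be taken real by passing to real parts. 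An alternative that avoids $P$-convexity is an exhaustion by disks $D_k \uparrow \D$ with Newtonian potentials $N_k$, corrected by harmonic functions via a Runge/Mittag-Leffler-type approximation so that they converge locally uniformly; I would cite the Hörmander route as the paper already does.

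Then $B = \rho e^{i\theta}$ is zero-free and lies in $C^\alpha_{\mathrm{loc}}$. Its phase has the $C^2$ branch $\theta$, so Definition~\ref{def:pair} gives phase curvature $\tfrac14(1-|z|^2)^2\Delta\theta = \kappa$. The equation $w_{\bar z} = B\bar w$ on $\Omega = \D$ belongs to the class of Definition~\ref{def:class}, with trivial frame $\nu = 0$, $\mu = 0$, $\fa = 0$, $\fb = -B$, and it is its own minimal form. Uniqueness up to the residual groupoid is exactly Theorem~\ref{thm:completeness}.

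For (ii), necessity holds by definition. A vortex-free minimal form has $B \in C^\alpha_{\mathrm{loc}}$ zero-free, so $\vartheta = \tfrac14(1-|z|^2)^2|B|^2$ is $C^\alpha_{\mathrm{loc}}$ and positive. The continuous branch $\theta = \arg B$ is locally $C^\alpha$, since locally $\theta = \operatorname{Im}\log B$ with $\log$ smooth away from $0$. Moreover $K = \Delta\theta\,dx\,dy$ in $\mathcal{D}'$ by Remark~\ref{rem:distributional}. Sufficiency repeats the construction with the given $\theta$ in place of the solved one: $B := \rho e^{i\theta}$ is $C^\alpha_{\mathrm{loc}}$ and zero-free, has continuous branch $\arg B = \theta$, and so has current exactly $K$ and density $\vartheta$.

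The only nontrivial step is the global solvability of $\Delta\theta = f$ on $\D$ for $f$ merely locally Hölder with arbitrary boundary blow-up. I handle it by citing $P$-convexity plus elliptic regularity, exactly as the paper does for $\bar\partial$ in Theorem~\ref{thm:reduction}.
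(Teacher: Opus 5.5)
Your proposal is correct and follows the paper's proof essentially step for step. It uses the forced modulus $2\sqrt{\vartheta}\,(1-|z|^2)^{-1}$, obtains the phase from the surjectivity of $\Delta$ on $\mathcal{D}'(\D)$ followed by elliptic regularity, and for (ii) reads $\theta$ as a local $C^{\alpha}$ branch of $\operatorname{Im}\log B$ and runs the same explicit construction $B = 2\sqrt{\vartheta}\,(1-|z|^2)^{-1}e^{i\theta}$ in reverse. Your extra remarks, taking real parts to make $\theta$ real and the exhaustion alternative, are harmless refinements of the same argument.
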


\begin{proof}
(i) The modulus is forced: set
$|B| := 2\sqrt{\vartheta}\,(1 - |z|^2)^{-1} \in C^{\alpha}_{\mathrm{loc}}(\D)$,
positive. For the phase, solve
\[
\Delta\theta \;=\; \frac{4\,\kappa}{(1 - |z|^2)^2} \;\in\; C^{\alpha}_{\mathrm{loc}}(\D):
\]
the right side need not be integrable up to the boundary, but the
Laplacian, elliptic with constant coefficients, is surjective on
$\mathcal{D}'(\D)$ \cite[Ch.~10]{hormander}, and elliptic regularity
lifts the solution to $\theta \in C^{2,\alpha}_{\mathrm{loc}}(\D; \R)$.
Then $B := |B|\, e^{i\theta} \in C^{\alpha}_{\mathrm{loc}}(\D)$ is
zero-free, the minimal form $w_{\bar z} = B\bar w$ has the data of
Definition~\ref{def:class} --- frame $(1, 0)$, $\mu = 0$, $\fa = 0$,
$\fb = -B$ --- and its pair is $(\vartheta, \kappa)$ by construction.

(ii) For a zero-free $B \in C^{\alpha}_{\mathrm{loc}}(\D)$, a
continuous branch $\theta = \arg B$ is locally the imaginary part of
a $C^{\alpha}$ branch of $\log B$ --- $B$ being locally bounded away
from zero --- so $\theta \in C^{\alpha}_{\mathrm{loc}}(\D; \R)$, and
the phase-curvature current of Remark~\ref{rem:distributional} is, by
its defining pairing, $\Delta\theta$ in $\mathcal{D}'(\D)$.
Conversely, given such $(\vartheta, \theta)$, the field
$B := 2\sqrt{\vartheta}\,(1 - |z|^2)^{-1}\,e^{i\theta}
\in C^{\alpha}_{\mathrm{loc}}(\D)$ is a vortex-free minimal form with
density $\vartheta$ and current $K = \Delta\theta\,dx\,dy$.
\end{proof}

\begin{corollary}[The moduli space]\label{cor:moduli}
The assignment $\Ecal \mapsto [(\vartheta, K)]$ induces a bijection
\begin{multline*}
\bigl\{\text{vortex-free sourceless framed equations}\bigr\}/\!\sim
\\
\;\longrightarrow\;
\Bigl\{\,(\vartheta,\, \Delta\theta\,dx\,dy) \;:\; \vartheta, \theta \in C^{\alpha}_{\mathrm{loc}}(\D;\R),\ \vartheta > 0\,\Bigr\} \Big/ \Mob\,,
\end{multline*}
which restricts, on the equations whose phase curvature is a
$C^{\alpha}_{\mathrm{loc}}$ function, to a bijection onto
\[
\Bigl\{\,(\vartheta, \kappa) \;:\; \vartheta, \kappa \in C^{\alpha}_{\mathrm{loc}}(\D;\R),\ \vartheta > 0\,\Bigr\} \Big/ \Mob\,,
\]
the M\"obius group acting by composition on the functions and by
pullback on the current --- an action preserving each displayed set,
since $F^*(\Delta\theta\,dx\,dy) = \Delta(\theta \circ F)\,dx\,dy$ by
conformal covariance. The moduli of the vortex-free sourceless class
are a positive H\"older density and the Laplacian current of a
H\"older phase, both unconstrained, modulo a three-parameter group
--- on the function-valued sector, two unconstrained scalar fields.
\end{corollary}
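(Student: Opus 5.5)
The plan is to assemble the corollary from three earlier results: Theorem~\ref{thm:equivariance} with Remark~\ref{rem:distributional} (well-definedness), Theorem~\ref{thm:completeness} (injectivity), and Proposition~\ref{prop:realization} (surjectivity). The proof is bookkeeping. The only genuinely new point to verify is that the M\"obius action preserves each displayed set, so that the quotients make sense.

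\emph{Step 1 (the action preserves the target sets).} For $F \in \Mob$ and a pair $(\vartheta, \Delta\theta\,dx\,dy)$ in the first set:
\begin{itemize}
\item $\vartheta \circ F$ is again positive and $C^{\alpha}_{\mathrm{loc}}$, since $F$ is smooth on $\D$.
\item For the current, I would check $F^*(\Delta\theta\,dx\,dy) = \Delta(\theta\circ F)\,dx\,dy$ in $\mathcal{D}'(\D)$. Pair against $\psi$ and change variables by $F$, whose Jacobian is $|F'|^2$. Then apply Lemma~\ref{lem:conf-lap} to the smooth test function $\psi \circ F^{-1}$, moving $\Delta$ onto the test side exactly as in Remark~\ref{rem:distributional}.
\item Since $\theta \circ F \in C^{\alpha}_{\mathrm{loc}}$, the image lies in the set again.
\item On the function-valued set, $(\vartheta\circ F, \kappa \circ F)$ is again H\"older and positive in the first slot.
\end{itemize}

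\emph{Step 2 (well-definedness and landing).} Given a vortex-free equation $\Ecal$:
\begin{itemize}
\item Theorem~\ref{thm:reduction} gives a minimal form $B$.
\item Lemma~\ref{lem:intrinsic} makes $B$ zero-free.
\item Proposition~\ref{prop:realization}(ii) shows that $(\vartheta, K)$ lies in the target set.
\end{itemize}
Different reductions, or equivalent equations, differ by a residual morphism (Remark~\ref{rem:well-defined}, Proposition~\ref{prop:residual}). That morphism collapses to a gauge followed by a M\"obius map, as in the necessity half of Theorem~\ref{thm:completeness}. By Theorem~\ref{thm:equivariance} and Remark~\ref{rem:distributional}, it moves the pair only within its $\Mob$-orbit. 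So the class $[(\vartheta,K)]$ depends only on the equivalence class of $\Ecal$.

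\emph{Step 3 (injectivity and surjectivity).} Injectivity is the sufficiency half of Theorem~\ref{thm:completeness}: equal orbits give $F$ with $\vartheta_1 = \vartheta_2\circ F$ and $K_1 = F^*K_2$, hence $\Ecal_1 \sim \Ecal_2$. Surjectivity is the converse half of Proposition~\ref{prop:realization}(ii). Given $(\vartheta,\theta)$, the field $B = 2\sqrt{\vartheta}(1-|z|^2)^{-1}e^{i\theta}$ is a vortex-free minimal form on $\Omega = \D$ belonging to the class, and its pair is the prescribed one.

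\emph{Step 4 (the restricted bijection).} This is the step needing most care, because the restriction must respect the quotient.
\begin{itemize}
\item The subclass where $K = \kappa\,\hyparea$ with $\kappa \in C^{\alpha}_{\mathrm{loc}}$ is a union of equivalence classes. Having a H\"older density is preserved under $F^*$: the density transforms as $\kappa \mapsto \kappa\circ F$, by invariance of $\hyparea$ (Lemma~\ref{lem:hyp}). It is also preserved by gauges, which fix $K$.
\item On this subclass, $K \leftrightarrow \kappa$ is a $\Mob$-equivariant bijection onto H\"older functions, so injectivity is inherited from Step 3.
\item Surjectivity onto the second set is Proposition~\ref{prop:realization}(i). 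It solves $\Delta\theta = 4\kappa(1-|z|^2)^{-2}$ with $\theta \in C^{2,\alpha}_{\mathrm{loc}}$, so in particular $\theta \in C^{\alpha}_{\mathrm{loc}}$ and the first set is hit as well.
\end{itemize}
The closing sentence of the corollary is then a restatement: nothing couples the two fields, and the only identification left is by $\Mob$.
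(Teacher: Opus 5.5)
Your proposal is correct and follows the paper's own proof. The paper gives the same assembly in a single line: well-definedness and injectivity from Theorems~\ref{thm:equivariance} and~\ref{thm:completeness}, and surjectivity onto the two displayed sets from Proposition~\ref{prop:realization}(ii) and (i) respectively. Your extra checks (that the M\"obius action preserves both sets, and that the H\"older-density subclass is a union of equivalence classes) are details the paper records only inside the statement itself.
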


\begin{proof}
Well-definedness and injectivity are Theorems~\ref{thm:equivariance}
and~\ref{thm:completeness}; surjectivity onto the two displayed sets
is Proposition~\ref{prop:realization}, parts (ii) and (i)
respectively.
\end{proof}

\begin{remark}[Relation to Bers' classification of generating pairs]\label{rem:bers}
Theorem~\ref{thm:completeness} is not Bers' theory repackaged, and it
is worth saying exactly why. On the pure-frame slice the frame is a
generating pair and the numerator field is, up to the frame
determinant, Bers' characteristic coefficient $b_{(F,G)}$
\cite[\S 11]{framed}; Bers' own equivalences of pairs
\cite{bers} operate over a \emph{fixed} conformal structure on a
fixed domain, and classify pairs through their characteristic
coefficients. The equivalence of the present paper is the full
groupoid --- every recombination of the unknown, every scaling, and
every orientation-preserving change of variables, the M\"obius action
included --- and the theorem's content is not the choice of the
invariant but the \emph{computation of the quotient}: the
identification of exactly which combinations of $|B|$ and $\arg B$
--- the hyperbolic density and the phase curvature, and nothing more
--- survive the whole group. That the quotient is an explicitly
identified infinite-dimensional function space modulo a
finite-dimensional group, rather than something wild, is the
substance; to the author's knowledge, no part of the classical theory
computes it. Section~\ref{sec:solutions}
completes the comparison at the level of solutions, where Bers'
equipotent pairs appear as the fiber upstream of the present quotient.
\end{remark}

\begin{remark}[Where simple connectivity is spent]\label{rem:simply-connected}
The hypothesis enters the sufficiency proof exactly once, at its last
step: a harmonic function on $\D$ is the imaginary part of a
holomorphic function, i.e.\ a gauge phase
(Proposition~\ref{prop:weight-gauge}). On a multiply connected domain
the phase difference $h$ is still harmonic, but only its
\emph{period-free} part is a gauge phase: the periods of the conjugate
differential $\star dh$ around the holes survive as additional
invariants, and the mod-$2$ phenomena of \cite[Rem.~3.6]{charge}
resurface inside them. We return to this in
Section~\ref{sec:discussion}; everything else in the proof ---
reduction, equivariance, Weyl --- is insensitive to the topology of
$\Omega$.
\end{remark}

\begin{remark}[The deficit of the mass, measured]\label{rem:deficit}
On the vortex-free sector the charge vanishes identically --- a
zero-free continuous field on a simply connected domain cannot wind
\cite[\S 3]{charge} --- so there the pair $(\Mcal, n)$ reduces to the
mass alone, and Corollary~\ref{cor:moduli} measures its deficit
exactly: of the two unconstrained fields modulo $\Mob$, the mass
retains the single number $\int_\D \vartheta\,\hyparea$. The ledger
of what each invariant retains and forgets is closed in
Remark~\ref{rem:two-numbers}, and
Section~\ref{sec:incompleteness} converts it into explicit pairs of
inequivalent equations that mass and charge cannot separate.
\end{remark}

\section{Mass and charge as projections of the moduli}\label{sec:massandcharge}

The mass, and the numerical charge wherever it is charge-admissible,
are invariants of the equivalence class and hence functionals of the
complete moduli data --- the pair $(\vartheta, K)$ on the vortex-free
sector, the triple $(\vartheta,\, d\eta,\, d{\star}\eta)$ of
Section~\ref{sec:vortices} across the vortices. This section identifies
the relevant functionals --- one moment of the first field and,
whenever defined, one total flux carried by the charge current,
purely atomic on the tame sector and absent from the vortex-free pair
--- and displays the much larger family of invariants generated by
the moduli.

\begin{proposition}[The mass is the zeroth hyperbolic moment]\label{prop:mass-moment}
For every sourceless framed equation per Definition~\ref{def:class},
with minimal hyperbolic density $\vartheta$,
\[
\Mcal \;=\; \int_\D \vartheta\; \hyparea \;\in\; [0, \infty] .
\]
On the vortex-free sector $\Mcal > 0$; it may be infinite, the class
imposing no integrability at the boundary.
\end{proposition}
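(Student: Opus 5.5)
The plan is to compute the mass on the minimal representative and then transport it back to the original equation. The mass is an invariant of the equivalence class: it integrates $|N|^2$ against the invariant weight of \cite{mass, framed}, and that weighted density is the covariant mass 2-form $\Theta$, which scalings, substitutions and changes of variables move only by pullback. We will take this invariance from the companions rather than reprove it. It lets us evaluate $\Mcal$ on any minimal reduction supplied by Theorem~\ref{thm:reduction}. Remark~\ref{rem:well-defined} and Theorem~\ref{thm:equivariance} guarantee that the answer does not depend on which reduction is chosen.

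On the minimal form the data are $\mu=0$, trivial frame and $N=-B$ (Remark~\ref{rem:sign}). The frame determinant is then $1$ and the conformal factor of $\mu$ is trivial, so the invariant weight reduces to Lebesgue measure. This gives
\[
\Mcal=\int_\D |B|^2\,dx\,dy .
\]
Next we use the identity $|B|^2\,dx\,dy = \tfrac14(1-|z|^2)^2|B|^2\cdot 4(1-|z|^2)^{-2}\,dx\,dy = \vartheta\,\hyparea$ from Remark~\ref{rem:currents}. The integral therefore equals $\int_\D\vartheta\,\hyparea$, an integral of a nonnegative measurable function, with value in $[0,\infty]$. As a consistency check, the M\"obius law $\vartheta'=\vartheta\circ F$ together with the invariance $F^*\hyparea=\hyparea$ (Lemma~\ref{lem:hyp} squared) shows directly that the integral is unchanged by a change of chart. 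A holomorphic gauge fixes $\vartheta$ pointwise.

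For positivity on the vortex-free sector, Lemma~\ref{lem:intrinsic} makes $B$ zero-free. Hence $\vartheta>0$ everywhere, and since $\vartheta$ is continuous it has a positive lower bound on a small disk. The integral is therefore strictly positive. To show that infinity genuinely occurs, take $B\equiv 1$. Its mass is finite, equal to $\pi$, so a worse example is needed. Take instead $B=(1-|z|^2)^{-1}$, which lies in $C^\alpha_{\mathrm{loc}}(\D)$ and so belongs to the class by Proposition~\ref{prop:realization}. Then $\vartheta\equiv\tfrac14$, and $\int_\D\vartheta\,\hyparea$ is one quarter of the hyperbolic area of $\D$, which is infinite.

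The main obstacle is the first step: identifying the companion-paper weight with Lebesgue measure on the minimal slice. This needs the de-scaling convention $N=\Phi^2N_0$ of Definition~\ref{def:class} to match the normalization of \cite{mass}, and it needs the invariance of $\Mcal$ to hold under the full groupoid at $C^\alpha$ regularity. We will handle this by checking the conventions directly on the trivial-frame, $\mu=0$ slice. There all three factors in the weight are $1$, and $N=\fb=-B$ by Theorem~\ref{thm:reduction}.
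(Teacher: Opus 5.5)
Your proposal is correct and matches the paper's proof: you take invariance of the mass from the companion papers, evaluate the mass 2-form on the minimal slice as $|B|^2\,dx\,dy=\vartheta\,\hyparea$, get independence of the reduction from gauge-invariance of $\vartheta$ together with M\"obius invariance of $\hyparea$, and read positivity off the continuous, zero-free $\vartheta$. Your explicit witness $B=(1-|z|^2)^{-1}$, with $\vartheta\equiv\tfrac14$ and hence infinite mass, is a useful addition, since the paper asserts the ``may be infinite'' clause without giving an example.
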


\begin{proof}
The mass is invariant under every admissible morphism
\cite[Thm.~8.2]{framed}, hence computable on the minimal
representative, where the mass 2-form is
$\Theta = |\fb|^2 (1 - |\mu|^2)^{-1}\, dx\,dy = |B|^2\, dx\,dy
= \vartheta\,\hyparea$ (Remark~\ref{rem:currents}). The value is
independent of the choice of reduction, as it must be: two reductions
differ by a residual morphism, gauges fix $\vartheta$, and for
$F \in \Mob$ the substitution $\vartheta \mapsto \vartheta \circ F$ is
absorbed by the M\"obius invariance of $\hyparea$
(Lemma~\ref{lem:hyp} squared) under the change of variables.
Positivity on the vortex-free sector is the positivity of the
continuous $\vartheta$.
\end{proof}

The same mechanism --- equivariant integrand against invariant area
--- manufactures invariants wholesale, and it is worth recording how
much the pair generates beyond the mass.

\begin{proposition}[The hyperbolic distribution profile]\label{prop:spectrum}
Let $\Ecal$ be a sourceless framed equation with minimal pair
$(\vartheta, \kappa)$.
\begin{itemize}
\item[(i)] For every Borel $\Phi : [0, \infty) \to [0, \infty]$,
\[
I_\Phi(\Ecal) \;:=\; \int_\D \Phi(\vartheta)\;\hyparea \;\in\; [0, \infty]
\]
is an invariant of the equivalence class. Equivalently, the
\emph{hyperbolic distribution function}
$\lambda_\vartheta(t) := \hyparea\bigl(\{\vartheta > t\}\bigr)$,
$t > 0$, is an invariant function of $t$, and the mass is its layer-cake
integral $\Mcal = \int_0^\infty \lambda_\vartheta(t)\, dt$.
\item[(ii)] On the vortex-free sector with
$B \in C^2_{\mathrm{loc}}$, the joint pushforward measure
$(\vartheta, \kappa)_*\,\hyparea$ on $(0, \infty) \times \R$ is an
invariant. More generally, every M\"obius-equivariant differential
expression of the pair, whenever defined with sufficient regularity,
gives an invariant after integration against $\hyparea$ (possibly
with value $+\infty$). For example, if
$B \in C^3_{\mathrm{loc}}$, this applies to the hyperbolic gradient
energies
$\tfrac14(1 - |z|^2)^2\,|\nabla\vartheta|^2$ and
$\tfrac14(1 - |z|^2)^2\,|\nabla\kappa|^2$.
\end{itemize}
\end{proposition}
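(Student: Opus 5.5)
The argument follows the template of Proposition~\ref{prop:mass-moment}: an equivariant integrand integrated against the invariant area $\hyparea$, with the integral then transported by change of variables.

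For (i), fix a Borel $\Phi \ge 0$ and two minimal reductions of equivalent equations. By Remark~\ref{rem:well-defined} and the necessity half of Theorem~\ref{thm:completeness}, their densities satisfy $\vartheta_1 = \vartheta_2 \circ F$ for some $F \in \Mob$, since gauges fix $\vartheta$ by Theorem~\ref{thm:equivariance}(i). This step is valid on the whole class, not only the vortex-free sector, because the modulus law of Proposition~\ref{prop:weight-gauge} needs no branch of the argument. Then
\[
\int_\D \Phi(\vartheta_2 \circ F)\,\hyparea \;=\; \int_\D \Phi(\vartheta_2)\,(F^{-1})^*\hyparea \;=\; \int_\D \Phi(\vartheta_2)\,\hyparea ,
\]
where the last equality is the $F$-invariance of $\hyparea$ (Lemma~\ref{lem:hyp} squared, combined with the Jacobian $|F'|^2$). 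The change of variables for nonnegative Borel integrands through a diffeomorphism holds in $[0,\infty]$ by Tonelli-type monotone class reasoning, so infinite values cause no trouble. The distribution-function statement is the special case $\Phi = \mathbf 1_{(t,\infty)}$. The layer-cake identity $\Mcal = \int_0^\infty \lambda_\vartheta(t)\,dt$ follows from Fubini for nonnegative functions together with Proposition~\ref{prop:mass-moment}. The converse ``equivalently'' is also needed, namely that the distribution function determines every $I_\Phi$. I would prove it by noting that $\lambda_\vartheta$ determines the pushforward $\vartheta_*\hyparea$ on $(0,\infty)$, and that $I_\Phi = \int \Phi\, d(\vartheta_*\hyparea)$ plus the term $\Phi(0)\,\hyparea(Z(\vartheta))$.

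\emph{The main obstacle} is the set $\{\vartheta = 0\}$, where $\lambda_\vartheta$ does not see the value $\Phi(0)$. I would handle it by restricting the equivalence claim to $t > 0$ data, or equivalently to $\Phi$ with $\Phi(0) = 0$. For the zero set itself I would record separately that $\hyparea(Z(B))$ is invariant by the same transport argument, since $Z(B)$ moves by $F^{-1}$ as in Lemma~\ref{lem:intrinsic}.

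For (ii), on the $C^2$ vortex-free sector Theorem~\ref{thm:equivariance} gives the joint law $(\vartheta_1,\kappa_1) = (\vartheta_2,\kappa_2)\circ F$. Hence for every Borel set $E \subset (0,\infty)\times\R$,
\[
\hyparea\bigl((\vartheta_1,\kappa_1)^{-1}E\bigr) \;=\; \hyparea\bigl(F^{-1}(\vartheta_2,\kappa_2)^{-1}E\bigr) \;=\; \hyparea\bigl((\vartheta_2,\kappa_2)^{-1}E\bigr),
\]
again by invariance of $\hyparea$. The general principle is immediate: if a pointwise expression $P[\vartheta,\kappa]$ satisfies $P[u\circ F] = P[u]\circ F$ for all $F \in \Mob$, then $\int P\,\hyparea$ is invariant by the same computation.

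For the gradient energies, the task is to verify this equivariance. Writing $u\circ F$ with $F$ holomorphic gives $|\nabla(u\circ F)| = |F'|\,|\nabla u|\circ F$ (conformality). Then Lemma~\ref{lem:hyp} gives
\[
\tfrac14(1-|z|^2)^2|F'|^2 \;=\; \tfrac14(1-|F|^2)^2 ,
\]
so the expression $\tfrac14(1-|z|^2)^2|\nabla u|^2$ is composed with $F$. Finally, $C^3$ regularity of $B$ makes $\kappa \in C^1$, so both energies are defined pointwise.
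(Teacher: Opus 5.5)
Your argument for everything the paper actually proves is the paper's own proof. Gauges fix $\vartheta$ and $\kappa$, M\"obius maps move them by composition, and change of variables with $F^*\hyparea=\hyparea$ gives the invariance of $I_\Phi$, of $\lambda_\vartheta$ (your case $\Phi=\mathbf{1}_{(t,\infty)}$) and of the joint pushforward. The layer cake is Tonelli, and the gradient energies follow from $|\nabla(u\circ F)|=|F'|\,|\nabla u|\circ F$ with the factor absorbed by Lemma~\ref{lem:hyp}.

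The one step that would fail is the extra one you add for the converse of ``equivalently''. You claim that $\lambda_\vartheta$ determines $\vartheta_*\hyparea$ on $(0,\infty)$. That is true only if $\lambda_\vartheta(t)<\infty$ for every $t>0$. Here $\hyparea(\D)=\infty$ and the class imposes no decay of $\vartheta$ at $\partial\D$, so $\lambda_\vartheta$ can be $+\infty$ on a whole interval. Once that happens, the rays $(t,\infty)$ no longer determine the measure below the threshold.

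Concretely, take $B_1=\sqrt3\,(1-|z|^2)^{-1}$, so $\vartheta_1\equiv\tfrac34$. Take a smooth $\vartheta_2$ with values in $[\tfrac14,\tfrac34]$, equal to $\tfrac34$ off a compact disk and to $\tfrac14$ near $0$, realized by Proposition~\ref{prop:realization}. Both are vortex-free and have the same distribution function: $\lambda=+\infty$ on $(0,\tfrac34)$ and $\lambda=0$ on $[\tfrac34,\infty)$. Yet they differ in $I_\Phi$ for $\Phi=\mathbf{1}_{(0,1/2)}$, which is $0$ for the first and positive for the second. The obstruction is therefore not only the zero set you identified.

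The informational equivalence needs $\lambda_\vartheta<\infty$ on $(0,\infty)$, in addition to $\Phi(0)=0$. This finiteness is automatic for bounded $B$, since then $\{\vartheta>t\}$ lies in a disk compactly contained in $\D$. The paper neither proves nor uses this converse; it obtains the invariance of $\lambda_\vartheta$ directly from $\{\vartheta'>t\}=F^{-1}(\{\vartheta>t\})$. So drop that step, or state it under the finiteness hypothesis; the rest of your proof stands.
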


\begin{proof}
Gauges fix $\vartheta$ and $\kappa$ pointwise
(Theorem~\ref{thm:equivariance}(i)). For $F \in \Mob$, the fields move
by composition, so $\{\vartheta' > t\} = F^{-1}(\{\vartheta > t\})$
and $\Phi(\vartheta') = \Phi(\vartheta) \circ F$; the change of
variables and $F^*\hyparea = \hyparea$ give the invariance of
$\lambda_\vartheta$, of $I_\Phi$, and of the joint pushforward. The
layer-cake identity is Fubini on
$\{(z, t) : 0 < t < \vartheta(z)\}$. Under the additional regularity
required for the gradient examples,
$|\nabla(u \circ F)|^2 = |F'|^2\,(|\nabla u|^2 \circ F)$ for conformal
$F$, and the factor $|F'|^2$ is absorbed by the hyperbolic weight
exactly as in the proof of Theorem~\ref{thm:equivariance}(ii).
\end{proof}

\begin{remark}[The profile is not complete]\label{rem:spectrum}
The family $I_\Phi$ retains precisely the hyperbolic statistics of
$\vartheta$ and forgets the arrangement: two densities equimeasurable
against $\hyparea$ but not M\"obius-related share every $I_\Phi$ ---
two equal bumps at different hyperbolic separations already do it,
M\"obius maps being hyperbolic isometries. Completeness resides in the
pair of \emph{fields} modulo $\Mob$ (Theorem~\ref{thm:completeness}),
not in any list of numbers; the functionals of
Proposition~\ref{prop:spectrum} are its shadows, of controlled shape,
of which the mass is the crudest.
\end{remark}

\medskip\noindent\textbf{The charge.} The phase component of the
moduli has a current-valued part of an entirely different type from a
moment of a function. On the vortex sector, where $\arg B$ ceases to
be single-valued, the single-valued survivor is the phase 1-form
$\eta = d\arg B$, and its derivative splits into two currents: the
exterior derivative $d\eta$, the \emph{charge current} --- on the tame
sector a purely atomic measure with mass $2\pi n_p$ at each vortex ---
and the co-derivative $d{\star}\eta$, which extends $K$ across the
vortices. On the tame sector Theorem~\ref{thm:atoms} proves that the
latter has no atomic part; in the general phase-integrable sector both
objects are currents and may have singular components. Whenever the
numerical pseudo-analytic charge is defined --- in particular when the
zero set is compactly contained --- it is the normalized total flux
of $d\eta$; on the tame sector this is equivalently the total mass of
the atomic charge measure. The second current is the extended phase
curvature. On the vortex-free sector $\eta$ is exact,
$d\eta = 0$, and $d{\star}\eta = K$, consistently with the numerical
charge, when considered, vanishing there
(Remark~\ref{rem:deficit}).

\begin{remark}[Mass and charge as numerical projections]\label{rem:two-numbers}
The ledger closes. Of the complete moduli data, the mass retains one number
--- the zeroth moment of $\vartheta$ --- and is blind to every higher
moment, to the arrangement behind the profile, and to the whole of
$K$; whenever the numerical charge is defined, it retains the total
flux of $d\eta$ --- the sum of its flux atoms on the tame sector ---
and is blind to the whole of the phase curvature and of $\vartheta$.
The independence of mass and charge \cite[Thm.~6.1]{charge} is now structural rather than
constructed: the two numbers are numerical projections of distinct
components of the moduli --- on the vortex-free sector
Proposition~\ref{prop:realization} shows the pair
$(\vartheta, \kappa)$ to be unconstrained, and
Corollary~\ref{cor:fibers} realizes every value $(m, k)$ of the two
numbers jointly. And the fibers of $(\Mcal, n)$ are as large as
Corollary~\ref{cor:moduli} says: over a given mass, the entire
codimension-one level set $\{\int_\D \vartheta\,\hyparea = \Mcal\}$ of one
field, times the entirety of the other, modulo three parameters. The
next section converts this bookkeeping into explicit pairs of
inequivalent equations that mass and charge cannot separate.
\end{remark}

\section{Incompleteness of mass and charge}\label{sec:incompleteness}

The bookkeeping of Remark~\ref{rem:two-numbers} becomes concrete here:
pairs of inequivalent equations with equal mass and equal charge,
separated along each axis of the moduli independently, and a corollary
sizing the fibers of $(\Mcal, n)$. Each example is a pair of minimal
forms $w_{\bar z} = B\bar w$ on $\D$ --- members of the class of
Definition~\ref{def:class} in their own right --- and, by
Theorem~\ref{thm:reduction}, stands for the pair of full equivalence
classes of framed equations reducing to them.

\begin{example}[Equal mass, equal charge, distinct profile]\label{ex:spectrum}
Fix $t > 0$ and take
\[
B_1 \;=\; t, \qquad
B_2 \;=\; \sqrt{2}\,t\,\bigl(1 - |z|^2\bigr)^{1/2} .
\]
Both fields are smooth, positive, and zero-free on $\D$: both
equations are vortex-free, of charge $0$
(Remark~\ref{rem:deficit}), and of identically vanishing phase
curvature --- $\arg B_i \equiv 0$ is harmonic, so
$\kappa_1 = \kappa_2 \equiv 0$ and the entire phase data of the two
equations coincide. The masses coincide as well:
\[
\Mcal_1 = \int_\D t^2\, dx\,dy = \pi t^2,
\qquad
\Mcal_2 = \int_\D 2t^2(1 - |z|^2)\, dx\,dy = \pi t^2 .
\]
But the second hyperbolic moment, the invariant
$I_{\Phi}$ of Proposition~\ref{prop:spectrum} at $\Phi(s) = s^2$,
separates them:
\[
I_{s^2}(B_1)
= \frac{1}{4}\int_\D t^4\,(1 - |z|^2)^2\, dx\,dy
= \frac{\pi t^4}{12},
\qquad
I_{s^2}(B_2)
= \int_\D t^4\,(1 - |z|^2)^4\, dx\,dy
= \frac{\pi t^4}{5} .
\]
The two equations are therefore inequivalent, and nothing but the
\emph{shape} of $\vartheta$ --- the same total hyperbolic mass,
distributed differently --- tells them apart: $\vartheta_1$ is
$\tfrac14 t^2(1-|z|^2)^2$ and $\vartheta_2$ is
$\tfrac12 t^2(1-|z|^2)^3$, equal in integral, distinct in profile, and
no M\"obius map carries one profile to the other because the moment
invariant forbids it without any case analysis.
\end{example}

\begin{example}[Identical $\vartheta$, distinct phase curvature]\label{ex:phase}
Take
\[
B_1 \;=\; 1, \qquad B_2 \;=\; e^{\,i x^2}, \qquad z = x + iy .
\]
Both fields are unimodular: $\vartheta_1 = \vartheta_2 =
\tfrac14(1 - |z|^2)^2$ \emph{identically}, so the two equations share
the mass $\Mcal = \pi$, the entire hyperbolic distribution profile --- every
invariant $I_\Phi$ of Proposition~\ref{prop:spectrum}, every moment,
every level-set measure --- and, being vortex-free, the charge $0$.
Every metric invariant of this paper and of \cite{mass, framed} is
blind to the pair. The phase curvature is not:
$\theta_1 \equiv 0$ gives $\kappa_1 \equiv 0$, while
$\theta_2 = x^2$ gives $\Delta\theta_2 = 2$ and
\[
\kappa_2 \;=\; \tfrac12\,\bigl(1 - |z|^2\bigr)^2 \;>\; 0
\quad\text{on all of } \D .
\]
If the equations were equivalent, the necessity half of
Theorem~\ref{thm:completeness} would produce $F \in \Mob$ with
$\kappa_1 = \kappa_2 \circ F$ --- a function vanishing identically
equal to one vanishing nowhere. They are inequivalent, separated by
the phase alone.
\end{example}

\begin{remark}[The two axes]\label{rem:two-axes}
The examples are dual. Example~\ref{ex:spectrum} freezes the phase
data entirely ($\kappa \equiv 0$ for both) and moves only the shape of
$\vartheta$; Example~\ref{ex:phase} freezes $\vartheta$ entirely and
moves only $\kappa$. Together they realize, at the level of
counterexamples, the independence of the two fields that
Proposition~\ref{prop:realization} established at the level of the
moduli: the pair $(\Mcal, n)$ fails along each axis separately, not
merely along some diagonal. The independence theorem of
\cite[Thm.~6.1]{charge} scaled one number against the other; here each
\emph{field} is scaled against the other's invariants.
\end{remark}

\begin{example}[The vortex set itself]\label{ex:vortex-set}
The pair $(\Mcal, n)$ does not even register the crudest invariant
below the moduli. Let $h : [0, 1) \to [0, \infty)$ be smooth,
vanishing on $[0, \tfrac12]$ and positive on $(\tfrac12, 1)$, and set
$B_2(z) := c\,h(|z|)$ with $c > 0$ arranged so that
$\int_\D B_2^2 = \pi t^2$; take $B_1 = t$ as before. Both equations
have mass $\pi t^2$; both are admissible for the charge, with
$Z(B_2) = \overline{D(0, \tfrac12)} \Subset \D$, and both have charge
$0$ --- $B_2$ is real and nonnegative near $\partial\D$, so its
argument cannot wind. But $Z(B_1) = \emptyset$ and $Z(B_2)$ is a
closed disk, and the vortex set is an invariant of the class up to the
homeomorphisms of the class itself \cite[Prop.~4.2]{charge}: the
equations are inequivalent. Here neither field-invariant is needed;
$(\Mcal, n)$ fails before the moduli theory even begins.
\end{example}

\begin{corollary}[The fibers of $(\Mcal, n)$]\label{cor:fibers}
For every $(m, k) \in (0, \infty) \times \Z$, the fiber of
$(\Mcal, n)$ over $(m, k)$ contains an infinite-dimensional family of
pairwise inequivalent sourceless framed equations.
\end{corollary}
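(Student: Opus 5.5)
The plan is to build the family explicitly on $\D$ as minimal forms $w_{\bar z}=B\bar w$, which belong to the class of Definition~\ref{def:class} by Theorem~\ref{thm:reduction}. Mass and charge will be fixed by construction. Pairwise inequivalence will be certified only by invariants already available for \emph{every} sourceless equation, with or without vortices: the hyperbolic profile invariants $I_\Phi$ of Proposition~\ref{prop:spectrum}(i). This avoids any appeal to the vortex-sector completeness theorem, which is not yet proved at this point. Concretely, for $k\ge 0$ I take
\[
B_{c}(z)\;=\;\lambda_c\,z^{k}\,\rho_c(|z|),
\]
and for $k<0$ the same with $\bar z^{|k|}$ in place of $z^k$. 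Here $\rho_c>0$ is a smooth radial profile depending on a parameter $c=(c_1,\dots,c_J)$ ranging over an open subset of $\R^J$ with $J$ arbitrary, and $\lambda_c>0$ is the normalizing constant.

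\emph{Step 1 (charge).} The zero set is $\{0\}$ when $k\neq0$ and empty when $k=0$, so it is compactly contained and charge-admissible. Because $\rho_c$ is positive, $\arg B_c$ winds exactly like $z^k$ (respectively $\bar z^{|k|}$) on any circle $|z|=r$. Hence $n=k$, using the winding definition of \cite{charge}, or Remark~\ref{rem:deficit} when $k=0$.

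\emph{Step 2 (mass).} I take $\rho_c=1+\sum_j c_j\chi_j(|z|)$, where the $\chi_j$ are smooth bumps with disjoint supports in a fixed annulus $\{\tfrac12<|z|<\tfrac34\}$ and $|c|$ is small, so that $\rho_c>0$. Then $\int_\D|z|^{2|k|}\rho_c^2\,dx\,dy$ is finite and positive. Choosing $\lambda_c$ to make this integral times $\lambda_c^2$ equal to $m$ gives, by Proposition~\ref{prop:mass-moment}, $\Mcal(B_c)=\int_\D\vartheta_c\,\hyparea=m$.

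\emph{Step 3 (inequivalence).} By Proposition~\ref{prop:spectrum}(i), each moment $I_{s^p}(B_c)=\int_\D\vartheta_c^{\,p}\,\hyparea$ with $p=2,3,\dots$ is an invariant of the equivalence class. I will show that the map
\[
c\;\longmapsto\;\bigl(I_{s^2},\dots,I_{s^{J+1}}\bigr)(B_c)
\]
is injective on a small neighbourhood of a suitable base point. To do this I compute its Jacobian at $c=0$. Differentiating in $c_j$ gives $p\int_\D\vartheta_0^{\,p-1}\,\partial_{c_j}\vartheta_c\,\hyparea$. Here $\partial_{c_j}\vartheta_c$ is a multiple of $\vartheta_0\chi_j$, corrected by the normalization term coming from $\partial_{c_j}\lambda_c$. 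This is a generalized Vandermonde-type matrix in the radial variable. If the bumps $\chi_j$ sit where $\vartheta_0$ takes distinct values, the matrix should be nonsingular, and the inverse function theorem then yields injectivity. Pairwise inequivalence follows at once: equivalent equations share every $I_\Phi$. Since $J$ is arbitrary, the family has infinite dimension.

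The main obstacle is Step 3: the normalization $\lambda_c$ couples every parameter into the Jacobian, and the nondegeneracy must be checked in its presence. If this proves awkward, my fallback is to abandon local injectivity and separate the equations directly by the hyperbolic distribution function $\lambda_\vartheta$. I would arrange $\rho_c$ so that the $\vartheta_c$ have plateau-like pieces at heights encoded by $c$; the jump locations of $t\mapsto\lambda_{\vartheta_c}(t)$ then recover $c$. For $k=0$ there is a cheaper alternative: fix $B$ unimodular times a constant and vary only the phase, as in Example~\ref{ex:phase}, using the realization Proposition~\ref{prop:realization}(i) together with the joint-pushforward invariant of Proposition~\ref{prop:spectrum}(ii). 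The construction above, however, handles all $k$ uniformly.
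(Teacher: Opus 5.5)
Your route is different from the paper's: you vary the modulus and separate by the numerical moments $I_{s^p}$. Steps~1--2 are fine, and using $I_\Phi$ across vortices is legitimate, since only the $\vartheta$-half of the equivariance enters. Step~3, however, is not yet a proof, and the criterion you propose for it is false.

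Put $\nu:=\vartheta_0\,\hyparea$ (total mass $m$), $w_j:=\int_\D\chi_j\,d\nu$ and $\bar M_q:=\frac1m\int_\D\vartheta_0^{\,q}\,d\nu$. The normalization gives $\partial_{c_j}\lambda_c/\lambda_0=-w_j/m$ at $c=0$. Hence $\partial_{c_j}I_{s^p}=2p\bigl(\int_\D\vartheta_0^{\,p-1}\chi_j\,d\nu-w_j\bar M_{p-1}\bigr)$. In the narrow-bump limit this is a Vandermonde matrix in the nodes $s_j$ minus a rank-one term, and distinct nodes do not make it nonsingular.

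Already for $k=0$, $J=1$ (with $\pi t^2=m$) the single entry is $4\bigl(\int_\D\vartheta_0\chi_1\,d\nu-\tfrac{t^2}{12}\,w_1\bigr)$. This vanishes for a suitably placed bump near $|z|\approx0.65$, which is inside your annulus, and there the inverse function theorem gives nothing.

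To get the correct criterion, border the matrix with the column $(\bar M_q)_{q=0}^{J}$. The left kernel of the Vandermonde columns is spanned by the coefficients of $\prod_j(s-s_j)$, so in the limit the matrix is nonsingular if and only if $\int_\D\prod_{j=1}^{J}(\vartheta_0-s_j)\,\vartheta_0\,\hyparea\neq0$. This quantity is affine in $s_J$, with slope equal to minus the same quantity at level $J-1$; at level $0$ that quantity is $m>0$. So the nodes can be chosen inductively in the range of $\vartheta_0$ on the annulus, and nonsingularity survives small bump widths by continuity. Step~3 is therefore repairable, but it needs this argument, not the Vandermonde heuristic.

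Second, your closing sentence overreaches. For each $J$ you get a $J$-parameter family, injective only on a neighbourhood $U_J$ that may shrink as $J$ grows, and nothing compares members built for different $J$. That gives arbitrarily large finite-dimensional families, which is weaker than one infinite-dimensional family. To get a single family along your lines you would need infinitely many invariants at once, e.g.\ all of $\lambda_\vartheta$, rather than an inverse-function argument at each finite level.

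The paper avoids both problems by freezing the modulus and moving only the phase. It takes $B_\sigma=t\,e^{i\sigma}$ for $k=0$, and $t\,z^ke^{i\sigma}$ (resp.\ $t\,\bar z^{|k|}e^{i\sigma}$) for $k\neq0$. Here $\sigma$ is real, radial and compactly supported, in an annulus when $k\neq0$. The argument then runs as follows:
\begin{itemize}
\item The unimodular factor leaves $|B|$, hence the mass, untouched, so there is no normalization.
\item Any equivalence must be a rotation, pinned by the unique maximum of $\vartheta_0$ at $0$ (resp.\ by the vortex at $0$).
\item The necessity half of the equivariance, run on the punctured disk when $k\neq0$, yields $\Delta\sigma_1=\Delta\sigma_2$, hence $\sigma_1=\sigma_2$, since a compactly supported harmonic function vanishes.
\end{itemize}
This injects an infinite-dimensional vector space into the fiber with no Jacobian at all. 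What your route buys is invariants that are numbers, blind to $F$: no rigidity step is needed and the phase is never used. What it costs is the normalization coupling and, as written, only finite dimensionality.
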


\begin{proof}
\emph{Case $k = 0$.} Fix $t$ with $\pi t^2 = m$ and let
$\vartheta_0 := \tfrac14 t^2 (1 - |z|^2)^2$ be the density of
$B_1 = t$: radial, with unique maximum at $0$, so any $F \in \Mob$
with $\vartheta_0 \circ F = \vartheta_0$ fixes $0$ and is a rotation
--- and every rotation qualifies. For real
$\sigma \in C^{\infty}_c(\D)$ let $\Ecal_\sigma$ be the minimal form
$B_\sigma := t\,e^{i\sigma}$ --- smooth, a member of the class of
Definition~\ref{def:class} in its own right --- whose modulus is $t$
and whose phase curvature is
$\tfrac14(1-|z|^2)^2\Delta\sigma$, everything classical since $\sigma$
is smooth. Every $\Ecal_\sigma$ is vortex-free with mass
$m$ and charge $0$. By Theorem~\ref{thm:completeness},
$\Ecal_{\sigma_1} \sim \Ecal_{\sigma_2}$ forces
$\vartheta_0 = \vartheta_0 \circ F$ and hence $F$ a rotation, and then
$\Delta\sigma_1 = (\Delta\sigma_2) \circ F$. Since $\Delta$ is
injective on $C^\infty_c$ (a compactly supported harmonic function
vanishes). To obtain a literal injection, restrict to the
infinite-dimensional subspace of radial
$\sigma \in C^\infty_c(\D)$: rotations then fix both $\sigma$ and
$\Delta\sigma$, so equivalence forces $\sigma_1=\sigma_2$. Thus the
fiber contains an infinite-dimensional pairwise inequivalent
subfamily.

\emph{Case $k \neq 0$.} Take
$B_\sigma := t\,z^{k}\,e^{i\sigma}$ for $k > 0$ (and
$t\,\bar z^{\,|k|} e^{i\sigma}$ for $k < 0$), with real
$\sigma \in C^{\infty}_c(A)$ supported in the annulus
$A = \{\tfrac13 < |z| < \tfrac23\}$ and $t$ fixed by the mass, which
the unimodular factor $e^{i\sigma}$ does not alter. Each $B_\sigma$
has the single vortex $0$ of local charge $k$ --- the compactly
supported single-valued $\sigma$ moves no winding --- hence charge
$k$. Suppose $\Ecal_{\sigma_1} \sim \Ecal_{\sigma_2}$: the residual relation
\eqref{eq:orbit} sends vortex set to vortex set, so $F(0) = 0$ and $F$
is a rotation. On $\D \setminus \{0\}$ every identity in the proof of
Theorem~\ref{thm:equivariance} is local --- it needs only a local
continuous branch of the argument --- so the phase curvature, as a
field on the punctured disk where
$\arg B_\sigma = k\arg z + \sigma$ locally and $k \arg z$ is locally
harmonic, satisfies
$\tfrac14(1-|z|^2)^2\Delta\sigma_1 = \bigl(\tfrac14(1-|z|^2)^2\Delta\sigma_2\bigr) \circ F$
there, everything again classical for smooth $\sigma$. As before, restrict to the infinite-dimensional subspace of radial
$\sigma \in C^\infty_c(A)$. Rotations act trivially there, and
injectivity of $\Delta$ on $C^\infty_c(A)$ makes
$\sigma \mapsto [\Ecal_\sigma]$ injective. Hence the fiber again
contains an infinite-dimensional pairwise inequivalent subfamily.
\end{proof}

The three mechanisms of this section --- the vortex set, the
hyperbolic distribution profile, the phase curvature --- are ordered by how much
of the moduli they read, and the first already predates this paper
\cite{charge}. Each defeats $(\Mcal, n)$ on its own. What none of them
individually achieves is separation of everything, which is
Theorem~\ref{thm:completeness}'s job off the vortices; the next
section extends it across them, as far as the phase
form remains integrable, and locates the charge's irreducible place
in the ledger.

\section{The vortex sector}\label{sec:vortices}

At a vortex the argument of $B$ ceases to be single-valued, and with
it the phase curvature of Definition~\ref{def:pair}. What survives,
single-valued, is the \emph{differential} of the argument, and the
correct objects of the vortex sector are its two exterior
derivatives: one carries the charge and, on the tame sector, is purely
atomic; the other extends the curvature and, again on the tame sector,
has no atoms. In the general phase-integrable sector both are currents
whose singular parts need not be atomic. This section constructs
both currents, proves the split, and extends the completeness theorem
across the vortices --- to the exact frontier where the phase form
remains integrable. The sufficiency half is a distributional
div--curl argument, a planar case of Hodge theory for currents
\cite{derham}: a closed and co-closed locally integrable 1-form is
smooth and harmonic, and on the disk a harmonic 1-form is the
differential of a gauge phase.

\begin{definition}[Phase form; integrability; tameness]\label{def:phase-form}
Let $B$ be a minimal field with $Z(B)$ closed, Lebesgue-null, and
with connected complement, and let $B \in C^1(\D \setminus Z(B))$. The
\emph{phase form} of $B$ is the real 1-form
\[
\eta \;:=\; \operatorname{Im}\frac{dB}{B}
\qquad \text{on } \D \setminus Z(B),
\]
manifestly single-valued, and equal on each subdomain admitting a
continuous branch to $d\arg B$; in particular $\eta$ is closed on
$\D \setminus Z(B)$. The field is \emph{phase-integrable} if
$\eta \in L^1_{\mathrm{loc}}(\D)$; the two currents of the phase are
then
\[
\langle d\eta, \psi \rangle \;:=\; -\int_\D d\psi \wedge \eta,
\qquad
\langle d{\star}\eta, \psi \rangle \;:=\; -\int_\D \langle \eta, d\psi \rangle\; dx\,dy
\;=\; -\int_\D (P\psi_x + Q\psi_y)\; dx\,dy,
\]
for $\psi \in C^\infty_c(\D)$, where $\eta = P\,dx + Q\,dy$ and
$\star dx = dy$, $\star dy = -dx$. On the vortex-free $C^2$ sector,
$\eta = d\theta$ is exact, $d\eta = 0$, and
$d{\star}\eta = \Delta\theta\,dx\,dy = K$ recovers
Remark~\ref{rem:currents}. The field is \emph{tame} if $Z(B)$ is
finite and near each $p \in Z(B)$
\[
B \;=\; \tau_p\; g_p, \qquad
\tau_p(z) :=
\begin{cases}
(z - p)^{n_p}, & n_p \ge 1,\\
(\bar z - \bar p)^{|n_p|}, & n_p \le -1,
\end{cases}
\]
with $g_p$ zero-free and $C^1$; the integer $n_p$ is then the local
charge of $p$ in the sense of \cite{charge}, the winding of $B$ along
a small circle being $n_p$ plus the winding of the zero-free $g_p$,
which vanishes. Tame fields are phase-integrable:
$\eta = n_p\, d\varphi_p + \operatorname{Im}(dg_p/g_p)$ near $p$,
where $\varphi_p := \arg(z - p)$, so $|\eta| = O(|z - p|^{-1})$.
\end{definition}

Both pairings agree with the classical objects on smooth data: for
$\eta = du$ with $u \in C^2$, Stokes gives
$\langle d\eta, \psi \rangle = 0$ and integration by parts gives
$\langle d{\star}\eta, \psi \rangle = \int (\Delta u)\,\psi$. A word
on the name: $d{\star}\eta$ is a 2-form-valued current --- the
divergence current of the phase form --- related to but not literally
the codifferential $\delta\eta$, which is a function; we write
$d{\star}\eta$ throughout because the definition above is the object
actually used, and call it the co-derivative only in this displayed
sense. As with
the $C^2$ of Definition~\ref{def:pair}, the $C^1$ regularity off the
zero set exceeds the bare class of Definition~\ref{def:class} and is
supplied by smoother data, per Remark~\ref{rem:distributional}. The
covariance of the currents costs nothing and holds at full
phase-integrable generality.

\begin{lemma}[Covariance of the currents]\label{lem:covariance}
Phase-integrability is preserved by the residual groupoid, and:
\begin{itemize}
\item[(i)] under a holomorphic gauge, $\eta' = \eta - 2\,d(\arg\varphi)$,
and both currents are unchanged;
\item[(ii)] under $F \in \Mob$, $\tilde\eta = F^*\eta - d(\arg F')$,
and both currents pull back:
\[
d\tilde\eta \;=\; F^*(d\eta), \qquad
d{\star}\tilde\eta \;=\; F^*(d{\star}\eta).
\]
\end{itemize}
\end{lemma}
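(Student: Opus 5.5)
The plan is to compute the transformed phase forms pointwise off the zero set, and then pass to the currents by testing against compactly supported functions. The only analytic input is that the correction terms are differentials of smooth harmonic functions on all of $\D$.

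For (i), the gauged field is $B' = (\bar\varphi/\varphi)B$ with $\varphi$ holomorphic and zero-free. It has the same zero set as $B$, and off that set
\[
\frac{dB'}{B'} = \frac{dB}{B} + \frac{d\bar\varphi}{\bar\varphi} - \frac{d\varphi}{\varphi}.
\]
Taking imaginary parts gives $\eta' = \eta - 2\,d(\arg\varphi)$, where $\arg\varphi = \operatorname{Im}\log\varphi$ is a smooth harmonic function on all of $\D$. Local integrability is therefore preserved. For the currents I set $u := \arg\varphi$. Then $\langle d(du),\psi\rangle = -\int d\psi\wedge du = 0$ by Stokes, since $d\psi\wedge du = d(\psi\,du)$ and $\psi$ has compact support. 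Likewise $\langle d{\star}du,\psi\rangle = \int (\Delta u)\,\psi = 0$, which is the smooth-data identity recorded after Definition~\ref{def:phase-form}.

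For (ii), I write $\tilde B = \overline{F'}\,(B\circ F)$. Its zero set is $F^{-1}(Z(B))$, which is again closed, null, and has connected complement, because $F$ is a diffeomorphism of $\D$. Off this set,
\[
\operatorname{Im}\frac{d\tilde B}{\tilde B} = F^*\eta + \operatorname{Im}\frac{d\overline{F'}}{\overline{F'}} = F^*\eta - d(\arg F'),
\]
and $\arg F'$ is harmonic on $\D$ by Lemma~\ref{lem:conf-lap}. Local integrability of $F^*\eta$ follows from the change of variables on compacta, since $F'$ is bounded above and below on compact sets. By the argument of part (i), the term $d(\arg F')$ contributes nothing to either current. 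It therefore remains to show that $d(F^*\eta) = F^*(d\eta)$ and $d{\star}(F^*\eta) = F^*(d{\star}\eta)$ as currents, where pullback of a 2-current means $\langle F^*T,\psi\rangle := \langle T,\psi\circ F^{-1}\rangle$, as in Remark~\ref{rem:distributional}.

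For $d$ this is naturality. Writing $G := F^{-1}$,
\[
\int_\D d\psi\wedge F^*\eta = \int_\D F^*\bigl(d(\psi\circ G)\wedge\eta\bigr) = \int_\D d(\psi\circ G)\wedge\eta,
\]
since $F$ is orientation-preserving and the pullback of an integrable 2-form integrates to the same value. For $d{\star}$ the key fact is that a conformal map commutes with the Hodge star on 1-forms, that is, $F^*{\star} = {\star}F^*$. Then $\langle \eta, d\phi\rangle\,dx\,dy = d\phi\wedge{\star}\eta$, and the same change-of-variables computation applies to ${\star}\eta$ in place of $\eta$.

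I expect the main obstacle to be justifying these manipulations with merely $L^1_{\mathrm{loc}}$ forms, for which Stokes is unavailable. I would avoid Stokes entirely. Both pairings are integrals of $\psi$-dependent smooth forms wedged against $\eta$ or ${\star}\eta$, and only the change-of-variables formula for integrable 2-forms under a diffeomorphism is used. The identity $F^*{\star} = {\star}F^*$ is purely pointwise linear algebra: $dF$ is a rotation times a dilation, and the star commutes with both.
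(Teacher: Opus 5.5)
Your proof is correct and follows the paper's own route. You compute the imaginary parts of the logarithmic derivatives to get the shifts $-2\,d(\arg\varphi)$ and $-d(\arg F')$, kill these smooth exact harmonic-potential terms in both currents, and obtain the M\"obius pullback laws from the naturality of $d$ together with the conformal invariance ${\star}F^* = F^*{\star}$. Your explicit change-of-variables check at $L^1_{\mathrm{loc}}$ regularity, and your check of the zero-set hypotheses, simply spell out what the paper asserts in a line.
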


\begin{proof}
(i) $B' = (\bar\varphi/\varphi)B$ gives
$dB'/B' = dB/B + d\log\bar\varphi - d\log\varphi$, whose imaginary
part is $\eta - 2\,d(\arg\varphi)$; the shift is a smooth exact form
with harmonic potential, so its $d$-current vanishes by Stokes and its
$d{\star}$-current is $\Delta(\arg\varphi)\,dx\,dy = 0$
(Lemma~\ref{lem:conf-lap}). (ii) $\tilde B = \overline{F'}\,(B \circ F)$
gives
$d\tilde B/\tilde B = F^*(dB/B) + d\log\overline{F'}$, whose imaginary
part is $F^*\eta - d(\arg F')$, again an $L^1_{\mathrm{loc}}$ form
plus a smooth exact form with harmonic potential. The exterior
derivative of currents commutes with the diffeomorphism pullback, so
$d\tilde\eta = F^*(d\eta)$; and since the Hodge star on 1-forms in two
dimensions is invariant under orientation-preserving conformal maps,
${\star}F^*\eta = F^*({\star}\eta)$, whence
$d{\star}\tilde\eta = F^*(d{\star}\eta) - \Delta(\arg F')\,dx\,dy
= F^*(d{\star}\eta)$.
\end{proof}

\begin{theorem}[The two curvatures of the phase]\label{thm:atoms}
Let $B$ be tame. Then, as currents on $\D$:
\begin{itemize}
\item[(i)] the exterior derivative is purely atomic, with the local
charges as its fluxes:
\[
d\eta \;=\; 2\pi \sum_{p \in Z(B)} n_p\; \delta_p\,;
\]
the \emph{charge measure} $C := \tfrac{1}{2\pi}\,d\eta$ has total mass
the pseudo-analytic charge, $C(\D) = \sum_p n_p = n$;
\item[(ii)] the co-derivative carries no atoms: near each
$p$, $\;d{\star}\eta = d{\star}\operatorname{Im}(dg_p/g_p)$ is the
distributional Laplacian of the single-valued continuous potential
$\arg g_p$, so that
$\langle d{\star}\eta,\; \psi((\cdot - p)/\varepsilon) \rangle \to 0$
as $\varepsilon \to 0$ for every $\psi \in C^\infty_c$ --- the
concentration test that in (i) detects the atoms --- and away from
$Z(B)$ it is $\Delta\theta\,dx\,dy$. If moreover
$B \in C^2(\D \setminus Z(B))$ and every $g_p \in C^2$ on its full
neighborhood of $p$ --- the first hypothesis making $\Delta\theta$
classical off the vortices, the second continuous across them ---
then
$K := d{\star}\eta$ is a continuous 2-form on all of $\D$ and
$\kappa := \tfrac14(1-|z|^2)^2\,\Delta\theta$ extends continuously
across the vortices.
\end{itemize}
In particular the two currents are mutually blind: $d\eta$ reads
exactly the local charges, $d{\star}\eta$ exactly the curvature, and
neither sees the other.
\end{theorem}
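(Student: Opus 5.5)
The proof is local, so I will localize by a partition of unity. Because $Z(B)$ is finite, I choose a smooth partition $1=\chi_0+\sum_p\chi_p$ on $\D$. Each $\chi_p$ is supported in a small disk $D_p$ around $p$ on which the tame factorization $B=\tau_p g_p$ holds, and $\chi_0$ is supported away from $Z(B)$. Both currents are linear in $\psi$, so it suffices to evaluate them on test functions supported in $D_p$ or away from the vortices.

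Away from $Z(B)$, on a simply connected piece of the support, $\eta=d\theta$ for a local $C^1$ branch $\theta$. There Stokes gives $\langle d\eta,\psi\rangle=0$, and the pairing $\langle d{\star}\eta,\psi\rangle$ is $\int\theta\,\Delta\psi$, which is $\Delta\theta\,dx\,dy$ whenever $\theta\in C^2$.

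Near $p$ I write $\eta=n_p\,d\varphi_p+\operatorname{Im}(dg_p/g_p)$. This holds for $n_p<0$ as well, since $\arg(\bar z-\bar p)^{|n_p|}=-|n_p|\varphi_p=n_p\varphi_p$. I treat the two summands separately.

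\emph{The summand $\operatorname{Im}(dg_p/g_p)$.} Since $g_p$ is zero-free and $C^1$ on the disk $D_p$, it has a single-valued continuous branch $\arg g_p\in C^1(D_p)$, and the summand equals $d\arg g_p$. Its $d$-current vanishes by Stokes. Its $d{\star}$-current is $\int(\arg g_p)\Delta\psi$, the distributional Laplacian of a continuous potential.

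\emph{The summand $n_p\,d\varphi_p$.} This is the classical computation. Excise a disk of radius $\varepsilon$ about $p$. Stokes on the annulus gives
\[
-\int d\psi\wedge d\varphi_p=\lim_{\varepsilon\to0}\oint_{|z-p|=\varepsilon}\psi\,d\varphi_p=2\pi\psi(p),
\]
up to the orientation convention fixed by the definition of $\langle d\eta,\psi\rangle$, which I will check carefully. The sign ensures the flux of $\tau_p$ equals $n_p$. The co-derivative of $d\varphi_p$ vanishes, because $\varphi_p$ is the conjugate of the harmonic $\log|z-p|$. In polar form, $\langle d\varphi_p,d\psi\rangle=r^{-2}\psi_\phi$, and its integral over each circle is zero, so the pairing is $0$ exactly, with no boundary term; in fact ${\star}d\varphi_p=-d\log|z-p|$ up to sign, and $d$ of that is a closed integrable form.

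Summing these contributions gives (i). Because $d\eta$ is a finite sum of Dirac masses, $C(\D)=\sum_p n_p$, and this equals the charge $n$ by the additivity of winding from \cite{charge}: a large circle enclosing $Z(B)$ carries winding $\sum_p n_p$.

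For (ii), the concentration test follows from the formula above. With $\psi_\varepsilon:=\psi((\cdot-p)/\varepsilon)$ and $u:=\arg g_p$, I will avoid second derivatives of $u$ and integrate by parts only once:
\[
\Bigl|\int\langle du,d\psi_\varepsilon\rangle\Bigr|\le\|\nabla u\|_{L^\infty}\,\varepsilon^{-1}\|\nabla\psi\|_{L^\infty}\cdot O(\varepsilon^2)=O(\varepsilon)\to0.
\]
Under the $C^2$ hypotheses, $u\in C^2(D_p)$. Then $d{\star}\eta=\Delta u\,dx\,dy$ near $p$ is continuous, and it agrees with $\Delta\theta$ off $p$ because $n_p\varphi_p$ is locally harmonic there. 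So $K$ is a continuous 2-form, and $\kappa$ extends continuously. The mutual blindness is then read off: the charge lives entirely in the $d\varphi_p$ pieces, which contribute nothing to $d{\star}\eta$, and the curvature lives entirely in the $g_p$ pieces, which contribute nothing to $d\eta$.

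The main obstacle is bookkeeping rather than analysis. Two points need care: the orientation signs in $\langle d\eta,\psi\rangle$ (I expect to need to verify that the stated sign convention produces $+2\pi n_p$ and not $-2\pi n_p$), and the vanishing of the $d{\star}d\varphi_p$ contribution. For the latter I will use the exact polar computation rather than a limiting argument, so that no principal-value subtlety arises.
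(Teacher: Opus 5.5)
Your proposal is correct and follows essentially the paper's route: the same split $\eta = n_p\,d\varphi_p + d\arg g_p$ near each vortex, the same polar computations placing the atom $2\pi\delta_p$ in $d\eta$ and nothing in $d{\star}\eta$, and the same appeal to the companion charge paper's localization for $\sum_p n_p = n$, with your partition of unity doing the work of the paper's global continuous remainder $\zeta = \eta - \sum_p n_p\,d\varphi_p$. The sign you flagged does come out as $+2\pi\psi(p)$, because the inner circle of the excised region carries the clockwise induced orientation. Your one-integration-by-parts $O(\varepsilon)$ bound in the concentration test uses $\arg g_p \in C^1$ and is a valid substitute for the paper's dominated-convergence argument, which needs only continuity of $\arg g_p$.
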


\begin{proof}
Write $\zeta := \eta - \sum_p n_p\,d\varphi_p$ on
$\D \setminus Z(B)$: near each $p$,
$\zeta = \operatorname{Im}(dg_p/g_p) - \sum_{q \neq p} n_q\,d\varphi_q$,
the first term continuous ($g_p$ zero-free $C^1$) and the finite sum
smooth near $p$ --- each $\varphi_q$, $q \neq p$, being there a
smooth harmonic potential --- so $\zeta$ extends continuously across
$Z(B)$, and $\zeta$ is closed on $\D \setminus Z(B)$.

\emph{The model vortex.} In polar coordinates $(\rho, \alpha)$
centered at $p$, $d\varphi_p = d\alpha$, and for
$\psi \in C^\infty_c(\D)$, extended by zero to $\C$,
\[
\langle d(d\varphi_p), \psi \rangle
= -\int d\psi \wedge d\alpha
= -\int_0^{2\pi}\!\!\int_0^\infty (\partial_\rho\psi)\; d\rho\, d\alpha
= 2\pi\,\psi(p),
\]
since $d\psi \wedge d\alpha = (\partial_\rho\psi)\,d\rho \wedge d\alpha$
and each radial integral telescopes to $-\psi(p)$; while
\[
\langle d{\star}(d\varphi_p), \psi \rangle
= -\int \nabla\varphi_p \cdot \nabla\psi
= -\int_0^\infty\!\!\int_0^{2\pi} \frac{\partial_\alpha \psi}{\rho^2}\;\rho\, d\alpha\, d\rho
= 0,
\]
each angular integral vanishing for the single-valued $\psi$. The
model vortex thus deposits its entire content --- the atom
$2\pi\,\delta_p$ --- in the exterior derivative, and nothing in the
co-derivative.

\emph{The continuous remainder.} $\zeta$ is continuous on $\D$ and
closed off the finite $Z(B)$, so $d\zeta = 0$ as a current: excising
disks $D_\varepsilon(p)$ and applying Stokes on the complement,
$\int_{\D \setminus \cup D_\varepsilon} d\psi \wedge \zeta$ reduces to
boundary terms $\oint_{\partial D_\varepsilon} \psi\,\zeta$, which
vanish as $\varepsilon \to 0$ since $\zeta$ is bounded near each $p$
and the circles have shrinking length. And
$d{\star}\zeta$ is, near $p$, the distributional Laplacian of the
$C^1$ potential $\arg g_p$ --- a genuine distributional Laplacian of a
single-valued function, with no excision subtlety, the smooth terms
$-\sum_{q \neq p} n_q\,d\varphi_q$ contributing nothing, their
potentials being harmonic near $p$ --- and away from
$Z(B)$ it is $\Delta\theta\,dx\,dy$. The distributional
Laplacian of a continuous $u$ passes the concentration test: with
$\psi_\varepsilon := \psi((\cdot - p)/\varepsilon)$,
\[
\langle \Delta u, \psi_\varepsilon \rangle
= \int u\,\Delta\psi_\varepsilon
= \int u(p + \varepsilon w)\,(\Delta\psi)(w)\; dw
\;\longrightarrow\; u(p)\int \Delta\psi = 0,
\]
by dominated convergence and $\int\Delta\psi = 0$; this is the claimed
absence of atoms in (ii), the model vortices contributing nothing to
$d{\star}\eta$ by the computation above. Adding the model
vortices to the remainder gives (i) and (ii); under the strengthened
hypotheses, $\Delta\arg g_p$ is continuous near $p$ ($g_p \in C^2$)
and $\Delta\theta$ is continuous off $Z(B)$
($B \in C^2(\D \setminus Z(B))$), the two matching where both are
defined, giving the continuous extension. The identification of the
total atomic mass with the charge is the localization
$n = \sum_p n_p$ of \cite[Prop.~4.1]{charge}, tame equations being
admissible since the finite $Z(B) \Subset \D$.
\end{proof}

\begin{corollary}[The charge is the flux of the phase form]\label{cor:flux}
For tame $B$ and any $\psi \in C^\infty_c(\D)$ with $\psi \equiv 1$ on
a neighborhood of $Z(B)$,
\[
n \;=\; \frac{1}{2\pi}\,\langle d\eta, \psi \rangle .
\]
This is the statement promised in Section~\ref{sec:massandcharge}:
the charge consumes the atoms of $d\eta$ and nothing else; the
curvature current $d{\star}\eta$ contributes nothing to it, and it
contributes nothing to the curvature.
\end{corollary}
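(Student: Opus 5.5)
The plan is to read the identity directly off Theorem~\ref{thm:atoms}(i), testing the atomic current $d\eta = 2\pi\sum_p n_p\,\delta_p$ against the given $\psi$. Since $\psi \in C^\infty_c(\D)$ is an admissible test function, the pairing is
\[
\langle d\eta, \psi \rangle \;=\; 2\pi \sum_{p \in Z(B)} n_p\,\psi(p).
\]
The hypothesis $\psi \equiv 1$ on a neighbourhood of $Z(B)$ then makes every $\psi(p)$ equal to $1$. The finite sum therefore collapses to $2\pi\sum_p n_p$. By the last clause of Theorem~\ref{thm:atoms}(i), which is the localization $n = \sum_p n_p$ of \cite[Prop.~4.1]{charge}, this equals $2\pi n$. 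Dividing by $2\pi$ gives the displayed formula.

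A few points need checking. The existence of such a $\psi$ is automatic, because a tame $Z(B)$ is finite, hence compact in $\D$, and a smooth cutoff equal to $1$ near it exists. The answer does not depend on the choice of $\psi$: two admissible cutoffs differ by a test function vanishing near $Z(B)$, and $d\eta$ is supported on $Z(B)$, so the difference pairs to zero. Only the values of $\psi$ at the vortices matter, which makes the neighbourhood hypothesis stronger than strictly needed. Requiring a neighbourhood rather than just the points is harmless, and it is what one wants for the non-atomic companion statement.

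The interpretive sentences are the statement's second claim. That the curvature current contributes nothing to the charge follows because the formula involves only $d\eta$, with $d{\star}\eta$ not entering. That the charge contributes nothing to the curvature is Theorem~\ref{thm:atoms}(ii): the model vortex pieces $n_p\,d\varphi_p$ have vanishing $d{\star}$-current, so $d{\star}\eta$ is locally the Laplacian of the continuous potentials $\arg g_p$. The only step that is more than bookkeeping is confirming that the current $d\eta$ of Definition~\ref{def:phase-form} is the one whose atoms Theorem~\ref{thm:atoms} computed, with the same sign convention. That is immediate, since the proof of Theorem~\ref{thm:atoms} uses exactly that pairing.
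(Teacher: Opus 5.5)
Your proposal is correct and is essentially the argument the paper intends. The paper gives no separate proof: the corollary is read off Theorem~\ref{thm:atoms}(i) by pairing $d\eta = 2\pi\sum_p n_p\,\delta_p$ against $\psi$ and using the localization $n=\sum_p n_p$, with the interpretive clauses resting on Theorem~\ref{thm:atoms}(ii), exactly as you do.
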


\begin{theorem}[Completeness across the vortices]\label{thm:tame-complete}
Let $\Ecal_1, \Ecal_2$ be sourceless framed equations whose minimal forms
$B_1, B_2$ are phase-integrable per
Definition~\ref{def:phase-form}. Then $\Ecal_1$ and $\Ecal_2$ are equivalent
if and only if there is $F \in \Mob$ with
\[
\vartheta_1 = \vartheta_2 \circ F,
\qquad
d\eta_1 = F^*(d\eta_2),
\qquad
d{\star}\eta_1 = F^*(d{\star}\eta_2).
\]
On the tame sector the middle condition reads: equal charge measures,
$C_1 = F^* C_2$.
\end{theorem}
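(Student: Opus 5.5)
The plan follows the vortex-free proof of Theorem~\ref{thm:completeness}, with Weyl's lemma replaced by a div--curl (Hodge) argument for $L^1_{\mathrm{loc}}$ forms. Both directions reduce at once to the minimal forms $B_1, B_2$, by Theorem~\ref{thm:reduction}.

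\emph{Necessity.} As in Theorem~\ref{thm:completeness}, any equivalence of minimal forms collapses to a single holomorphic gauge followed by a single $F \in \Mob$ (Proposition~\ref{prop:residual}, together with the normalization of gauges by M\"obius maps). Gauges fix $\vartheta$ and, by Lemma~\ref{lem:covariance}(i), both currents. The M\"obius map moves $\vartheta$ by composition (Theorem~\ref{thm:equivariance}(ii), whose modulus computation needs no zero-freeness) and pulls back both currents by Lemma~\ref{lem:covariance}(ii). On the tame sector, Theorem~\ref{thm:atoms}(i) translates the middle condition into $C_1 = F^*C_2$.

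\emph{Sufficiency.} First replace $B_2$ by its pullback $\tilde B_2 = \overline{F'}(B_2\circ F)$. This is phase-integrable by Lemma~\ref{lem:covariance}, its vortex set $F^{-1}(Z(B_2))$ is again closed, null, with connected complement, and its triple equals that of $B_1$. Matching $\vartheta$ gives $|\tilde B_2| = |B_1|$, hence $Z := Z(B_1) = Z(\tilde B_2)$. Set $\omega := \eta_1 - \tilde\eta_2 \in L^1_{\mathrm{loc}}(\D)$. By hypothesis $d\omega = 0$ and $d{\star}\omega = 0$ as currents, so in coordinates $\omega = P\,dx + Q\,dy$ satisfies $P_y - Q_x = 0$ and $P_x + Q_y = 0$ distributionally. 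Hence $f := P - iQ$ satisfies $\bar\partial f = 0$ in $\mathcal{D}'(\D)$, and by Weyl's lemma for $\bar\partial$ it is holomorphic. Thus $\omega$ is a smooth harmonic 1-form, closed on the simply connected $\D$, and so $\omega = dh$ with $h$ harmonic on $\D$.

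It remains to show that $B_1$ and $\tilde B_2 e^{ih}$ coincide. Off $Z$ the quotient $u := B_1/\tilde B_2$ is unimodular and $C^1$, with $\operatorname{Im}(du/u) = \omega = dh$. Therefore $d(u e^{-ih}) = 0$ on $\D\setminus Z$, which is connected by the standing hypothesis, so $u = c\,e^{ih}$ for a unimodular constant $c$. Absorb $c$ into $h$. Then choose holomorphic $g$ with $\operatorname{Im} g = -h/2$ and gauge by $\varphi = e^{g}$, as in Theorem~\ref{thm:completeness}. This carries $\tilde B_2$ to a field agreeing with $B_1$ off $Z$. On $Z$ both fields vanish, so the agreement holds everywhere, and the chain of equivalences closes.

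The main obstacle I expect is the middle step: justifying that an $L^1_{\mathrm{loc}}$ form which is distributionally closed and co-closed is harmonic, and that the currents of $\eta_i$ on all of $\D$ correctly encode behaviour across $Z$. The second point matters because $\eta_i$ is only defined off the null set $Z$. The point to check is that the pairings in Definition~\ref{def:phase-form} are integrals over $\D$ of $L^1_{\mathrm{loc}}$ forms, so the null $Z$ does not affect them, and the elliptic-regularity argument via $\bar\partial f = 0$ needs nothing more. A secondary point is where connectedness of $\D\setminus Z$ is spent: it is used exactly to make $u e^{-ih}$ constant. Without it, different components could carry different constants, which is the open problem flagged in the introduction.
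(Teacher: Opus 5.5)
Your proposal is correct and follows the paper's proof essentially step for step. You pull $B_2$ back by $F$, match moduli via $\vartheta$, make the closed and co-closed $L^1_{\mathrm{loc}}$ difference $\eta_1-\tilde\eta_2$ into a holomorphic $f=P-iQ$ by Weyl's lemma, and then use the unimodular quotient $B_1/\tilde B_2$ (the paper's $E$) on the connected set $\D\setminus Z$ to get $B_1=e^{i(h+c)}\tilde B_2$ with harmonic phase, hence a holomorphic gauge; your added remarks (the vortex set $F^{-1}(Z(B_2))$, the modulus equivariance needing no zero-freeness, connectedness spent exactly on the constant) only make explicit what the paper leaves implicit.
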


\begin{proof}
Necessity is Lemma~\ref{lem:covariance} together with the
equivariance of $\vartheta$ (Theorem~\ref{thm:equivariance}) and the
gauge--M\"obius normal form of residual morphisms
(proof of Theorem~\ref{thm:completeness}).

Sufficiency. Pull $B_2$ back: $\tilde B_2 := \overline{F'}(B_2 \circ F)$
is phase-integrable with
$(\vartheta, d\eta, d{\star}\eta)$-data equal to that of $B_1$
(Lemma~\ref{lem:covariance}), and $|\tilde B_2| = |B_1|$ follows from
the $\vartheta$-match as in Theorem~\ref{thm:completeness}; in
particular $Z(\tilde B_2) = Z(B_1) =: Z$, closed, null, with connected
complement.

Set $\alpha := \eta_1 - \tilde\eta_2 \in L^1_{\mathrm{loc}}(\D)$,
with $d\alpha = 0$ and $d{\star}\alpha = 0$ as currents. Writing
$\alpha = P\,dx + Q\,dy$, these say, distributionally,
$Q_x - P_y = 0$ and $P_x + Q_y = 0$: the Cauchy--Riemann system for
$f := P - iQ$, i.e.\ $\bar\partial f = 0$ in
$\mathcal{D}'(\D)$. By Weyl's lemma for $\bar\partial$, $f$ is
holomorphic; hence $\alpha$ is a smooth 1-form, closed and co-closed
--- the elliptic regularity of harmonic currents in the plane
\cite{derham} --- and on the simply connected $\D$ it is exact with
harmonic potential:
$\alpha = dh$, $\Delta h = 0$.

Now compare the fields directly through the single-valued unimodular
quotient
\[
E \;:=\; \frac{B_1\,\overline{\tilde B_2}}{|B_1|\,|\tilde B_2|}
\qquad \text{on } \D \setminus Z,
\]
which satisfies
$\operatorname{Im}(dE/E) = \eta_1 - \tilde\eta_2 = dh$ and
$|E| \equiv 1$, so $d\bigl(E\,e^{-ih}\bigr) = 0$: on the connected
$\D \setminus Z$, $E = e^{i(h + c)}$ for a single real constant $c$.
Hence $B_1 = e^{i(h + c)}\,\tilde B_2$ off $Z$, and on $Z$ both sides
vanish, so the identity holds on $\D$. As in
Theorem~\ref{thm:completeness}, $h + c$ is harmonic on the simply
connected $\D$, hence the phase $-2\arg\varphi$ of a holomorphic
zero-free gauge, and $B_1$ is gauge-equivalent to $\tilde B_2$, hence
equivalent to $B_2$ and $\Ecal_2$.
\end{proof}

\begin{example}[The charge measure is irreducible]\label{ex:zbar}
The triple cannot be thinned. On $\D$ take
\[
B_1 = z, \qquad B_2 = \bar z :
\]
both tame with the single vortex $0$, of local charges $+1$ and $-1$.
The moduli coincide, $|B_1| = |B_2| = |z|$, so
$\vartheta_1 = \vartheta_2$ --- every metric invariant agrees, and the
vanishing order at the vortex, which $\vartheta$ determines, is $1$
for both. The curvatures coincide as well: $\arg z$ and
$-\arg z$ are harmonic off the origin, so
$d{\star}\eta_1 = d{\star}\eta_2 = 0$. Only the charge measures
differ, $C_1 = \delta_0$ and $C_2 = -\delta_0$, and the equations are
indeed inequivalent, their charges $+1 \neq -1$ being invariants
\cite{charge}. The density $\vartheta$ sees the positions of the
vortices and their orders $|n_p|$; the \emph{signs} live in $d\eta$
alone. This is the precise sense in which the charge, absent from the
vortex-free pair, enters the complete data irreducibly.
\end{example}

\begin{remark}[The frontier]\label{rem:frontier}
Phase-integrability is exactly the hypothesis under which the
order-zero $L^1$ phase-current classification developed here is
defined. Within it, Theorem~\ref{thm:tame-complete} holds --- an orbit
separation: unlike the vortex-free sector, where
Proposition~\ref{prop:realization} also computes the range, the image
of the triple is not characterized here, the compatibility conditions
coupling $\vartheta$ to the currents across a vortex set
(Example~\ref{ex:zbar}) being left open --- and the
sector properly contains the tame one: $B = |z|^2$ has an isolated
zero admitting no factorization $\tau_p\,g_p$, yet $\eta \equiv 0$ is
integrable and the theorem applies. Beyond it, the order-zero
currents of Definition~\ref{def:phase-form} cease to exist: the
oscillation example of \cite[Rem.~4.8]{charge},
$B = |z|\,e^{i/|z|}$ near a vortex of local charge zero, has
$|\eta| = |d(1/\rho)| = \rho^{-2} \notin L^1_{\mathrm{loc}}$, so the
phase form defines neither pairing of
Definition~\ref{def:phase-form}. (In this charge-zero example the
single-valued phase $\theta = 1/\rho$ is itself locally integrable,
so derivatives of the \emph{distribution} $\theta$ survive at higher
order; but they are no longer represented by locally integrable
data, and the completeness argument, which consumes the phase at
order zero through Weyl's lemma, does not reach them.) There the germ
of the phase at the vortex is genuinely richer than the order-zero
data, and the classification of non-integrable vortex germs --- the
local moduli at the frontier --- is open. What the present section
settles is that the pathology begins exactly where the integral fails:
on the integrable side, three data modulo $\Mob$ suffice.
\end{remark}

\begin{remark}[Flat connection, Aharonov--Bohm fluxes]\label{rem:ab}
The physical reading sharpens at the vortices. The 1-form
$A := \tfrac12\,\eta$ is a connection which is \emph{flat} off the
vortex set --- $\eta$ is closed there --- and whose curvature
$dA = \tfrac12\,d\eta$ is, for tame $B$ and by
Theorem~\ref{thm:atoms}(i), the purely atomic 2-form
$dA = \pi \sum_p n_p\,\delta_p$: quantized Aharonov--Bohm fluxes
threading the vortices, with no distributed magnetic field. On the
general phase-integrable sector the curvature is a current, its
atomicity proved here for tame fields only. The phase
curvature $\kappa$ is \emph{not} the curvature of this connection ---
on the tame sector that curvature is entirely atomic --- but its
co-exterior counterpart, an invariant the flux picture cannot see. Section~\ref{sec:discussion}
takes up this reading.
\end{remark}

\section{The solution spaces}\label{sec:solutions}

The minimal form is the level at which the classical theory speaks of
solutions, and the moduli are represented at exactly this level; it is
natural to ask what the classification says about the solutions of
equivalent, and of inequivalent, minimal forms. The answer is
essentially complete, its one global caveat tracked precisely: a nontrivial
solution, where one exists, determines its equation; the sheaf of
local solutions --- nontrivial on every disk --- determines it
unconditionally; the moduli pair is computable from any one nonzero
solution; and the completeness theorem re-reads as a classification
of the solution sheaves, and of the global solution spaces on the
sector where these are nontrivial, under the geometric operators ---
multiplier times composition --- that the residual groupoid provides. A classical tension is resolved
along the way: the similarity principle makes every individual
solution resemble a holomorphic function, and the moduli measure
precisely the obstruction to doing for the whole space what similarity
does one solution at a time. And \emph{a pseudo-analytic
function theory} is meant here in Bers' sense \cite{bers}: the function
theory that one equation --- classically, one generating pair ---
determines, namely its solution space with the geometric morphisms;
the plural counts equations, not frameworks.

Throughout, $\Sol(B)$ denotes the space of
$w \in L^1_{\mathrm{loc}}(\D)$ solving the minimal form
$w_{\bar z} = B\bar w$ in $\mathcal{D}'(\D)$ --- the product
meaningful, $B$ being continuous. Every such $w$ lies in
$C^{1,\alpha}_{\mathrm{loc}}(\D)$, by an elliptic bootstrap through
the Pompeiu operator rather than by any direct Schauder estimate from
$L^1$: on a disk $D \Subset \D$, $w - T_D(B\bar w)$ is holomorphic
(Weyl's lemma for $\bar\partial$), and the mapping properties of
$T_D$ \cite[Ch.~I, \S\S 5--9]{vekua} --- $L^1$ into $L^q$ for every $q < 2$, by
Young's inequality against the kernel $1/\pi z \in L^q_{\mathrm{loc}}$;
$L^p$ into $L^r$ for every $\tfrac1r > \tfrac1p - \tfrac12$ while
$p < 2$; into $C^{0,1 - 2/p}$ once $p > 2$; and $C^{\beta}$ into
$C^{1,\beta}$ --- climb in finitely many steps from
$w \in L^1_{\mathrm{loc}}$ through $w$ continuous to
$w \in C^{1,\alpha}_{\mathrm{loc}}$, the H\"older regularity of $B$
entering at the last step. It is naturally a vector space over $\R$
and is generally not a complex vector space; the analytic sector is
the exceptional case, as Remark~\ref{rem:poles} makes precise.

\begin{proposition}[One solution determines the equation]\label{prop:one-solution}
Let $B$ be a minimal form and $w \in \Sol(B)$, $w \not\equiv 0$. Then
$Z(w)$ is discrete in $\D$, and
\begin{equation}\label{eq:recover}
B \;=\; \frac{w_{\bar z}}{\bar w} \qquad \text{on } \D \setminus Z(w),
\end{equation}
so that $B$ is determined on all of $\D$ by continuity. Consequently:
\begin{itemize}
\item[(i)] distinct minimal forms share no solution but $0$;
consequently the assignment $B \mapsto \Sol(B)$ is injective on every
subclass on which $\Sol \neq \{0\}$ is known
(Remark~\ref{rem:nontrivial}), and unconditionally at the level of
local solutions (Proposition~\ref{prop:sheaf});
\item[(ii)] the moduli are computable from any one nontrivial
solution: for vortex-free $B$ and any $w \in \Sol(B)$,
$w \not\equiv 0$,
\[
\vartheta \;=\; \tfrac14\,(1-|z|^2)^2\,\frac{|w_{\bar z}|^2}{|w|^2}
\qquad \text{on } \D \setminus Z(w),
\]
and, after extending the quotient in \eqref{eq:recover} continuously
across the discrete set $Z(w)$,
\[
K \;=\; \Delta\arg\!\left(\frac{w_{\bar z}}{\bar w}\right)\,dx\,dy
\qquad \text{in } \mathcal D'(\D).
\]
When the extended phase admits a $C^2$ branch, this becomes
$\kappa = \Delta_{\mathrm{hyp}}\arg(w_{\bar z}/\bar w)$. The data are
independent of the choice of $w$, since by \eqref{eq:recover} they are
functions of $B$ alone.
\end{itemize}
\end{proposition}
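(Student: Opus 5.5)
The plan is to derive discreteness of $Z(w)$ from the similarity principle; everything else is algebra on the equation. Since $w \in C^{1,\alpha}_{\mathrm{loc}}$ by the bootstrap above, I would set $b := B\bar w/w$ on $\D\setminus Z(w)$ and $b:=0$ on $Z(w)$. Then $|b| \le |B|$ is locally bounded, and $w_{\bar z} = b\,w$ holds in $\mathcal D'(\D)$. On each disk $D \Subset \D$ I would put $s := T_D b$, the Pompeiu transform, which is continuous because $b \in L^\infty(D)$. Then $\bar\partial(e^{-s}w) = e^{-s}(w_{\bar z} - b w) = 0$ distributionally, so $e^{-s}w$ is holomorphic on $D$ by Weyl's lemma for $\bar\partial$. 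This is the classical Bers--Vekua similarity principle. Hence $Z(w)\cap D$ is the zero set of a holomorphic function times a zero-free factor. If that holomorphic function vanishes identically on some disk, then $w \equiv 0$ there. Connectedness of $\D$, covered by overlapping disks, then propagates $w \equiv 0$ to all of $\D$, contradicting $w \not\equiv 0$. So $Z(w)$ is discrete.

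The main obstacle is the regularity bookkeeping for $b$, because it is only bounded measurable and need not be continuous. The order of steps handles this. The $C^{1,\alpha}$ regularity of $w$ comes first, from $B$. The product rule for $e^{-s}w$ with $s \in W^{1,p}_{\mathrm{loc}}$, $p>2$ (which holds since $b\in L^\infty$), is then legitimate. This gives the distributional identity, and the conclusion needs nothing more than Weyl's lemma.

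Formula \eqref{eq:recover} follows by dividing the equation by $\bar w$ off $Z(w)$. Since $B$ is continuous and $\D\setminus Z(w)$ is dense (its complement is discrete), $B$ is determined everywhere by continuity.

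For (i), suppose $w \not\equiv 0$ solves both $B_1$ and $B_2$. Then $B_1=B_2$ on the dense set $\D\setminus Z(w)$, hence everywhere. Injectivity of $B\mapsto \Sol(B)$ follows wherever the solution space is nontrivial. For the local-solution statement I would simply defer to Proposition~\ref{prop:sheaf}.

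For (ii), I would substitute \eqref{eq:recover} into Definition~\ref{def:pair}: $|B|^2 = |w_{\bar z}|^2/|w|^2$ off $Z(w)$ gives the formula for $\vartheta$. The continuous extension of $w_{\bar z}/\bar w$ across $Z(w)$ is $B$ itself, which is zero-free. So its continuous argument is a branch of $\arg B$, and the current $K$ of Remark~\ref{rem:distributional} is $\Delta\arg(w_{\bar z}/\bar w)\,dx\,dy$ in $\mathcal D'(\D)$. Branch independence comes from the additive constants being killed by $\Delta$, and the $C^2$ case yields $\kappa$ as stated. Independence of the choice of $w$ is then immediate, since both expressions are functions of $B$ alone.
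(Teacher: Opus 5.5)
Your proof is correct and follows the paper's route. Both use a similarity factorization on disks $D \Subset \D$, an open--closed connectedness argument for the discreteness of $Z(w)$, division by $\bar w$ off the zero set followed by density and continuity, and direct substitution for (i) and (ii). The only difference is that you build the factorization $w = e^{s}\cdot(e^{-s}w)$ explicitly from $b = B\bar w/w$ and the Pompeiu transform, where the paper simply cites the similarity principle from Vekua and Bers.
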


\begin{proof}
On any disk $D \Subset \D$ the coefficient is bounded and H\"older,
and the similarity principle applies: $w = \chi e^{\omega}$ on $D$
with $\chi$ holomorphic and $\omega$ continuous
\cite[Ch.~III]{vekua}, \cite{bers}.
The set of points of $\D$ near which $w$ vanishes identically is open,
and closed in $\D$: at any of its boundary points a similarity disk
gives a holomorphic $\chi$ vanishing on an open subset, hence
identically, hence $w \equiv 0$ near that point. By connectedness
$w \equiv 0$ --- excluded --- or no such point exists, and then every
zero is a zero of some $\chi$, hence isolated. Off $Z(w)$ the equation
divides by $\bar w \neq 0$, giving \eqref{eq:recover}; $B$ is
continuous on $\D$ and $\D \setminus Z(w)$ is dense. (i) A common
nontrivial solution computes both coefficients by
\eqref{eq:recover}; and if $\Sol(B_1) = \Sol(B_2) \neq \{0\}$, any
nonzero member is common. (ii) Substitution of \eqref{eq:recover}
into Definition~\ref{def:pair}.
\end{proof}

The injectivity of (i) is conditional at the global level only. The
local theory removes the condition: nontrivial solutions always exist
locally, and the local solutions already determine the coefficient.

\begin{proposition}[The local solution sheaf determines the equation]\label{prop:sheaf}
For open $U \subseteq \D$ let $\Sol_U(B)$ denote the solutions of
$w_{\bar z} = B\bar w$ in $\mathcal{D}'(U)$, so that
$\Sol(B) = \Sol_\D(B)$ and restriction maps
$\Sol_U(B) \to \Sol_V(B)$ for $V \subseteq U$ make
$U \mapsto \Sol_U(B)$ a sheaf of real vector spaces. Then:
\begin{itemize}
\item[(i)] every disk $D \Subset \D$ has
$\Sol_D(B) \neq \{0\}$ --- indeed infinite-dimensional
\cite{bers, vekua} (Remark~\ref{rem:nontrivial});
\item[(ii)] if $\Sol_D(B_1) = \Sol_D(B_2)$ for every disk $D$ in some
cover of $\D$ by disks $D \Subset \D$, then $B_1 = B_2$.
\end{itemize}
Consequently the assignment $B \mapsto \bigl(U \mapsto \Sol_U(B)\bigr)$,
from minimal forms to solution sheaves, is injective --- with no
global nontriviality hypothesis.
\end{proposition}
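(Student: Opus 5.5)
The plan splits into the routine sheaf structure, the existence claim (i), and the recovery claim (ii), with (ii) reduced to Proposition~\ref{prop:one-solution} applied disk by disk.

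\emph{Sheaf structure.} The equation $w_{\bar z} = B\bar w$ is local and real-linear. The sheaf axioms are therefore those of $\mathcal{D}'$: distributions glue uniquely, and the distributional equation is tested on test functions supported in small open sets, so it glues as well. The elliptic bootstrap recorded before Proposition~\ref{prop:one-solution} runs on any open $U$ and gives $\Sol_U(B) \subset C^{1,\alpha}_{\mathrm{loc}}(U)$.

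\emph{Existence (i).} On a disk $D \Subset \D$ the field $B$ is bounded and H\"older on $\bar D$. I would produce nonzero solutions by the Vekua construction, which is a contraction for the Pompeiu operator.
\begin{itemize}
\item For any holomorphic $\Phi$ on a neighbourhood of $\bar D$, solve the Fredholm equation $w = \Phi + T_D(B\bar w)$ on $D$.
\item The operator $w \mapsto T_D(B\bar w)$ is compact on $C(\bar D)$, and $\bar\partial T_D = \mathrm{id}$. Hence any solution satisfies $w_{\bar z} = B\bar w$.
\item To avoid Fredholm-alternative bookkeeping, I would instead first solve on a small disk, where $\|T_D(B\,\cdot)\| < 1$ and the Neumann series converges. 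This gives a solution with $w - \Phi$ uniformly small, so $w \neq 0$ once $\Phi \equiv 1$.
\end{itemize}
For a general disk $D \Subset \D$, I would use the similarity principle in the reverse direction. Set $w = \chi e^{\omega}$ with $\chi$ holomorphic. The equation becomes $\omega_{\bar z} = B\,\bar\chi e^{\bar\omega}/(\chi e^{\omega})$. Alternatively I would simply cite \cite[Ch.~III]{vekua}, \cite{bers}: the generating pair of the equation on a disk compactly inside the domain furnishes an infinite-dimensional space of solutions, parametrized by holomorphic data. Infinite-dimensionality follows because the map $\Phi \mapsto w$ is injective modulo a finite-dimensional Fredholm kernel.

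\emph{Recovery (ii).} Fix a disk $D$ in the cover. By (i) pick $w \in \Sol_D(B_1) = \Sol_D(B_2)$ with $w \not\equiv 0$.
\begin{itemize}
\item Apply Proposition~\ref{prop:one-solution} on $D$ in place of $\D$. Its proof is local: it uses only the similarity principle and connectedness of $D$. So $Z(w)$ is discrete in $D$.
\item On $D \setminus Z(w)$ we get $B_1 = w_{\bar z}/\bar w = B_2$.
\item By continuity $B_1 = B_2$ on $D$.
\item The disks cover $\D$, so $B_1 = B_2$.
\end{itemize}
Injectivity of the sheaf assignment follows, since equal sheaves have equal sections over every disk of any cover.

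\emph{Main obstacle.} The main obstacle is (i) on a disk that is not small: guaranteeing a nonzero global solution on all of $D$ rather than on a germ. I would first try the Fredholm alternative for $I - T_D(B\bar{\cdot})$ on $C(\bar D)$, viewed as a real-linear compact perturbation of the identity. Its range then has finite codimension, so an infinite-dimensional space of holomorphic $\Phi$ yields an infinite-dimensional space of solutions.

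For (ii) alone, small disks suffice. A cover by disks small enough for the Neumann series is always available, and the hypothesis is about some cover. So the Fredholm argument is needed only for the stated strength of (i).
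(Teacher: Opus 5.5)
Your proof is correct. For (ii) it is exactly the paper's argument: pick a nonzero $w \in \Sol_D(B_1) = \Sol_D(B_2)$, run the proof of Proposition~\ref{prop:one-solution} on $D$ to get discrete zeros and $B_1 = w_{\bar z}/\bar w = B_2$ off $Z(w)$, then conclude by continuity and the cover. For (i) the paper only cites \cite{bers, vekua}. Your sketch is a valid substitute: the Neumann series on small disks, and on general disks the observation that $I - K$, with $K : w \mapsto T_D(B\bar w)$ compact on the real Banach space $C(\bar D)$, has closed range of finite codimension, so that range meets the holomorphic polynomials in an infinite-dimensional subspace.

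One correction to your closing remark. The cover in (ii) is given by the hypothesis; you do not get to choose it. It may consist of large disks, for instance $D(0, 1 - 1/k)$, and then small-disk Neumann solutions do not supply the nonzero element of $\Sol_D(B_1)$ the recovery step needs. So (ii) itself requires nontriviality on arbitrary disks $D \Subset \D$. That can come from your Fredholm step, or from Proposition~\ref{prop:zero-free} after rescaling $D$ affinely onto $\D$. Only the final injectivity statement, where the two sheaves agree on every open set, can be run on small disks alone.
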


\begin{proof}
(i) is the local theory: on $D \Subset \D$ the coefficient is bounded
and H\"older, and the solution space is infinite-dimensional
\cite{bers, vekua}. (ii) On each disk $D$ of the cover pick
$w \in \Sol_D(B_1) = \Sol_D(B_2)$, $w \not\equiv 0$; the argument of
Proposition~\ref{prop:one-solution}, run on $D$, gives $Z(w)$
discrete in $D$ and $B_1 = w_{\bar z}/\bar w = B_2$ on
$D \setminus Z(w)$, hence on $D$ by continuity. The disks cover $\D$.
\end{proof}

\begin{theorem}[The solution spaces classify]\label{thm:solutions}
Let $B_1, B_2$ be minimal forms with $\Sol(B_2) \neq \{0\}$. The
following are equivalent:
\begin{itemize}
\item[(a)] $B_1$ and $B_2$ are equivalent, i.e.\ related by
\eqref{eq:orbit};
\item[(b)] $\Sol(B_1) = \varphi\cdot\bigl(\Sol(B_2) \circ F\bigr)$ for
some holomorphic zero-free $\varphi$ on $\D$ and some $F \in \Mob$;
\item[(c)] $\varphi\cdot(w \circ F) \in \Sol(B_1)$ for a
\emph{single} $w \in \Sol(B_2)$, $w \not\equiv 0$, and some such pair
$(\varphi, F)$.
\end{itemize}
\end{theorem}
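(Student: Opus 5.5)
The plan is to prove (a)$\Rightarrow$(b)$\Rightarrow$(c)$\Rightarrow$(a). The engine is a single intertwining computation: for $\varphi$ holomorphic zero-free and $F\in\Mob$, the operator $T w := \varphi\cdot(w\circ F)$ carries $\Sol(B)$ bijectively onto $\Sol(B')$, where
\[
B' \;=\; \frac{\varphi}{\bar\varphi}\,\overline{F'}\,(B\circ F).
\]
To verify this, first take $w\in\Sol(B)$, which is $C^{1,\alpha}_{\mathrm{loc}}$ by the bootstrap recorded before Proposition~\ref{prop:one-solution}, so the classical chain rule applies. Then $(w\circ F)_{\bar z}=\overline{F'}\,(B\circ F)\,\overline{w\circ F}$, exactly as in Step~(ii) of Proposition~\ref{prop:residual}. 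Multiplying by the holomorphic $\varphi$ gives
\[
(\varphi\,(w\circ F))_{\bar z} \;=\; \varphi\,\overline{F'}(B\circ F)\,\overline{w\circ F} \;=\; \frac{\varphi}{\bar\varphi}\,\overline{F'}(B\circ F)\;\overline{\varphi\,(w\circ F)}.
\]
The inverse operator $w'\mapsto (\varphi^{-1}w')\circ F^{-1}$ is of the same form, with $\varphi^{-1}\circ F^{-1}$ again holomorphic and zero-free, and the same computation applies to it. Hence $T$ is a real-linear bijection $\Sol(B)\to\Sol(B')$.

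\textbf{(a)$\Rightarrow$(b).} By \eqref{eq:orbit} we have $B_1=(\bar\psi/\psi)\,\overline{F'}\,(B_2\circ F)$ for some holomorphic zero-free $\psi$. Set $\varphi:=\psi^{-1}$. Then $\varphi/\bar\varphi=\bar\psi/\psi$, and the computation above gives $\Sol(B_1)=\varphi\cdot(\Sol(B_2)\circ F)$. Some care is needed with the convention direction: \eqref{eq:orbit} writes the gauge as $w=\varphi w'$, so the solution map runs the other way. I would fix this once by checking which of $\varphi$ and $\varphi^{-1}$ appears, since both range over the same set.

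\textbf{(b)$\Rightarrow$(c).} This is immediate. Since $\Sol(B_2)\neq\{0\}$, pick any $w\not\equiv0$; then $\varphi\cdot(w\circ F)\in\Sol(B_1)$ by (b).

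\textbf{(c)$\Rightarrow$(a).} This is the step I expect to be the crux. Put $B':=(\varphi/\bar\varphi)\overline{F'}(B_2\circ F)$. By the intertwining computation, $v:=\varphi\cdot(w\circ F)$ lies in $\Sol(B')$, and by hypothesis it also lies in $\Sol(B_1)$. It is nonzero because $\varphi$ is zero-free and $F$ is a bijection. Proposition~\ref{prop:one-solution}(i) says distinct minimal forms share no nonzero solution, so $B_1=B'$. The field $B'$ is related to $B_2$ by \eqref{eq:orbit} with gauge $\varphi^{-1}$, which is precisely (a). The one subtlety is that Proposition~\ref{prop:one-solution} requires $B'$ to be a continuous minimal form; this holds because $\overline{F'}$ and $\varphi/\bar\varphi$ are smooth on $\D$, so $B'$ is $C^\alpha_{\mathrm{loc}}$.
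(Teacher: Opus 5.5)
Your proposal is correct and follows essentially the paper's proof. It uses the same intertwining computation, with your $\varphi$ playing the role of the paper's $\varphi^{-1}$; your convention check $\varphi/\bar\varphi=\bar\psi/\psi$ already settles the direction you hedged about. It then has the same trivial (b)$\Rightarrow$(c), and proves (c)$\Rightarrow$(a) by recovering the coefficient from one nonzero solution. The only cosmetic difference is that you invoke Proposition~\ref{prop:one-solution}(i) for the auxiliary coefficient $B'$, where the paper writes out the quotient $(w_1)_{\bar z}/\bar w_1$ directly off $Z(w_1)$; that is the same argument.
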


\begin{proof}
(a) $\Rightarrow$ (b). Let
$B_1 = (\bar\varphi/\varphi)\,\overline{F'}\,(B_2 \circ F)$ per
\eqref{eq:orbit}. For $w \in \Sol(B_2)$ set
$w_1 := \varphi^{-1}\,(w \circ F)$; since $\varphi^{-1}$ is
holomorphic,
\[
(w_1)_{\bar z}
= \varphi^{-1}\,\overline{F'}\,(w_{\bar z} \circ F)
= \varphi^{-1}\,\overline{F'}\,(B_2 \circ F)\,(\bar w \circ F)
= \frac{\bar\varphi}{\varphi}\,\overline{F'}\,(B_2 \circ F)\cdot
\overline{\varphi^{-1}\,(w \circ F)}
= B_1\,\bar w_1 ,
\]
so $w \mapsto \varphi^{-1}(w \circ F)$ maps $\Sol(B_2)$ into
$\Sol(B_1)$, bijectively, with inverse
$w_1 \mapsto (\varphi \circ F^{-1})\,(w_1 \circ F^{-1})$ of the same
shape --- the multiplier transported along with the composition, so
that the composite returns
$\varphi^{-1}\,(\varphi \circ F^{-1} \circ F)\,w = w$; this is
(b) with multiplier $\varphi^{-1}$.

(b) $\Rightarrow$ (c). Take any nontrivial $w \in \Sol(B_2)$; its
image is nontrivial, the multiplier being zero-free and $F$ bijective.

(c) $\Rightarrow$ (a). The transported solution
$w_1 := \varphi\,(w \circ F)$ is nontrivial, and off the discrete
$Z(w_1)$, Proposition~\ref{prop:one-solution} recovers
\[
B_1
= \frac{(w_1)_{\bar z}}{\bar w_1}
= \frac{\varphi\,\overline{F'}\,(B_2 \circ F)\,(\bar w \circ F)}
       {\bar\varphi\,(\bar w \circ F)}
= \frac{\varphi}{\bar\varphi}\;\overline{F'}\,(B_2 \circ F),
\]
hence on all of $\D$ by continuity: the relation \eqref{eq:orbit} with
gauge $\varphi^{-1}$.
\end{proof}

The hypothesis $\Sol(B_2) \neq \{0\}$ enters part (c) alone, and
localization removes it: since nontrivial solutions always exist on
disks, the sheaf-level statement is unconditional.

\begin{corollary}[The solution sheaves classify, unconditionally]\label{cor:sheaf-classify}
For minimal forms $B_1, B_2$ the following are equivalent, with no
nontriviality hypothesis:
\begin{itemize}
\item[(a)] $B_1$ and $B_2$ are equivalent, i.e.\ related by
\eqref{eq:orbit};
\item[(b)] there are a holomorphic zero-free $\varphi$ on $\D$ and
$F \in \Mob$ with
\[
\Sol_U(B_1) \;=\; \varphi\cdot\bigl(\Sol_{F(U)}(B_2) \circ F\bigr)
\qquad \text{for every open } U \subseteq \D ;
\]
\item[(c)] the displayed identity holds, for a single such pair
$(\varphi, F)$, on every disk of some cover of $\D$ by disks
$D \Subset \D$.
\end{itemize}
\end{corollary}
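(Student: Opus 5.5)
The plan is to prove the cycle (a) $\Rightarrow$ (b) $\Rightarrow$ (c) $\Rightarrow$ (a). Each step localizes the corresponding step of Theorem~\ref{thm:solutions}. Local nontriviality, Proposition~\ref{prop:sheaf}(i), replaces the global hypothesis $\Sol(B_2)\neq\{0\}$ exactly where that hypothesis was used.

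\emph{(a) $\Rightarrow$ (b).} Write $B_1 = (\bar\varphi_0/\varphi_0)\,\overline{F'}\,(B_2\circ F)$ as in \eqref{eq:orbit} and set $\varphi := \varphi_0^{-1}$. The computation in the proof of Theorem~\ref{thm:solutions}, (a) $\Rightarrow$ (b), is purely local: for $w\in\Sol_{F(U)}(B_2)$, the function $\varphi\,(w\circ F)$ solves $w_{\bar z}=B_1\bar w$ on $U$. That chain rule uses only $\varphi$ holomorphic on $U$ and $F$ conformal, and it holds distributionally because the solutions are $C^{1,\alpha}_{\mathrm{loc}}$ by the bootstrap recorded at the start of the section. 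The inverse map $w_1\mapsto(\varphi^{-1}\circ F^{-1})\,(w_1\circ F^{-1})$ has the same shape, with the roles of $B_1$ and $B_2$ exchanged through the inverse orbit relation. Running the same computation for it from $F(U)$ back to $U$ gives the reverse inclusion. Together these give equality for every open $U$.

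\emph{(b) $\Rightarrow$ (c).} This is trivial: restrict to the disks of any cover.

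\emph{(c) $\Rightarrow$ (a).} Fix a disk $D$ of the cover. The set $F(D)$ is a disk compactly contained in $\D$, because M\"obius maps carry disks to disks and compact sets to compact sets. By Proposition~\ref{prop:sheaf}(i), applied to $F(D)$, there is $w\in\Sol_{F(D)}(B_2)$ with $w\not\equiv0$. Its transport $w_1:=\varphi\,(w\circ F)$ lies in $\Sol_D(B_1)$ and is nontrivial, since $\varphi$ is zero-free and $F$ is bijective. Running the argument of Proposition~\ref{prop:one-solution} on $D$ shows that $Z(w_1)$ is discrete in $D$. The quotient computation of Theorem~\ref{thm:solutions}, (c) $\Rightarrow$ (a), then gives
\[
B_1=\frac{\varphi}{\bar\varphi}\,\overline{F'}\,(B_2\circ F)
\quad\text{on } D\setminus Z(w_1),
\]
and this extends to all of $D$ by continuity. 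The right-hand side is a single globally defined function, because $(\varphi,F)$ is fixed across the cover. The identity therefore holds on the union of the disks, which is $\D$. This is \eqref{eq:orbit} with gauge $\varphi^{-1}$.

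The main obstacle is the bookkeeping in (a) $\Rightarrow$ (b) for arbitrary open $U$, as opposed to disks. One must check that the solution sheaf is transported correctly: $\Sol_{F(U)}(B_2)\circ F$ should consist of functions on $U$, and the distributional equation should pull back under the diffeomorphism $F$. I will handle this by noting that, by the bootstrap, every element of $\Sol_V(B)$ is $C^{1}$ on $V$. The equation then holds pointwise, so the pullback is classical and no distributional change of variables is needed.
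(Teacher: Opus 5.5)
Your proposal is correct and follows the paper's proof step for step. For (a)$\Rightarrow$(b) you run the transport $w\mapsto\varphi_0^{-1}(w\circ F)$ over each open $U$, with an inverse of the same shape; (b)$\Rightarrow$(c) is restriction; and for (c)$\Rightarrow$(a) you take a nontrivial solution on $F(D)\Subset\D$ from Proposition~\ref{prop:sheaf}(i), recover $B_1$ on each disk by the quotient formula, and patch using the single global pair $(\varphi,F)$. Your extra checks --- that $F(D)$ is again a disk, and that the $C^{1,\alpha}_{\mathrm{loc}}$ bootstrap makes the pullback classical --- fill in details the paper leaves implicit.
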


\begin{proof}
(a) $\Rightarrow$ (b): the computation of
Theorem~\ref{thm:solutions}, (a) $\Rightarrow$ (b), is local and runs
verbatim over each $U$, the transport
$w \mapsto \varphi^{-1}(w \circ F)$ carrying $\Sol_{F(U)}(B_2)$
bijectively onto $\Sol_U(B_1)$. (b) $\Rightarrow$ (c) is a
restriction. (c) $\Rightarrow$ (a): on each disk $D$ of the cover
pick $w \in \Sol_{F(D)}(B_2)$ nontrivial
(Proposition~\ref{prop:sheaf}(i), $F(D) \Subset \D$ since $F$ is an
automorphism); its transport is a nontrivial member of $\Sol_D(B_1)$,
and the computation of Theorem~\ref{thm:solutions},
(c) $\Rightarrow$ (a), run on $D$, recovers
$B_1 = (\varphi/\bar\varphi)\,\overline{F'}\,(B_2 \circ F)$ on $D$.
The pair $(\varphi, F)$ being one and the same on every disk, the
identities patch over the cover, and \eqref{eq:orbit} holds on $\D$
with gauge $\varphi^{-1}$.
\end{proof}

A single disk does not suffice in (c): the identity on one disk
determines the orbit relation there and nowhere else, the coefficient
being merely continuous; it is the cover, against one global
$(\varphi, F)$, that globalizes.

Combined with Theorem~\ref{thm:completeness}, on the vortex-free
sector: two minimal forms carry solution sheaves related by a
multiplier--composition isomorphism if and only if their pairs
$(\vartheta, K)$ agree modulo a common M\"obius map
(Corollary~\ref{cor:sheaf-classify}); at the level of the global
spaces the same holds whenever $\Sol(B_2) \neq \{0\}$
(Theorem~\ref{thm:solutions}). Inequivalent
minimal forms are genuinely different pseudo-analytic function
theories on $\D$ --- sharing no function but $0$, by
Proposition~\ref{prop:one-solution}(i) --- and the pair is the
complete invariant of the function theory. Abstract linear isomorphism
of the spaces, by contrast, is not the equivalence considered here:
it discards the geometric action of the equation, and no
classification of the underlying real vector spaces is attempted; the
geometric
operators, the maps the groupoid itself provides, are the correct
morphisms of function theories, and part (c) says one function
suffices to test them.

The massless point deserves its own statement. The sector
$\Mcal = 0$ is the single orbit $B \equiv 0$ --- the analytic
equation \cite{mass} --- and it sits outside the hypotheses of both
completeness theorems: $Z(B) = \D$ is neither empty nor null, so the
equation is neither vortex-free nor phase-integrable. It needs
neither, being cut out by the mass alone; and the specialization of
Theorem~\ref{thm:solutions}(c) at $B_2 = 0$ collapses to a one-line
criterion.

\begin{corollary}[No holomorphic solutions off the analytic sector]\label{cor:massless}
A minimal form admitting a single holomorphic solution
$w \not\equiv 0$ is the analytic equation: $B \equiv 0$, and
$\Mcal = 0$. Equivalently, while every solution of every minimal
form is \emph{nearly} holomorphic --- $w = \chi e^{\omega}$ with
$\omega$ continuous, by similarity --- an \emph{exactly} holomorphic
solution occurs only at mass zero: the similarity exponent is not
removable from even one solution while $\Mcal > 0$.
\end{corollary}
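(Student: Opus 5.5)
The plan is to specialize Theorem~\ref{thm:solutions}, (c) $\Rightarrow$ (a), to $B_2 \equiv 0$, or equivalently to run the recovery formula \eqref{eq:recover} of Proposition~\ref{prop:one-solution} directly.

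\emph{Step 1: recover $B$ from the solution.} Let $w \in \Sol(B)$ be holomorphic and $w \not\equiv 0$. By Proposition~\ref{prop:one-solution}, $Z(w)$ is discrete in $\D$; here this is also immediate from the identity theorem, since $w$ is holomorphic. On $\D \setminus Z(w)$ the recovery formula \eqref{eq:recover} gives
\[
B \;=\; \frac{w_{\bar z}}{\bar w} \;=\; 0,
\]
because $w_{\bar z} = 0$.

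\emph{Step 2: extend across the zeros.} $B$ is continuous on $\D$ and $\D \setminus Z(w)$ is dense, so $B \equiv 0$ on all of $\D$.

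\emph{Step 3: the mass.} By Proposition~\ref{prop:mass-moment}, $\Mcal = \int_\D \vartheta\,\hyparea$, and $\vartheta = \tfrac14(1-|z|^2)^2|B|^2$ vanishes identically. Hence $\Mcal = 0$. Conversely, $\Mcal = 0$ forces $\vartheta \equiv 0$, since $\vartheta$ is continuous and nonnegative, and therefore $B \equiv 0$. This identifies the massless sector as the single orbit of the analytic equation.

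\emph{Step 4: the reformulation.} Take the similarity representation $w = \chi e^{\omega}$ from the proof of Proposition~\ref{prop:one-solution}. If some nonzero solution had a removable exponent, meaning $\omega$ could be taken constant, or more generally $w$ were exactly holomorphic, then Steps 1--3 would give $\Mcal = 0$. By contraposition, whenever $\Mcal > 0$ no nonzero solution is exactly holomorphic.

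There is no real obstacle here. The only point needing care is that Step 1 divides by $\bar w$, which is legitimate only off $Z(w)$. The discreteness of $Z(w)$ together with the continuity of $B$ handles this, exactly as in Proposition~\ref{prop:one-solution}.
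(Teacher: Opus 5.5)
Your proposal is correct and follows the paper's own proof: the recovery formula \eqref{eq:recover} gives $B = w_{\bar z}/\bar w = 0$ off the discrete zero set of the holomorphic $w$, continuity extends this to all of $\D$, and Proposition~\ref{prop:mass-moment} then yields $\Mcal = 0$. The converse in your Step~3 and the contrapositive in your Step~4 are correct additions that the paper leaves implicit.
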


\begin{proof}
For holomorphic $w \not\equiv 0$, equation \eqref{eq:recover} gives
$B = w_{\bar z}/\bar w = 0$ off the discrete $Z(w)$, hence on $\D$ by
continuity; $\Mcal = 0$ is then Proposition~\ref{prop:mass-moment}.
\end{proof}

\begin{remark}[Similarity is solutionwise; the two poles]\label{rem:poles}
The classical instinct against the theorem --- all these solution
theories look alike, since by similarity every solution resembles a
holomorphic function --- fails at the quantifier. The similarity
multiplier $e^{\omega}$ is built \emph{from the solution it
trivializes}: $\omega$ is a Pompeiu integral of $B\,\bar w/w$
\cite{vekua, bers}, a different multiplier for each $w$, and it maps
one solution, not the space. The multipliers that map the space are
exactly the holomorphic gauges of the residual groupoid, and the
moduli measure the obstruction to doing simultaneously what
similarity does one solution at a time. The two poles are classical
statements: full simultaneous trivialization ---
$\Sol(B) = \varphi\cdot(\mathcal{O}(\D) \circ F)$, the holomorphic
theory --- holds if and only if $B \equiv 0$, the analytic sector of
zero mass \cite{mass}; simultaneous normalization to a positive
coefficient holds if and only if $\kappa \equiv 0$
(Corollary~\ref{cor:positive}). Between the poles the pair
interpolates, one function theory per point of the moduli. The
$\R$-linear structure repeats the first pole in miniature: $iw$ solves
whenever $w$ does if and only if $B\,\bar w \equiv 0$ for every
$w \in \Sol(B)$, and a nontrivial solution has discrete zeros
(Proposition~\ref{prop:one-solution}); hence, whenever
$\Sol(B) \neq \{0\}$ --- as for bounded $B$,
Proposition~\ref{prop:zero-free} --- $\Sol(B)$ is a complex vector space
precisely when $B \equiv 0$: the holomorphic theory is the unique
nontrivial $\C$-linear one. It is also the unique fixed
point of the entire residual groupoid,
$(\bar\varphi/\varphi)\,\overline{F'}\cdot 0 = 0$ for every gauge and
every M\"obius map: maximal symmetry of the equation matching the
maximal family of geometric self-maps of its solution space, the
correspondence of Theorem~\ref{thm:solutions} working at its extreme
point.
\end{remark}

\begin{remark}[Upstream and downstream of the equation]\label{rem:upstream}
The comparison with Bers sharpens here. Bers calls two generating
pairs \emph{equipotent} when they determine the same class of
pseudo-analytic functions \cite{bers}: since, by
Proposition~\ref{prop:one-solution}, the solution space determines the
equation, equipotence is exactly the fiber of the passage from pairs
to equations. The classical theory quotients the pairs down to the
equation; the present paper quotients the equations down to the
moduli. The two theories stand in series --- pairs, equations, moduli
--- Bers' equivalences filling the fibers of the first arrow, the
residual groupoid the fibers of the second.
\end{remark}

\begin{proposition}[Global zero-free solutions for bounded fields]\label{prop:zero-free}
If $B$ is bounded on $\D$, the minimal form $w_{\bar z} = B\bar w$
admits a global zero-free solution $w = e^{\sigma}$ with
$\sigma \in C^0(\overline\D)$; in particular $\Sol(B) \neq \{0\}$.
\end{proposition}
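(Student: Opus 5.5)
The plan is to look for the solution in exponential form $w=e^{\sigma}$, which turns the equation into a nonlinear fixed-point problem for $\sigma$, and then to solve that problem with the Pompeiu operator and the Schauder fixed-point theorem.

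\emph{Reduction to an equation for $\sigma$.} Substituting $w=e^{\sigma}$ into $w_{\bar z}=B\bar w$ gives $e^{\sigma}\sigma_{\bar z}=B\,e^{\bar\sigma}$, that is,
\[
\sigma_{\bar z} \;=\; B\,e^{\bar\sigma-\sigma} \;=\; B\,e^{-2i\operatorname{Im}\sigma}.
\]
The key feature is that the right side has modulus $|B|\le \|B\|_\infty$, \emph{independently of $\sigma$}: the nonlinearity is unimodular. I would therefore set $T\colon C^0(\overline\D)\to C^0(\overline\D)$ by
\[
T\sigma \;:=\; T_\D\bigl(B\,e^{-2i\operatorname{Im}\sigma}\bigr),
\qquad
T_\D f(z) = -\frac1\pi\int_\D \frac{f(\zeta)}{\zeta-z}\,d\xi\,d\eta,
\]
the Pompeiu operator on the whole disk. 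This is legitimate globally because $B\in L^\infty(\D)$ is integrable up to the boundary, which is exactly where the boundedness hypothesis is spent.

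\emph{Fixed point.} By the Vekua mapping properties already cited in Section~\ref{sec:solutions}, $T_\D$ maps $L^p(\D)$, $p>2$, boundedly into $C^{0,1-2/p}(\overline\D)$. Fix such a $p$. Then the image of $T$ lies in a fixed ball of $C^{0,1-2/p}(\overline\D)$ of radius $C_p\|B\|_\infty\pi^{1/p}$, which is compact in $C^0(\overline\D)$ by Arzel\`a--Ascoli. Continuity of $T$ in the sup norm follows from dominated convergence applied to $e^{-2i\operatorname{Im}\sigma_n}\to e^{-2i\operatorname{Im}\sigma}$ together with the $L^p\to C^0$ bound. Schauder's theorem on the closed convex set $\{\|\sigma\|_\infty\le R\}$, with $R$ the uniform bound above, then yields a fixed point $\sigma\in C^{0,1-2/p}(\overline\D)\subset C^0(\overline\D)$.

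\emph{Verification.} Since $\bar\partial T_\D f=f$ in $\mathcal D'(\D)$, the fixed point satisfies $\sigma_{\bar z}=B e^{\bar\sigma-\sigma}$ distributionally. The chain rule for $e^{\sigma}$ with $\sigma\in W^{1,p}_{\mathrm{loc}}$ (since $\partial T_\D$ is the Beurling transform, bounded on $L^p$) shows that $w=e^{\sigma}$ solves $w_{\bar z}=B\bar w$ in $\mathcal D'(\D)$. It is zero-free because $e^{\sigma}$ never vanishes, and it is continuous on $\overline\D$. The bootstrap recorded before Proposition~\ref{prop:one-solution} then places $w$ in $C^{1,\alpha}_{\mathrm{loc}}$, so $w\in\Sol(B)\setminus\{0\}$.

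\emph{Main obstacle.} The step I expect to require the most care is the distributional chain rule for $e^{\sigma}$ at the low regularity of $\sigma$. The fallback is to run the bootstrap on $\sigma$ directly: locally $B e^{-2i\operatorname{Im}\sigma}$ is H\"older, since $B\in C^\alpha_{\mathrm{loc}}$ and $\sigma$ is H\"older. Hence $\sigma\in C^{1,\beta}_{\mathrm{loc}}$, and the classical chain rule applies.
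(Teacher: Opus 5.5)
Your proposal is correct and follows the paper's proof essentially step for step: the same exponential ansatz, the same observation that the nonlinearity $B\,e^{\bar\sigma-\sigma}$ has modulus bounded independently of $\sigma$, the same compact map $T_\D$ into a H\"older ball, Schauder on a sup-norm ball, and the chain rule justified by $W^{1,p}_{\mathrm{loc}}$ regularity of $\sigma$ via the Beurling transform. Your fallback, the local H\"older bootstrap giving $\sigma\in C^{1,\beta}_{\mathrm{loc}}$, is a harmless extra that the paper does not need.
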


\begin{proof}
We use a fixed-point argument rather than the similarity principle,
since the latter represents a solution already in hand and is not by
itself an existence theorem. Seek $w = e^{\sigma}$: the equation
becomes $\bar\partial\sigma = B\,e^{\bar\sigma - \sigma}$. Define
\[
\mathcal T(\sigma)
\;:=\; T_\D\bigl(B\,e^{\bar\sigma - \sigma}\bigr)
\qquad\text{on } C^0(\overline\D).
\]
The integrand has modulus $|B| \leq \|B\|_\infty$ pointwise,
independently of $\sigma$, and $T_\D$ carries bounded sets of
$L^\infty$ into bounded sets of $C^{0,\beta}(\overline\D)$
\cite[Ch.~I, \S\S 5--9]{vekua}. Hence there is $R>0$ such that
$\|\mathcal T(\sigma)\|_{C^0} \leq R$ for every $\sigma$, and the
image of $\mathcal T$ is relatively compact in
$C^0(\overline\D)$. Thus $\mathcal T$ is continuous and compact and
maps the closed convex ball
$\{\sigma : \|\sigma\|_{C^0} \leq R\}$ into itself. Schauder's fixed
point theorem produces $\sigma$ with
$\bar\partial\sigma = B e^{\bar\sigma - \sigma}$ in
$\mathcal{D}'(\D)$. The fixed point lies in
$W^{1,p}_{\mathrm{loc}}(\D)$ for every $p < \infty$ --- the
derivative of the Pompeiu integral of a bounded function is its
Calder\'on--Zygmund transform --- so the chain rule
$\bar\partial e^{\sigma} = e^{\sigma}\,\bar\partial\sigma$ is valid,
and $w = e^{\sigma}$ solves the minimal form, zero-free.
\end{proof}

\begin{remark}[Nontriviality]\label{rem:nontrivial}
The hypothesis $\Sol(B_2) \neq \{0\}$ of Theorem~\ref{thm:solutions}
is stated where it is used: part (c) presumes a solution in hand. It
holds on every relatively compact subdomain, where the solution space
is infinite-dimensional \cite{bers, vekua}, and globally on $\D$
whenever $B$ is bounded (Proposition~\ref{prop:zero-free}). Whether
the bare class of Definition~\ref{def:class} always admits a global
nontrivial solution is a question at the boundary, of the same species
as the trace problems of \cite[\S 7]{charge}, and we do not pursue it;
Proposition~\ref{prop:sheaf} and Corollary~\ref{cor:sheaf-classify}
circumvent it by localizing, and nothing in the classification waits
on its answer.
\end{remark}

\section{Discussion}\label{sec:discussion}

\emph{The physical reading.} At the minimal slice the moduli admit a
gauge-theoretic dictionary --- a translation of objects, not a claim
of physics: no action, no dynamics, only the transformation laws.
The density $\vartheta$ plays the role of an energy profile in
hyperbolic clothing, of which $\Mcal$ retains the total; the 1-form
$A := \tfrac12\eta$ transforms, under the residual gauges and by
Lemma~\ref{lem:covariance}, exactly as a $U(1)$ connection form
under gauge transformations with harmonic phase, and we read it as
one: flat off the vortices, with field strength on the tame sector
the purely atomic $dA = \pi\sum_p n_p\,\delta_p$
(Remark~\ref{rem:ab}) --- Aharonov--Bohm fluxes in units of $\pi$,
the coefficient field carrying gauge charge two, threading the
vortices with no distributed field: the charge. The question is
where $\kappa$ lives, since it is not the curvature of $A$: the
answer is that the gauge freedom the class grants the connection is
smaller than the dictionary's. Gauge theory identifies connections
differing by $d\chi$ for arbitrary $\chi$; the residual gauges act
on $A$ by exactly the shifts $d\chi$ with $\chi$ harmonic
(Proposition~\ref{prop:weight-gauge}), and the quotient therefore
remembers the co-derivative $d{\star}A = \tfrac12\kappa\,\hyparea$,
which an arbitrary $\chi$ would eliminate --- the Coulomb gauge ---
and a harmonic $\chi$ cannot touch. The phase curvature is a
Coulomb-gauge residue promoted to an invariant: by
Corollary~\ref{cor:positive} it is exactly the obstruction to
trivializing the connection by an admissible gauge, and
Example~\ref{ex:phase} is two equations agreeing in everything the
dictionary expresses --- identical profile $\vartheta$, identical
(vanishing) field strength $dA$ --- distinguished by the
co-curvature alone, an invariant for which the dictionary has no
word. The moduli of the equation are strictly finer than its
gauge-theoretic translation.

\emph{Why there is no Bogomolny bound, again.} The absence of an
energy--vortex inequality was traced in \cite[Rem.~6.3]{charge} to the
mass containing no derivative of the field; the moduli make the
diagnosis structural. The Ginzburg--Landau energy reads the phase form
--- the kinetic term $|\nabla u|^2$ contains $|\eta|^2$ --- and the
Bogomolny rearrangement converts exactly that term into flux
\cite[Rem.~6.3]{charge}. The pseudo-analytic mass reads $\vartheta$
alone, and the decisive freedom is a scaling: for $\lambda > 0$ the
minimal field $\lambda B$ has mass $\lambda^2 \Mcal$, while its phase
form --- $\operatorname{Im}\bigl(d(\lambda B)/(\lambda B)\bigr) = \eta$,
blind to $\lambda$ --- and with it the vortex set and every local
charge, is untouched. At fixed nonzero charge the mass is therefore
arbitrarily small, and arbitrarily large, and no bound
$\Mcal \geq c(n) > 0$ can hold. Note that this needs no independence
of the fields, which indeed fails across the vortices ---
$\vartheta$ reads the vortex positions and the absolute orders
$|n_p|$ (Example~\ref{ex:zbar}), and only the vortex-free pair is
fully unconstrained (Proposition~\ref{prop:realization}); one ray in
the moduli already forbids the inequality.

\emph{Multiply connected domains.} Three ingredients change, each
visibly. The uniformization lands on a circular domain rather than
$\D$, and the residual conformal group collapses: for an annulus,
rotations together with the boundary-swapping inversion --- the
everting map of \cite[Rem.~3.6]{charge} --- and for connectivity three
or more, a finite group. The gauge phases acquire periods: a zero-free
holomorphic $\varphi$ on a domain with $m$ holes has
$\oint_{\gamma_j} d(\arg\varphi) \in 2\pi\Z$, so the admissible phase
shifts $-2\arg\varphi$ are harmonic functions with periods in the even
lattice $4\pi\Z$, and the periods of the phase form ---
$\oint_{\gamma_j}\eta = 2\pi n_j$, the component charges --- are
invariants exactly in $\Z/2\Z$, with the total charge exact since a
zero-free $\varphi$ has total boundary winding zero. This is the
exact/mod-$2$ dichotomy of \cite[Rem.~3.6]{charge}, with the lattice
that causes it made visible. And the completeness argument loses its
last step: a closed and co-closed 1-form is no longer exact, the
harmonic 1-forms contributing an $m$-dimensional space, so the triple
must be supplemented by the periods of $\eta$ modulo the gauge
lattice. The expected statement --- complete data
$(\vartheta,\, d\eta,\, d{\star}\eta,\, \text{periods mod } 4\pi\Z)$
modulo the finite conformal group --- we leave to future work.

\emph{Measurable regularity.} The descent to Bojarski's class
\cite{bojarski}, where
\cite{framed} lives, attacks the data and not the symmetries. The
residual groupoid is unchanged: a $1$-quasiconformal homeomorphism is
conformal and a $W^{1,2}_{\mathrm{loc}}$ gauge with
$\varphi_{\bar z} = 0$ almost everywhere is holomorphic, by the two
Weyl lemmas \cite{aim}, so measurable minimal forms are still
classified modulo holomorphic gauges and $\Mob$. The modulus half then
descends intact: with lower-order data in $L^p_{\mathrm{loc}}$,
$p > 2$ --- the classical threshold below which the similarity gauge
$\varphi = e^s$ need not stay bounded --- the minimal reduction runs
at measurable $\mu$, and $\vartheta$, its spectrum
(Proposition~\ref{prop:spectrum}), and the mass identity of
Proposition~\ref{prop:mass-moment} survive almost everywhere. The
phase half is the open half. At bare $L^2_{\mathrm{loc}}$ the field
has no derivative, the phase form does not exist, and the situation is
that of \cite[\S 7]{charge}, whose conjecture on the quasiconformal
invariance of the winding is the group-level shadow of the missing
phase theory. Phase-integrability is plainly the natural intermediate
class --- Theorem~\ref{thm:tame-complete} uses smoothness nowhere
essential, and its Weyl step already runs at $L^1_{\mathrm{loc}}$ ---
but the comparison quotient of its proof divides by $|B|$, and the
sharp statement for $W^{1,1}_{\mathrm{loc}}$ fields with unbounded
zero-set geometry we leave open.

\section{Locality} The admissible morphisms split by their definability. The
recombinations of the unknown and the scalings are \emph{pointwise}:
the group element at a point acts through finitely many numbers at
that point, and a quotient by a pointwise group is algebra done
fiberwise --- which is why the absorption theorem of \cite{absorption} and
the minimal-form algebra of Section~\ref{sec:reduction} are formulas. The
residual symmetries are not pointwise: a holomorphic gauge and a
conformal chart are defined by a differential equation --- they are
kernels of $\bar\partial$ --- and a quotient by a pseudogroup cut out
by $\bar\partial$ is taken by inverting $\bar\partial$. The reduction
pays accordingly, in exactly the two places the H\"older exponent is
spent: the $\Acal$-gauge is a $\bar\partial$-primitive, the
uniformizing chart a Beltrami solve composed with a Riemann map. The invariants are precisely as
non-local as the automorphisms that define them.

The non-locality, moreover, concentrates in a single object, and the
concentration is a provable statement rather than an impression. The
mass $2$-form $\Theta$ is a pointwise expression in the coefficients
and one derivative of the frame \cite{framed}; the conformal
structure is carried pointwise by $\mu$; and the phase currents,
though defined through the minimal field --- the output of the
$\bar\partial$-gauge of Step 3 --- do not in fact require that solve:

\begin{proposition}[The phase currents are computable before the gauge]\label{prop:gauge-free}
Let $w_{\bar z} + \Acal w + \Bcal\bar w = 0$ on $\D$ be a slice
equation as in Step 2 of Theorem~\ref{thm:reduction},
$\Acal, \Bcal \in C^{\alpha}_{\mathrm{loc}}(\D)$, and let
$B = -\Bcal\,e^{\bar s - s}$ be the minimal field produced by a
primitive $\bar\partial s = -\Acal$ (Step 3, Remark~\ref{rem:sign}).
Then:
\begin{itemize}
\item[(i)] $|B| = |\Bcal|$: the modulus data --- $\vartheta$, its
moments, the mass form $\Theta$ --- read $\Bcal$ alone;
\item[(ii)] if $\Bcal \in C^1(\D \setminus Z(\Bcal))$ --- the
regularity supplied as in Definition~\ref{def:phase-form} --- then
$Z(B) = Z(\Bcal)$, and $\eta_B \in L^1_{\mathrm{loc}}(\D)$ if and
only if $\eta_{\Bcal} \in L^1_{\mathrm{loc}}(\D)$; when the common
zero set moreover satisfies the standing hypotheses of
Definition~\ref{def:phase-form} --- closed, Lebesgue-null, connected
complement --- this is phase-integrability of either field. The
phase forms differ by a continuous exact form,
\[
\eta_B \;=\; \eta_{\Bcal} \;-\; 2\,d\operatorname{Im}s,
\qquad
\eta_{\Bcal} := \operatorname{Im}\frac{d\Bcal}{\Bcal}
\]
(the sign of Remark~\ref{rem:sign} immaterial,
$d(-\Bcal)/(-\Bcal) = d\Bcal/\Bcal$), and the two currents obey
\[
d\eta_B \;=\; d\eta_{\Bcal},
\qquad
d{\star}\eta_B \;=\; d{\star}\eta_{\Bcal}
\;+\; 8\operatorname{Im}(\partial\Acal)\,dx\,dy,
\]
the last term read distributionally,
$\langle \operatorname{Im}\partial\Acal, \psi\rangle
:= -\int_\D \operatorname{Im}(\Acal\,\partial\psi)\,dx\,dy$.
\end{itemize}
In particular both currents --- the charge current and the curvature
current --- are independent of the choice of primitive $s$ and
computable from $(\Acal, \Bcal)$ by local formulas, with no
$\bar\partial$-solve performed.
\end{proposition}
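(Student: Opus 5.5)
The plan is to read everything off the single identity $B = -\Bcal\,e^{\bar s - s}$ and the observation that $\bar s - s = -2i\operatorname{Im}s$ is purely imaginary. The factor $e^{\bar s - s}=e^{-2i\operatorname{Im} s}$ is therefore unimodular and zero-free. Moreover $s\in C^{1,\alpha}_{\mathrm{loc}}(\D)$ by Step~3 of Theorem~\ref{thm:reduction}, so the factor is $C^{1,\alpha}_{\mathrm{loc}}$.

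Part (i) is then immediate: $|B|=|\Bcal|$ pointwise. Hence $\vartheta=\tfrac14(1-|z|^2)^2|\Bcal|^2$, and $\Theta$ together with every $I_\Phi$ of Proposition~\ref{prop:spectrum} reads $\Bcal$ alone.

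For part (ii), I first note that the unimodular factor gives $Z(B)=Z(\Bcal)$. Since $s\in C^1$, the field $B$ is $C^1$ off the zero set exactly when $\Bcal$ is. On $\D\setminus Z$ I differentiate logarithmically:
\[
\frac{dB}{B}=\frac{d\Bcal}{\Bcal}+d\bar s-ds ,
\]
with the sign $-1$ dropping out. Taking imaginary parts gives $\eta_B=\eta_{\Bcal}-2\,d\operatorname{Im}s$. The correction $d\operatorname{Im}s$ is a continuous, globally defined exact form on all of $\D$, so it is locally integrable. Hence $\eta_B\in L^1_{\mathrm{loc}}$ if and only if $\eta_{\Bcal}\in L^1_{\mathrm{loc}}$. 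Under the standing hypotheses on the common zero set, this is the equivalence of phase-integrability.

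For the currents I use linearity of the pairings of Definition~\ref{def:phase-form}, which reduces the claim to computing the two currents of $du$ with $u:=\operatorname{Im}s\in C^1(\D)$.
\begin{itemize}
\item[(1)] For the exterior derivative, $\langle d(du),\psi\rangle=-\int d\psi\wedge du=-\int d(\psi\,du)=0$ by Stokes against compact support. This is legitimate at $C^1$ regularity after mollifying $u$ and passing to the limit.
\item[(2)] For the co-derivative, $\langle d{\star}du,\psi\rangle=-\int\nabla u\cdot\nabla\psi=\int u\,\Delta\psi$, so it is the distributional Laplacian of $u$.
\end{itemize}
Writing $\Delta\psi=4\,\bar\partial\partial\psi$ and integrating by parts once, I use $\bar\partial s=-\Acal$ to obtain
\[
\int s\,\Delta\psi=-4\int \bar\partial s\,\partial\psi=4\int\Acal\,\partial\psi .
\]
Taking imaginary parts for real $\psi$ gives $\langle\Delta u,\psi\rangle=4\int\operatorname{Im}(\Acal\,\partial\psi)=-4\langle\operatorname{Im}\partial\Acal,\psi\rangle$, with the stated distributional reading. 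Multiplying by $-2$ then yields $d{\star}\eta_B=d{\star}\eta_{\Bcal}+8\operatorname{Im}(\partial\Acal)\,dx\,dy$, and $d\eta_B=d\eta_{\Bcal}$ follows from (1).

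The right-hand sides involve only $(\Acal,\Bcal)$, and no $s$. This gives independence of the primitive and the local computability claim. It is consistent with Lemma~\ref{lem:covariance}(i): two primitives differ by a holomorphic function, whose imaginary part is harmonic.

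The only delicate step is the bookkeeping of signs and factors in the integration by parts for the co-derivative current. One must check that $\Delta=4\partial\bar\partial$ and $\star$ are the paper's conventions, and that only one derivative falls on $s$, since $s$ is merely $C^{1,\alpha}$. Everything else is pointwise algebra.
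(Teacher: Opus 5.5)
Your proposal is correct and follows the paper's proof essentially step for step. You use unimodularity of $e^{\bar s - s}$ for (i), logarithmic differentiation of $B = -\Bcal\,e^{\bar s-s}$ for the phase-form relation, and the distributional identity $\Delta\operatorname{Im}s = -4\operatorname{Im}(\partial\Acal)$ for the currents. Your single integration by parts against $\Delta\psi = 4\,\bar\partial\partial\psi$ is just the paper's $\Delta s = 4\,\partial\bar\partial s = -4\,\partial\Acal$ in $\mathcal{D}'(\D)$ written out, with the signs and factors checking out to $8\operatorname{Im}(\partial\Acal)\,dx\,dy$.
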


\begin{proof}
(i) is $|e^{\bar s - s}| = 1$. For (ii):
$s \in C^{1,\alpha}_{\mathrm{loc}}$ (Step 3), so
$d\operatorname{Im}s$ is a continuous 1-form, and taking imaginary
parts in $dB/B = d\Bcal/\Bcal + d\bar s - ds$ gives the displayed
relation, $\operatorname{Im}(d\bar s - ds) = -2\,d\operatorname{Im}s$;
local integrability passes both ways across the continuous
difference. The first law is $d(d\operatorname{Im}s) = 0$, valid
distributionally for the $C^1$ function $\operatorname{Im}s$. For the
second, $d{\star}d\operatorname{Im}s = \Delta(\operatorname{Im}s)\,dx\,dy$
as currents, and
$\Delta s = 4\,\partial\bar\partial s = -4\,\partial\Acal$ in
$\mathcal{D}'(\D)$, so
$\Delta\operatorname{Im}s = -4\operatorname{Im}(\partial\Acal)$ and
$-2\,d{\star}d\operatorname{Im}s = 8\operatorname{Im}(\partial\Acal)\,dx\,dy$.
Independence of the primitive is read off the formulas, which do not
contain $s$; directly, two primitives differ by a holomorphic $g$,
whose contribution $-2\,d\operatorname{Im}g$ is a smooth exact form
with harmonic potential, killed by both $d$ and $d{\star}$ as in
Lemma~\ref{lem:covariance}.
\end{proof}

What is not local is the hyperbolic reference $\hyparea$: the Poincar\'e density strictly
decreases under enlargement of the domain, so its value at a point
sees the entire boundary, and no expression in finitely many
derivatives of the coefficients at the point reproduces it. The
moduli are ratios of local currents against this one non-local
reference,
\[
\vartheta \;=\; \frac{\Theta}{\hyparea},
\qquad
\kappa \;=\; \frac{d{\star}\eta}{\hyparea},
\]
the numerators formulas --- Proposition~\ref{prop:gauge-free} ---
the denominator a solve.

This division of labor selects the two invariants that began the
program. In the zeroth moment the reference cancels,
\[
\Mcal \;=\; \int_\D \vartheta\,\hyparea \;=\; \int_\Omega \Theta,
\]
a local functional, computable before any uniformization
(Proposition~\ref{prop:mass-moment}); and, whenever the numerical
charge is defined, it is topological --- a winding, blind to every
metric, likewise computable pre-uniformization \cite{charge}. Among
the numerical invariants considered in this paper, mass and charge are
the two that eliminate the hyperbolic reference: every
higher moment $I_\Phi$, every M\"obius-invariant functional of the
shape of $\vartheta$, and the whole of $\kappa$ retain the reference
and inherit its non-locality. The incompleteness of
Section~\ref{sec:incompleteness} thus acquires a second reading:
\emph{the locally computable invariants of a sourceless framed
equation are incomplete, and completeness costs exactly one non-local
uniformization problem: the construction of the conformal disk chart
--- here a Beltrami chart followed by a Riemann map --- which
produces the hyperbolic reference}; the $\bar\partial$-gauge, by
Proposition~\ref{prop:gauge-free}, need never be performed.

This suggests a Cartan-type local normalization --- scalar invariants
and their functional relations --- generically and for the
\emph{modulus half} of the data alone: on
$\{\Theta \neq 0\}$ the conformal class of $\mu$ contains a unique
metric with area form $\Theta$, and its curvature is a pointwise
invariant --- of $\vartheta$ and its derivatives, the phase escaping
the construction entirely. The construction moreover degenerates
exactly where the theory's content lives --- on the zero set of the
numerator field, which carries the charge, and at the massless
equation, which carries the entire residual symmetry --- while the
hyperbolic normalization is uniform across these strata. The
reference is the price of the uniformity; the precedent is classical:
curvature is local, the uniformizing map and Riemann's moduli are
not.

\emph{Open problems.} Five, in increasing distance from the plane.
\emph{(1)} The multiply connected moduli, as sketched above.
\emph{(2)} The local moduli at the frontier: classify the
non-phase-integrable vortex germs, of which the oscillation example is
the first specimen; Remark~\ref{rem:frontier} shows the current-level
data end exactly there. \emph{(3)} Disconnecting zero sets: when
$Z(B)$ is null but separates the domain, the sufficiency argument of
Theorem~\ref{thm:tame-complete} leaves one phase constant per
component, and equations agreeing in the whole triple yet differing by
relative phases across a separating zero set appear possible; whether
these constants are genuine additional moduli is open. \emph{(4)} The
boundary: the charge paper left open the interaction of $n$ with the
index of Vekua's Riemann--Hilbert problem \cite{vekua, charge}; the
question now refines to fields --- how do the solvability dimensions
of boundary value problems read $(\vartheta, \kappa, C)$, and in
particular the local data at a vortex of prescribed atom?
\emph{(5)} Out of the plane: in the quaternionic extension of the
program the numerator field becomes higher-dimensional and the winding
a mapping degree \cite{charge}; the present paper sharpens the
question, since the split of the phase derivative into exterior and
co-exterior currents used the conformal invariance of the Hodge star in
the middle degree --- available again, in dimension four, on 2-forms.
On the tame planar sector the former is atomic and the latter has no
atomic part; beyond it their singular structures are part of the
problem. Whether the two-current structure, and
with it a completeness theorem, survives the loss of commutativity is
the precise question.

\section{Coda: from pseudo-analytic equations to pseudo-analytic theories}\label{sec:coda}

The program of \cite{mass, framed, charge} reads the
coefficients of an equation as coordinates on a space of equations and
asks for the geometry of that space under its own symmetries. The
classification just computed can be stated naturally at the level of
the function theories themselves.

\begin{definition}[Pseudo-analytic theory]\label{def:theory}
A \emph{pseudo-analytic theory} is a minimal pair $(\D, B)$ --- the
unit disk with a minimal form per Theorem~\ref{thm:reduction} ---
identified with the function theory it determines in Bers' sense: the
sheaf of local solutions, with the geometric morphisms of
Section~\ref{sec:solutions}. Every sourceless framed equation
$(\Omega, \Ecal)$ per Definition~\ref{def:class} determines a theory,
its minimal reduction, uniquely up to the residual groupoid
(Remark~\ref{rem:well-defined}). Two theories are \emph{isomorphic}
if their minimal forms are equivalent --- by
Corollary~\ref{cor:sheaf-classify}, equivalently, if their solution
sheaves admit a multiplier--composition isomorphism; on the sector of
global nontriviality, equivalently again, if their global solution
spaces do (Theorem~\ref{thm:solutions}).
\end{definition}

The restriction to the minimal slice in this definition is not a
loss: the solution spaces of a general framed equation are related to
those of its minimal form by the real-linear recombinations
$w = \varphi w' + \psi \bar w'$ of the reduction, and it is on the
minimal slice, where the stabilizer forces $\psi = 0$
(Proposition~\ref{prop:residual}), that the morphisms take the
multiplier--composition shape the classification uses.

In this noun the paper reads as four statements about theories.
\emph{One:} every sourceless framed equation on a bounded simply
connected domain determines a theory $(\D, B)$ --- the reduction
consumes the domain, Riemann's theorem acting as the rigidity of the
first component over simply connected bases, and one field remains,
unique up to the residual groupoid
(Theorem~\ref{thm:reduction}, Remark~\ref{rem:well-defined}). \emph{Two:} mass and charge are
functions on the space of theories, not parameters within any one
theory; a family such as the constant-coefficient equations is a
curve in the space, transverse to the level sets of the mass.
\emph{Three:} the theories are classified, on the phase-integrable
sector of Definition~\ref{def:phase-form}, by the triple
$(\vartheta,\, d\eta,\, d{\star}\eta)$ on the hyperbolic
disk modulo its isometries
(Theorems~\ref{thm:completeness}, \ref{thm:tame-complete}); on the
vortex-free sector the classifying pair is moreover unconstrained and
every H\"older pair with positive first component occurs
(Proposition~\ref{prop:realization}), while across
the vortices the triple is constrained --- $\vartheta$ already reads
the positions and orders of the vortices, and the charge current must
be compatible with them (Example~\ref{ex:zbar}) --- and the two
invariants that began the program, the mass of \cite{mass, framed}
and the charge of \cite{charge}, are its two numerical projections:
the total of the first field and, whenever the numerical charge is
defined, the normalized total flux of the charge current. \emph{Four:} each theory is
recoverable from any single one of its nonzero functions where one
exists globally (Theorem~\ref{thm:solutions};
Remark~\ref{rem:nontrivial} records when this is guaranteed), and
unconditionally from its local functions
(Proposition~\ref{prop:sheaf}, Corollary~\ref{cor:sheaf-classify}),
so the classification of sourceless
equations is the classification, one theory per equation, of the
pseudo-analytic function theories on the disk under geometric
isomorphism. Two numbers, cast by the moduli; the
moduli are fields.

\subsection*{Use of Generative AI Tools}
\medskip

The author discloses the use of Anthropic's Claude (Claude Fable 5,
accessed through the Claude.ai mobile interface in July 2026) in the
preparation of this manuscript. The tool was used as follows:

\begin{enumerate}
\item[(i)] \emph{Exploratory dialogue.} The organization of the paper
--- the minimal reduction and its residual groupoid, the splitting of
the orbit relation through the holomorphic square root of the
conformal weight, the pair $(\vartheta, \kappa)$ and the completeness
theorems, and the two-current structure of the phase at the vortices
--- emerged in iterative research sessions, building on the companion
papers \cite{mass, framed, charge}.

\item[(ii)] \emph{Symbolic and numeric verification.} The moment
integrals of Section~\ref{sec:incompleteness}, the equivariance
identities of Section~\ref{sec:pair}, and the vortex-current pairings
of Section~\ref{sec:vortices} were verified in exact rational and
floating-point computer algebra, with scripts produced with the
tool's assistance and re-run, and further verified and analyzed by
the author. In particular, the location of the charge atom in the
exterior rather than the co-exterior derivative of the phase form was
settled by these computations, correcting an earlier working
formulation; the displayed proofs, in particular
Theorem~\ref{thm:atoms}, are independent of these verifications.

\item[(iii)] \emph{Drafting and revision of prose.} The manuscript
was drafted in iterative dialogue; all claims and their precise
wording were reviewed by the author.
\end{enumerate}

The author takes full responsibility for the correctness, accuracy,
originality, and integrity of all content.

\subsection*{Disclosure of interest}

The author reports there are no competing interests to declare.

\end{document}